\title{The critical height is a moduli height}
\author{Patrick Ingram}
\email{pingram@yorku.ca}
\address{York University, 4700 Keele St., Toronto, Canada}
\date{\today}
\newcommand{\PP}{\mathbb{P}}
\newcommand{\ZZ}{\mathbb{Z}}
\newcommand{\CC}{\mathbb{C}}
\newcommand{\QQ}{\mathbb{Q}}
\newcommand{\RR}{\mathbb{R}}
\renewcommand{\AA}{\mathbb{A}}
\renewcommand{\epsilon}{\varepsilon}
\newcommand{\Rat}{\operatorname{Rat}}
\newcommand{\Res}{\operatorname{Res}}
\newcommand{\Hom}{\operatorname{Hom}}
\newcommand{\M}{\mathsf{M}}
\newcommand{\PGL}{\operatorname{PGL}}
\newcommand{\ord}{\operatorname{ord}}
\newcommand{\hcrit}{\hat{h}_{\mathrm{crit}}}
\newtheorem{theorem}{Theorem}
\newtheorem{lemma}[theorem]{Lemma}
\newtheorem{corollary}[theorem]{Corollary}
\theoremstyle{definition}
\newtheorem{remark}[theorem]{Remark}
\begin{document}
\begin{abstract}
Silverman defined the critical height of a rational function $f(z)$ of degree $d\geq 2$ in terms of the asymptotic rate of growth of the Weil height along the critical orbits of $f$, and conjectured that this quantity was commensurate to an ample Weil height on the moduli space of rational functions degree $d$. We prove this conjecture. 
\end{abstract}
\maketitle

If $f(z)\in\overline{\QQ}(z)$ is a rational function of degree $d\geq 2$, the canonical height $\hat{h}_f$ associated to $f$ is uniquely determined by the first two of its three fundamental properties~\cite[p.~99]{ads}:
\begin{enumerate}
\item[A.] $\hat{h}_f(f(P))=d\hat{h}(f)$,
\item[B.] $\hat{h}_f(P)=h_{\PP^1}(P)+O_f(1)$, with $h_{\PP^1}$ the usual Weil height, and
\item[C.] $\hat{h}_f(P)=0$ if and only if $P$ is preperiodic.
\end{enumerate}
These properties make the canonical height the natural choice of measure of arithmetic complexity on $\PP^1$, relative to $f$, and indeed it has been ubiquitous in the study of arithmetic dynamics.

Moving from dynamical space to the moduli space $\M_d$ of all rational functions of degree $d$, Silverman proposed a natural measure of the dynamical complexity of a rational function based on its critical orbits. In particular, the \emph{critical height} is defined by
\[\hat{h}_{\mathrm{crit}}(f)=\sum_{P\in \operatorname{Crit}(f)}\hat{h}_f(P),\]
where the sum is over critical points of $f$ with multiplicity,
a definition which is independent of choice of coordinates. The orbits of critical points carry a great deal of information about a dynamical system, and so this is a natural candidate for a ``canonical height'' on moduli space, and indeed it enjoys two properties similar to properties above:
\begin{enumerate}
\item[A$'$.] $\hat{h}_{\mathrm{crit}}(f^n)=n\hat{h}_{\mathrm{crit}}(f)$,
\item[C$'$.] $\hat{h}_{\mathrm{crit}}(f)=0$ if and only if $f$ is post-critically finite (PCF), that is, if and only if every critical orbit of $f$ is finite.
\end{enumerate}
One might hope that $\hat{h}_\mathrm{crit}$ turns out to be an ample Weil height on $\M_d$, in analogy with the second property of $\hat{h}_f$, but the algebraic geometry of $\M_d$ is more complicated than that of $\PP^1$, and besides the flexible Latt\`{e}s examples~\cite[\S6.5]{ads} show that this is just not true (these have critical height zero, but unbounded moduli height). Indeed, $\hat{h}_f$ is constructed directly from $h_{\PP^1}$, making property B above fairly unsurprising, while $\hat{h}_{\mathrm{crit}}$ is defined pointwise in terms of the dynamics of each map, with no mention of, or obvious relation to, the arithmetic geometry of $\M_d$. Nonetheless, Silverman conjectured~\cite[Conjecture~6.29, p.~101]{barbados} that $\hat{h}_{\mathrm{crit}}$ is commensurate to any ample Weil height on $\M_d$, away from the Latt\`{e}s examples, and our main result confirms this conjecture.

\begin{theorem}\label{th:main}
For any ample Weil height $h_{\M_d}$ on $\M_d$,
\[\hcrit\asymp h_{M_d}\]
except at the Latt\`{e}s examples.
In other words there exist positive constants $c_1, c_2, c_3$, and $c_4$ such that
\[c_1h_{\M_d}(f)-c_2\leq \hcrit(f)\leq c_3h_{\M_d}(f)+c_4\]
for all non-Latt\`{e}s $f\in \M_d$.
\end{theorem}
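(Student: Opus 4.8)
The plan is to realize $\hcrit$, up to a bounded error, as the height attached to a nef and big $\RR$-divisor class $\mathcal L_{\mathrm{crit}}$ on a projective compactification $\overline{\M}_d$ of $\M_d$, and then to show that its augmented base locus meets $\M_d$ only along the Latt\`es locus. The upper bound $\hcrit(f)\le c_3h_{\M_d}(f)+c_4$ is the softer half and I would settle it first, essentially by hand. Telescoping the limit defining $\hat h_f$ gives $|\hat h_f(P)-h_{\PP^1}(P)|\le C\bigl(h(\tilde f)+1\bigr)$ for any homogeneous lift $\tilde f$ of $f$, with $C$ depending only on $d$ (the lower bound here needs only that the resultant of $\tilde f$ is a fixed polynomial in its coefficients). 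The critical points of $f$ are the zeros of the Wronskian form of $\tilde f$, a form of degree $2d-2$ whose coefficients are themselves fixed polynomials in those of $\tilde f$, so every $P\in\operatorname{Crit}(f)$ satisfies $h_{\PP^1}(P)\le C'\bigl(h(\tilde f)+1\bigr)$. Taking $\tilde f$ of nearly minimal height in its $\PGL_2$-orbit and using the standard comparison $h_{\M_d}([f])\asymp\inf_{g\in\PGL_2}h(f^g)$ on the semistable locus then yields $\hcrit(f)\le(2d-2)\max_P\bigl(h_{\PP^1}(P)+C(h(\tilde f)+1)\bigr)\ll h_{\M_d}([f])+1$.

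For the lower bound I would construct $\mathcal L_{\mathrm{crit}}$ as follows. Pass to the critically marked moduli space $\M_d^{\mathrm{cm}}$, a finite cover of $\M_d$ over which the $2d-2$ critical points become sections $\sigma_1,\dots,\sigma_{2d-2}$ of the universal $\PP^1$-bundle. The function $[f]\mapsto\hat h_f(\sigma_i(f))$ is then, by the Tate/Call--Silverman canonical-height-in-families construction applied fibrewise and extended across a compactification, the height relative to a nef $\RR$-divisor class $\mathcal L_i$ carrying at each place the fibrewise Green's function metric; its nefness is visible from the presentation $\mathcal L_i=\lim_n d^{-n}\bigl[\,f\mapsto f^n(\sigma_i(f))\,\bigr]$ as a limit of semiample classes. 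Summing over $i$ and descending to $\overline{\M}_d$ by averaging over the cover produces $\mathcal L_{\mathrm{crit}}$ with $\hcrit=h_{\mathcal L_{\mathrm{crit}}}+O(1)$. Bigness of $\mathcal L_{\mathrm{crit}}$ is the assertion that the bifurcation measure $\mu_{\mathrm{bif}}=\bigl(\sum_i T_i\bigr)^{\dim\M_d}$, whose local potentials are exactly the Green's functions above, has positive total mass, which holds because post-critically finite maps lie in its support and are Zariski dense in $\M_d$. Now Nakamaye's theorem identifies the augmented base locus of the nef and big class $\mathcal L_{\mathrm{crit}}$ with its null locus $\bigcup\{V:\mathcal L_{\mathrm{crit}}^{\dim V}\cdot V=0\}$, and the general fact that a nef and big class dominates any ample height off its augmented base locus reduces the inequality $\hcrit(f)\ge c_1h_{\M_d}(f)-c_2$ for non-Latt\`es $f$ to the claim that this null locus meets $\M_d$ only in the Latt\`es locus (with a separate elementary local-height estimate on the boundary of the Latt\`es locus inside $\M_d$, should that be nonempty).

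It remains to locate the null locus, and this is where the genuine difficulty lies. Near the boundary $\mathcal L_{\mathrm{crit}}$ is ample: if $[f_k]$ leaves every compact subset of $\M_d(\CC_v)$ for some place $v$, then $f_k$ degenerates to bad reduction at $v$ and the nonnegative contribution of the critical points to the place-$v$ Green's function grows at least linearly in the depth of the degeneration — quantified using DeMarco's description of the critical escape rate as a continuous function on the Berkovich moduli space, compatible with the GIT compactification — so that $\hcrit(f_k)\gg h_{\M_d}([f_k])$; hence the null locus is contained in the interior. For an interior subvariety $V\subseteq\M_d$ with $\mathcal L_{\mathrm{crit}}^{\dim V}\cdot V=0$, this says exactly that the bifurcation measure of the restricted family over $V$ is identically zero, and I would aim to deduce that $V$ is contained in the Latt\`es locus. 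The route: degeneracy of the restricted bifurcation currents forces the family over a Zariski-open part of $V$ to carry a holomorphic foliation along which no critical point is active; by the rigidity classification of stable algebraic families of rational maps (McMullen, after Ma\~n\'e--Sad--Sullivan) the leaves are isotrivial or flexible Latt\`es families, and since isotrivial leaves collapse to points of $\M_d$ this forces the leaves — hence, by a dimension count against the low-dimensional Latt\`es locus, all of $V$ — to consist of flexible Latt\`es maps. (An arithmetic equidistribution argument \`a la Yuan, fed a Zariski-dense sequence of small-$\hcrit$ points of $V$, gives an alternative approach.) I expect this final step — extracting the dynamical conclusion that $V$ is a Latt\`es family from the merely numerical vanishing of a top self-intersection of a class that has only continuous local potentials — to be the main obstacle, with the uniform quantitative form of the boundary escape-rate estimate a close second.
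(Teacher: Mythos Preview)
Your plan is quite different from the paper's. The paper makes no attempt to realize $\hcrit$ as a height for a divisor class on a compactification and uses no birational geometry or bifurcation-current machinery. Instead, for each place $v$ and each fixed point of nonzero multiplier $\lambda$, the classical fact (Fatou/Koebe over $\CC$, the main result of \cite{bijl} non-archimedeanly) that a sufficiently attracting fixed point draws in a critical orbit is made quantitative: one obtains $g_{f,v}(f_*^kB_f',0)\ge (k-1)\log^+|\lambda^{-1}|_v+(\text{error linear in }\log\|f\|_v)$. Summing over places and over fixed points gives $d^{k+1}\hcrit(f)\ge(k-1)\sum_i h(\lambda_i)-c_1h_{\Hom_d}(f)-c_2k$; applying this to $f^n$, and using Lemma~\ref{lem:silv} to trade $h_{\Hom_d}$ for $h_{\M_d}$, one gets the same inequality with the period-$n$ multiplier spectrum in place of $\sum h(\lambda_i)$. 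McMullen's theorem enters only at this point (Lemma~\ref{lem:sigmapullback}), to say that for some $n$ this spectrum dominates $h_{\M_d}$ off the Latt\`es locus; then choosing $k$ large enough beats the $c_1h_{\M_d}$ error. This is elementary and yields explicit constants in the super-attracting and quadratic cases; your approach, were it to succeed, would be more structural but entirely ineffective.

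The serious gap in your plan is the null-locus step. McMullen's rigidity applies to \emph{stable} families, i.e.\ those with $T_{\mathrm{bif}}|_V\equiv 0$, whereas $\mathcal L_{\mathrm{crit}}^{\dim V}\cdot V=0$ only says that the top Monge--Amp\`ere power $(T_{\mathrm{bif}}|_V)^{\dim V}$ has zero mass, and this is strictly weaker once $\dim V\ge 2$: a nonzero closed positive $(1,1)$-current with continuous potential can certainly have vanishing top power (any pullback along a fibration of relative dimension $\ge 1$ does). Your foliation sketch does not close this: there is no general mechanism producing, from $(T|_V)^{\dim V}=0$ alone, a foliation along whose leaves $T$ vanishes, and even granting such a foliation with isotrivial-or-Latt\`es leaves, the dimension count against the one-dimensional Latt\`es locus constrains the leaves, not $V$. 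In effect you have reduced the theorem to a statement about the null locus of $\mathcal L_{\mathrm{crit}}$ that is of the same depth as the theorem, and McMullen as you invoke it does not unlock it. A secondary worry is the compactification: the classes $d^{-n}(f^n\circ\sigma_i)^*\mathcal O(1)$ are only obviously defined on the open $\M_d^{\mathrm{cm}}$, and extending them across any boundary requires resolving the indeterminacy of each $f^n\circ\sigma_i$; neither the convergence of these extensions in $\mathrm{NS}_\RR$ nor the boundedness of the limiting adelic metric near the boundary follows from what you have written, and the quantitative escape-rate input you propose for this is itself a version of the lower bound you are trying to prove.
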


Note that, since $\hcrit(f)=0$ for any PCF map $f$, Theorem~\ref{th:main} contains as a special case the main result of~\cite{bijl}, namely that the non-Latt\`{e}s PCF locus is a set of bounded height in moduli space. We also note that the upper bound on $\hcrit(f)$ is relatively straightforward from estimates of the difference between the canonical height and the usual Weil height, and is contained  in~\cite[Theorem~6.31, p.~101]{barbados}. The lower bound on $\hcrit(f)$ is the new piece added here.

The proof goes roughly as follows (with estimates made precise in Sections~\ref{sec:greens} and~\ref{sec:global}). It is known, classically over $\CC$ and from~\cite{bijl} over non-archimedean fields, that any sufficiently attracting (but not super-attracting) fixed point linearly attracts a critical orbit. For each valuation $v$ and each fixed point $\gamma$ of multiplier $\lambda\neq 0$, there exists a critical point $\zeta$ for which this linear attaction gives an estimate  of the form
\[\log^+\left|\frac{1}{f^k(\zeta)-\gamma
}\right|_v\geq k\log^+|\lambda^{-1}|_v-C_1(f, k, v),\]
adjusting the constant to make the statement true even when $|\lambda|_v$ is not small, where $\log^+ x=\log\max\{1, x\}$. Summing over all critical points, all fixed points, and all places, we obtain an estimate of the form
\[k \left(h(\lambda_1) + \cdots +h(\lambda_{d+1})\right)\leq \sum_{f'(\zeta)=0} h(f^k(\zeta))+C_2(f, k),\]
where $\lambda_1, ..., \lambda_{d+1}$ are the multipliers at the fixed points (some of which may now vanish), and using an estimate $h(P)=\hat{h}_f(P)+O_f(1)$, we further obtain
\[k \left(h(\lambda_1) + \cdots +h(\lambda_{d+1})\right)\leq d^k\hcrit(f)+C_3(f, k).\]
This can be applied to some iterate to bound $\hcrit(f)$ below by the heights of multipliers of points of period dividng $n$, for any $n$, and this in turn may be bounded below in terms of $h_{\M_d}(f)$ for some $n$ depending just on $d$ (an idea used already in~\cite{bijl}, relying on McMullen's Theorem on stable families~\cite{mcmullen}, and made more precise below). Ultimately we end up with an estimate of the form
\[\epsilon k h_{\M_d}(f)\leq d^k\hcrit(f) + C_4(f, k),\]
for all $k\geq 1$, with $\epsilon>0$ depending just on $d$. Of course, with no information about the error term this estimate could easily be trivial, but it turns out that we may take \[C_4(f, k)=C_5 h_{\M_d}(f)+C_6k,\] with $C_5$ and $C_6$ depending just on $d$. Once $k$ is large enough, the contribution of $h_{\M_d}(f)$ in the lower bound exceeds that in the error term, giving a lower bound on $\hcrit(f)$ of the sort claimed in Theorem~\ref{th:main}.
 
The proof of Theorem~\ref{th:main} gives lower bounds on the critical height based on the height of the multiplier of any given periodic point, which is of interest in the study of certain fibrations of $\M_d$. In particular, the subvarieties $\operatorname{Per}_n(\lambda)\subseteq\M_d$ of rational functions with an $n$-cycle  of multiplier $\lambda$ are well-studied~\cite{demarco, milnor2, milnor}, and of notable interest is the distribution of PCF points on these subvarieties. It follows from the main result of \cite{bijl} that $\operatorname{Per}_n(\lambda)$ contains no PCF points at all once $h(\lambda)$ is large enough (for a fixed $n$) and so for $h(\lambda)$ large the function $\hcrit$ is non-vanishing on $\operatorname{Per}_n(\lambda)$. It is natural to ask whether or not a Bogomolov-type phenomenon occurs, in which $\hcrit$ has a positive infimum on $\operatorname{Per}_n(\lambda)$. Our next result gives a uniform asymptotic in this direction.
\begin{theorem}\label{th:fibration}
For fixed $n\geq 1$ and $d\geq 2$, there exist constants $\epsilon>0$ and $B$ such that 
\[\frac{\hcrit(f)}{h(\lambda)}\geq \epsilon>0\]
for all $f\in \operatorname{Per}_n(\lambda)$, for all $\lambda$ with $h(\lambda)> B$.
\end{theorem}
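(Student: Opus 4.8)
The plan is to deduce Theorem~\ref{th:fibration} directly from Theorem~\ref{th:main}, together with two auxiliary observations: that the multiplier of an $n$-cycle is a function of bounded height on $\M_d$, and that the $n$-cycle multipliers of Latt\`es maps have bounded height.

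First I would record the comparison
\[h(\lambda)\leq C_1h_{\M_d}(f)+C_2\]
for the multiplier $\lambda$ of any $n$-cycle of any $f\in\M_d$, with $C_1$ and $C_2$ depending only on $n$ and $d$. Here $\lambda$ is a root of the monic polynomial $\prod_C\bigl(X-\lambda_C\bigr)$, with $C$ ranging over the $n$-cycles of $f$ and $\lambda_C$ the multiplier of $C$; the coefficients are the elementary symmetric functions of the $\lambda_C$, which are $\PGL_2$-invariant and so descend to regular functions on $\M_d$. Since $\M_d$ is affine, a regular function on it has height bounded above by a constant multiple of any ample Weil height plus a constant, and the standard bound for the height of a root of a monic polynomial in terms of the heights of its coefficients then gives the claim. (Alternatively, one may invoke the finiteness of the forgetful morphism from the space of maps carrying a marked $n$-cycle to $\M_d$.)

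Combining this with Theorem~\ref{th:main}: if $f\in\operatorname{Per}_n(\lambda)$ is not a Latt\`es map, then
\[\hcrit(f)\geq c_1h_{\M_d}(f)-c_2\geq \frac{c_1}{C_1}\bigl(h(\lambda)-C_2\bigr)-c_2,\]
and choosing $B$ large enough the right-hand side is at least $\frac{c_1}{2C_1}h(\lambda)$ whenever $h(\lambda)>B$, whence $\hcrit(f)/h(\lambda)\geq c_1/(2C_1)=:\epsilon>0$.

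It remains to rule out Latt\`es maps. Rigid Latt\`es maps of degree $d$ form a finite set, so their $n$-cycle multipliers have bounded height; for a flexible Latt\`es map, conjugating through the standard degree-two quotient $E\to\PP^1$ shows that the multiplier of an $n$-cycle is, up to sign, a power of $a=\sqrt d$ with exponent at most $2n$, so $h(\lambda)\leq n\log d$. Enlarging $B$ past this bound ensures that no $f\in\operatorname{Per}_n(\lambda)$ with $h(\lambda)>B$ is a Latt\`es map, so the estimate above applies to every such $f$. In effect the only genuine work is in the two auxiliary observations---chiefly the comparison $h(\lambda)\ll h_{\M_d}(f)$, resting on the affineness of $\M_d$---while the inequality that drives the conclusion is just Theorem~\ref{th:main}. (An essentially equivalent route passes to $f^n$, whose fixed points include the $n$ points of the given cycle, and applies the intermediate multiplier estimate from the proof of Theorem~\ref{th:main} together with $\hcrit(f^n)=n\hcrit(f)$.)
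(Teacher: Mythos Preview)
Your argument is correct, and the parenthetical alternative you mention at the end is in fact the route the paper takes. The paper applies the intermediate estimate~\eqref{eq:lambdabound} (i.e., Lemma~\ref{lem:firstbound}) to $f^n$ to obtain
\[(k-1)h(\lambda)\leq d^{nk+1}n\hcrit(f)+c_1h_{\M_d}(f)+c_2k,\]
and then invokes Theorem~\ref{th:main} to absorb the $h_{\M_d}(f)$ term into $\hcrit(f)$, yielding $h(\lambda)\leq c_3\hcrit(f)+c_4$. Your primary route is different: you bound $h(\lambda)$ directly by $h_{\M_d}(f)$ using only that the symmetric functions of the $n$-cycle multipliers are regular on the affine variety $\M_d$, and then apply Theorem~\ref{th:main}. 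This is cleaner and entirely self-contained once Theorem~\ref{th:main} is in hand, but it is a soft comparison with no control on the constants. The paper's route through Lemma~\ref{lem:firstbound} is what makes possible the explicit inequality displayed immediately after the statement of Theorem~\ref{th:fibration}.

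For the Latt\`es case, the paper simply notes that Latt\`es maps are PCF and invokes~\cite{bijl} to conclude that $\operatorname{Per}_n(\lambda)$ contains no PCF maps once $h(\lambda)$ is large; your direct computation of the multipliers works as well and avoids the external reference.
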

It follows from this that if $\hcrit$ takes arbitrarily small values on $\operatorname{Per}_n(\lambda)$, then $h(\lambda)$ is bounded, and the proof gives such a bound. The $m$th multiplier spectrum is the morphism taking $f\in \M_d$ to (the symmetric functions in) the multipliers of the fixed points of $f^m$, and by McMullen's Theorem on stable families~\cite[Corollary~2.3]{mcmullen} there exists an $m$ for which this morphism is finite away from the Latt\`{e}s locus. For this $m$, if $\hcrit$ takes arbitrarily small values on $\operatorname{Per}_1(\lambda)$, we have (from the proof of Theorem~\ref{th:fibration})
\[h(\lambda)\leq \frac{1}{n}\left(1.04d^{2mn}+d^{mn}\log 3-\log 3\right).\]

Although Theorem~\ref{th:main} resolves the conjecture made by Silverman, it is by no means easy to recover explicit values for the claimed constants from the proof, and so it is not obvious how one could effectively list, say, all $f(z)\in\QQ(z)$ of degree 4 and critical height at most 10 (although the theorem certainly says that this is a finite list, up to conjugacy). Note that the more elementary methods employed for polynomials in~\cite{pcfpoly} did offer this level of information, and here there are two special cases in which the constants can be made increasingly concrete.

The first case is that of rational functions with a super-attracting fixed point (generalizing the case of polynomials~\cite{pcfpoly}). 
Note that if we are to obtain any sort of explicit lower bound on $\hcrit(f)$ in terms of the coefficients of $f$, we will need to make some assumptions normalizing the choice of coordinate. Here $h_{\Hom_d}(f)$ is the height of the tuple of coefficients of $f$ as a point in $\PP^{2d+1}$. 
\begin{theorem}\label{th:geom}
For any $d\geq e\geq 2$ there exists an explicit constant $C_{d, e}$ such that if $\deg(f)=d$, $f(z)=z^e+O(z^{e+1})$ formally at $z=0$, and $f(\infty)=\infty$, then
\[\hcrit(f)\geq \left(\frac{1}{(d-1)d^2(4d^2-2(e+2)d+e+2)^{\log d/\log e}}\right) h_{\Hom_d}(f)-C_{d, e}.\]
\end{theorem}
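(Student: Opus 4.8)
This is the special case of Theorem~\ref{th:main} in which $f$ carries a super-attracting fixed point, and the reason to single it out is that here the estimate can be run concretely through the B\"ottcher coordinate at $0$, with no appeal to multiplier spectra or McMullen's theorem; it generalizes the polynomial bound of~\cite{pcfpoly}, which is recovered (up to the shape of the constant) at $e=d$, where the exponent $\log d/\log e$ degenerates to $1$.

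First we pass to the homogeneous picture. Lift $f$ to a pair $F=(F_1,F_2)$ of binary forms of degree $d$; the normalizations $f(z)=z^e+O(z^{e+1})$ and $f(\infty)=\infty$ pin $F$ down to an essentially canonical shape, with $x^e\mid F_1$ and the $y$-adic order of $F_2$ reflecting the local degree of $f$ at $\infty$. With $\hat G_{F,v}$ the local homogeneous Green's (escape-rate) function at a place $v$, one has $\hat h_f(P)=\sum_v\hat G_{F,v}(\tilde P)$ for any lift $\tilde P$. Since $\hat h_f\geq 0$, we get $\hcrit(f)\geq\hat h_f(\zeta_0)$ for any single critical point $\zeta_0\neq 0$, so it is enough to find one such $\zeta_0$ with $\hat h_f(\zeta_0)\geq c_{d,e}\,h_{\Hom_d}(f)-C_{d,e}$, where $c_{d,e}$ is the reciprocal appearing in the statement.

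The choice of $\zeta_0$ and the whole estimate are governed by the B\"ottcher coordinate $\phi_v$ conjugating $f$ to $z\mapsto z^e$ near $0$, which extends classically over $\CC$ and, by~\cite{bijl}, over non-archimedean $v$ as well, out to the nearest critical value of $f$. At a place $v$ where $\|F\|_v$ is large, the coefficients of $f$ being $v$-adically big forces the B\"ottcher domain $\{|\phi_v|<r_v\}$ to be small, and from this one extracts a critical point whose orbit passes through a region of $v$-adic size a fixed power of $\|F\|_v$ before being swallowed into $0$; feeding this into the functional equation $\hat h_f(\zeta)=d^{-k}\hat h_f(f^k\zeta)$ with $k=\lceil\log d/\log e\rceil$ --- the least $k$ for which the compounded local degree $e^k$ reaches $d$ --- turns it into the required lower bound for $\hat h_f(\zeta_0)$ at $v$. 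Summing over $v$ and discarding the nonnegative contributions of the remaining critical points yields the theorem. The exponent $\log d/\log e$ is precisely this number $k$ of B\"ottcher iterations; each iteration degrades the estimate by an explicit amount --- an exercise in bounding the B\"ottcher series together with the distortion of $f$ near $0$ in the normalized form --- that works out to $4d^2-2(e+2)d+e+2$, producing the factor $(4d^2-2(e+2)d+e+2)^{\log d/\log e}$, while the residual $(d-1)d^2$ absorbs the $1/(d-1)$ in the comparison of $\hat h_f$ with the naive height together with the two steps --- passing from a critical value back to a critical point, and from the critical-point data back to the coefficients of $f$ --- that each cost a factor of $d$.

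The main obstacle I anticipate is the non-archimedean side: one needs an effective, $v$-uniform estimate for the radius of convergence of the $v$-adic B\"ottcher coordinate and for the geometry of the immediate basin of $0$ at $v$, along with care at the finitely many places where $F$ or $\Res(F)$ fails to be a unit, which is what $C_{d,e}$ records. A secondary point: at some places the critical point one extracts may be drawn toward $\infty$ instead of toward $0$, and one must then run the analogous analysis at the fixed point $\infty$ --- which is where the hypothesis $f(\infty)=\infty$ is used --- and verify the two contributions do not cancel. The archimedean estimates, by contrast, come down to classical Koebe-type distortion bounds for the B\"ottcher map and are routine to make explicit.
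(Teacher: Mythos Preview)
Your outline has the right architecture --- a branch point is trapped in the super-attracting basin, its iterates shrink like $|z|^{e^k}$, and this feeds a lower bound on the Green's function --- but the bookkeeping that produces the specific constants is off, and the error hides a genuine gap.

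The problem is the step ``it is enough to find one such $\zeta_0$'' together with ``discarding the nonnegative contributions of the remaining critical points.'' The attracted critical point is chosen \emph{locally}: it depends on $v$, so there is no single global $\zeta_0$ whose canonical height you are bounding. You must instead bound, at each $v$, the Green's function of the \emph{entire} iterated branch divisor $f_*^kB_f'$, and then sum to get $d^{k+1}\hcrit(f)$. But the local Green's function $g_{f,v}(P,0)$ is \emph{not} nonnegative: the comparison with $\log^+|1/P|$ carries an error of size $\frac{2d-1}{d-1}\log\|f\|_v$ (Lemma~\ref{lem:greensineq}), and you incur this error once for every point of $B_f'$, not just the attracted one. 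So the local inequality reads, schematically,
\[
g_{f,v}(f_*^kB_f',0)\;\gtrsim\;\frac{e^k}{d-1}\log\|f\|_v\;-\;\deg(B_f')\cdot\frac{2d-1}{d-1}\log\|f\|_v,
\]
and after summing over $v$ the coefficient of $h_{\Hom_d}(f)$ is $\frac{1}{d-1}\big(e^k-(2d-1)\deg(B_f')\big)$. For this to be positive you need $e^k>(2d-1)(2d-e-1)$, i.e.\ $k\approx\log_e(4d^2)$, not $k=\lceil\log d/\log e\rceil$. With your choice of $k$ the inequality is vacuous.

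This also corrects your reading of the constants. The quantity $4d^2-2(e+2)d+e+2=(2d-1)(2d-e-1)+1$ is not a per-iteration degradation; it is the threshold $e^k$ must clear. The exponent $\log d/\log e$ then appears because $d^k=(e^k)^{\log d/\log e}$, and the extra $d^2$ comes from the $d^{k+1}$ relating $\hat h_f(f_*^kB_f')$ to $\hcrit(f)$ together with the ceiling in the choice of $k$. The paper does not use the B\"ottcher coordinate at all; it argues directly with Koebe (archimedean) and the Berkovich estimates of~\cite{bijl} (non-archimedean), but your B\"ottcher framing would reach the same local inequality once the accounting above is fixed.
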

Note that any rational function of degree $d$ with a fixed point of local degree $e\geq 2$ can be put in this form, since a super-attracting fixed point cannot be the only fixed point of $f$. On the other hand, the lower bound in the statement depends on the chosen form, since $h_{\Hom_d}$ is by no means constant on conjugacy classes. An explicit  value for $C_{d, e}$ appears in equation~\eqref{eq:exp} below. Examining the effect of change-of-coordinates on $f$, we can establish an inequality of the sort in Theorem~\ref{th:geom} for any rational function with a fixed point of local degree $e$, but the error term will depend (linearly) on the height of this fixed point, the height of the first nonzero coefficient of the Taylor series of $f$ at this point, and the height of one other fixed point (three data which suffice to fix a coordinate).

\begin{corollary}
For any $d\geq 2$, and any $B$, let $S\subseteq\M_d$ be the set of conjugacy classes of PCF rational functions of degree $d$ admitting a super-attracting fixed point, and admitting a model with coefficients of algebraic degree at most $B$. Then there is a finite and effectively computable list representatives of the classes in $S$.
\end{corollary}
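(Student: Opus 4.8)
The plan is to combine Theorem~\ref{th:geom} with the effective form of Northcott's finiteness theorem. Let $f$ represent a class in $S$, fix a model of $f$ with coefficients of algebraic degree at most $B$, and let $\gamma$ be a super-attracting fixed point of $f$, of local degree $e$ for some $e\in\{2,\dots,d\}$. Since a degree-$d$ map has $d+1$ fixed points counted with multiplicity and the multiplier at $\gamma$ vanishes, there is a second fixed point $\gamma'\neq\gamma$; as $\gamma$ and $\gamma'$ are roots of the fixed-point polynomial of a degree-$\le B$ model, each lies in an extension of $\QQ$ of degree bounded in terms of $d$ and $B$ alone. Conjugating by a M\"{o}bius transformation carrying $\gamma\mapsto 0$ and $\gamma'\mapsto\infty$, and then by a scaling $z\mapsto \alpha z$ normalizing the first nonzero Taylor coefficient of $f$ at $0$ to $1$ (which requires only adjoining an $(e-1)$st root), produces a model $\tilde f$ with $\tilde f(z)=z^e+O(z^{e+1})$ formally at $z=0$ and $\tilde f(\infty)=\infty$, whose coefficients lie in a number field of degree at most some $B'=B'(d,B)$.

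Now apply Theorem~\ref{th:geom} to $\tilde f$. Since $f$, hence $\tilde f$, is PCF, property~C$'$ gives $\hcrit(\tilde f)=0$, so
\[0\geq \frac{1}{(d-1)d^2(4d^2-2(e+2)d+e+2)^{\log d/\log e}}\,h_{\Hom_d}(\tilde f)-C_{d,e},\]
and hence $h_{\Hom_d}(\tilde f)\leq (d-1)d^2(4d^2-2(e+2)d+e+2)^{\log d/\log e}C_{d,e}$. Taking the maximum of the right-hand side over $e\in\{2,\dots,d\}$ gives an explicit bound $H=H(d,B)$, depending only on $d$ and $B$, with $h_{\Hom_d}(\tilde f)\leq H$ for every class in $S$.

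Thus every class in $S$ has a representative among the points of $\PP^{2d+1}(\overline{\QQ})$ of Weil height at most $H$ and algebraic degree at most $B'$, and this set is finite and effectively computable by the effective form of Northcott's theorem: one enumerates the finitely many polynomials over $\ZZ$ of degree at most $B'$ whose coefficients are bounded by an explicit function of $H$ and $B'$, takes their roots as candidate homogeneous coordinates, and retains the finitely many resulting tuples of height at most $H$. Discarding the tuples that do not define a rational map of degree $d$, and then those that are not PCF --- the PCF condition being decidable by iterating the finitely many critical points and testing whether each orbit enters a cycle --- leaves a finite and effectively computable list which still contains a representative of every class in $S$. One may further discard duplicate conjugacy classes, conjugacy of rational maps over $\overline{\QQ}$ being effectively testable, but this is not needed for the claim.

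The only step requiring genuine care is the reduction in the first paragraph: one must track how passing from the given degree-$\le B$ model to the normalized model $\tilde f$ inflates the field of definition, so as to pin down $B'$ as an explicit function of $d$ and $B$. Everything after that is Theorem~\ref{th:geom} together with standard effective Diophantine bookkeeping, and the resulting list is of course not canonical, since $h_{\Hom_d}$ is far from constant on conjugacy classes --- but this is harmless for the stated finiteness and effectivity.
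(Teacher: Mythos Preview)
Your approach is correct and is precisely the one the paper intends: the corollary is stated without proof, with the surrounding text indicating that Theorem~\ref{th:geom} supplies the explicit height bound missing from~\cite{bijl}, after which effective Northcott does the rest. Your reduction to the normal form, the tracking of the degree $B'$ of the field of definition, and the application of Theorem~\ref{th:geom} are all as expected.

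One small point deserves tightening. Your claim that the PCF condition is ``decidable by iterating the finitely many critical points and testing whether each orbit enters a cycle'' is correct but underspecified: as written, the procedure does not terminate on non-PCF input. The missing ingredient is an effective stopping time. For a fixed $\tilde f$ over a number field $L$, the explicit bound $|\hat{h}_{\tilde f}-h|\le C_{\tilde f}$ (with $C_{\tilde f}$ effective in the coefficients) shows every preperiodic point of $\tilde f$ has Weil height at most $C_{\tilde f}$; effective Northcott then bounds the number $N$ of such points in $\PP^1(L)$, and hence any preperiodic critical point must repeat within $N+1$ iterates. This makes the filtering step genuinely effective. (Alternatively, if one reads the corollary as asking only for a finite effectively computable list \emph{containing} a representative of each class in $S$, the filtering is unnecessary and the issue disappears.)
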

The finiteness follows from \cite[Theorem~1.1]{bijl}; what is new in this corollary is the explicit nature of the bound. Although we focus on number fields, the sorts of estimates that prove Theorem~\ref{th:geom} also have consequences in function fields, as noted in Remark~\ref{rm:ff}.

We present one last case in which we can give completely concrete (although probably not sharp) bounds. Milnor~\cite{milnor} explicitly described the moduli space of quadratic morphisms, and (except for a single one-parameter family that can be handled easily on its own) the family defined for $(\lambda_0, \lambda_\infty)\in\AA^2$ by
\[f_{\lambda_0, \lambda_\infty}(z)=\frac{\lambda_0 z+z^2}{\lambda_\infty z+ 1}\]
offers a double-cover of $\M_2$. For computational purposes, it is preferable to work on this cover, and to replace $h_{\M_2}(f)$ by the more explicit, but comparable, $h_{\PP^2}(\lambda_0, \lambda_\infty)$.
\begin{theorem}\label{th:quad}
For all $\lambda_0, \lambda_\infty\in\overline{\QQ}$, we have
\[\hcrit\left(f_{\lambda_0, \lambda_\infty}\right)\geq \frac{1}{2048}h_{\PP^2}(\lambda_0, \lambda_\infty)-0.012.\]
\end{theorem}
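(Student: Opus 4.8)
The plan is to carry out the argument sketched above in the case $d=2$, where every ingredient is available in closed form for $f=f_{\lambda_0,\lambda_\infty}$, and to keep track of all the constants. First I record the elementary data. The points $0$ and $\infty$ are fixed, with multipliers $\lambda_0$ and $\lambda_\infty$; the third fixed point is $\gamma=(1-\lambda_0)/(1-\lambda_\infty)$, and the relation $\lambda_0\lambda_\infty\mu=\lambda_0+\lambda_\infty+\mu-2$ forces its multiplier to be $\mu=(\lambda_0+\lambda_\infty-2)/(\lambda_0\lambda_\infty-1)$; the critical points are the roots $\zeta_\pm=(-1\pm\sqrt{1-\lambda_0\lambda_\infty})/\lambda_\infty$ of $\lambda_\infty z^2+2z+\lambda_0$; and the resultant of the homogeneous lift $(X^2+\lambda_0XY,\ Y^2+\lambda_\infty XY)$ is $1-\lambda_0\lambda_\infty$, so $f$ is a degree-$2$ morphism precisely away from the line $\lambda_0\lambda_\infty=1$, on which it collapses to a M\"obius map and there is nothing to prove. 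Since $h_{\PP^2}(\lambda_0,\lambda_\infty)\le h(\lambda_0)+h(\lambda_\infty)$, it is enough to bound $\hcrit(f)$ below by an explicit positive multiple of $h(\lambda_0)+h(\lambda_\infty)$, minus an explicit constant.

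Next I make the local linear-attraction (equivalently, Green's function) estimate quantitative. Fix a place $v$ and a fixed point $\gamma\in\{0,\infty\}$ with $|\lambda_\gamma|_v<1$. An effective form of Koenigs' linearization theorem at an archimedean $v$, or its non-archimedean counterpart from~\cite{bijl}, places one of $\zeta_\pm$ in the immediate basin of $\gamma$ and yields
\[\log^+\left|\frac{1}{f^k(\zeta)-\gamma}\right|_v\ \ge\ k\log^+\left|\lambda_\gamma^{-1}\right|_v-C_1(\gamma,v,k),\]
with $C_1(\gamma,v,k)$ an explicit quantity depending only on $\log\max\{|\lambda_0|_v,|\lambda_\infty|_v,1\}$ and at worst linearly on $k$, and zero for all but finitely many $v$; enlarging the constant makes the inequality hold for $|\lambda_\gamma|_v\ge1$ as well. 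When $|\lambda_0|_v<1$ and $|\lambda_\infty|_v<1$ simultaneously, the two immediate basins are disjoint and each contains a critical point, so $\zeta_+$ and $\zeta_-$ serve $0$ and $\infty$ at once. On the super-attracting loci $\lambda_0=0$ or $\lambda_\infty=0$ one simply drops the offending fixed point, and the super-linear attraction there is in any case stronger than what is claimed; this is the polynomial case of~\cite{pcfpoly}.

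Now I globalize. Summing the displayed inequality over $\gamma\in\{0,\infty\}$ and over all places, and using $h(\lambda_\gamma)=\sum_v\log^+|\lambda_\gamma^{-1}|_v$, the left side is at most $2\bigl(h(f^k(\zeta_+))+h(f^k(\zeta_-))\bigr)$, a factor $2$ entering because a critical point may be called upon for $0$ at one place and for $\infty$ at another, while the error $\sum_v C_1(\gamma,v,k)$ is bounded by a multiple of $h_{\PP^2}(\lambda_0,\lambda_\infty)$ plus a multiple of $k$ plus a constant. Since $\hat{h}_f(f^k(\zeta))=2^k\hat{h}_f(\zeta)$ and the difference between $\hat{h}_f$ and the Weil height is bounded by some $\beta(f)$, we get $2\bigl(h(f^k(\zeta_+))+h(f^k(\zeta_-))\bigr)\le 2^{k+1}\hcrit(f)+4\beta(f)$, and an explicit bound $\beta(f)\le a_0\,h_{\PP^2}(\lambda_0,\lambda_\infty)+a_1$ for this family follows from the usual comparison estimates applied to the lift above (the inputs being its coefficients and its resultant $1-\lambda_0\lambda_\infty$). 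Assembling everything gives, for all $k\ge1$,
\[k\,h_{\PP^2}(\lambda_0,\lambda_\infty)\ \le\ 2^{k+1}\hcrit(f)+a\,h_{\PP^2}(\lambda_0,\lambda_\infty)+b\,k+c\]
with $a,b,c$ explicit absolute constants, hence
\[\hcrit(f)\ \ge\ \frac{k-a}{2^{k+1}}\,h_{\PP^2}(\lambda_0,\lambda_\infty)-\frac{b\,k+c}{2^{k+1}}.\]
Choosing $k$ large enough that $k>a$ while $(b\,k+c)/2^{k+1}$ is small --- $k$ around $13$ should produce the coefficient $1/2048$ together with the constant $0.012$ --- completes the proof; the locus $\lambda_0\lambda_\infty=1$ and any finitely many bounded-height exceptional points are harmless and absorbed into $c$.

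The step I expect to be the main obstacle is producing a genuinely explicit $C_1(\gamma,v,k)$. At an archimedean place this requires controlling both the radius of convergence of the Koenigs coordinate and the modulus of its critical value in terms of $|\lambda_\gamma|$ and the coefficients of $f$, uniformly as $|\lambda_\gamma|\uparrow 1$; at a non-archimedean place it requires extracting effective constants from the arguments of~\cite{bijl}. Keeping the resulting constant $a$ small enough that some admissible $k$ exists is the only genuine constraint, and the comfortable value $1/2048$ in the statement suggests there is room to spare.
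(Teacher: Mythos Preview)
Your outline follows the general strategy of Sections~\ref{sec:greens}--\ref{sec:global} and is sound in principle, but the entire content of Theorem~\ref{th:quad} is the explicit pair of constants, and you have not computed them: you defer the local estimate $C_1(\gamma,v,k)$ to ``an effective form of Koenigs' linearization'' and ``extracting effective constants from~\cite{bijl}'', and then guess $k\approx 13$. Without those computations the proposal is a restatement of the plan in the introduction, not a proof of this particular theorem.

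The paper's proof differs from your plan in three ways that make the constants tractable. First, it works with the Arakelov--Green's functions $g_{f,v}$ throughout, so that the global sum is $d^{k+1}\hcrit(f)$ on the nose and no separate $\beta(f)=\sup|\hat h_f-h|$ term ever appears; your route through naive Weil heights introduces a $4\beta(f)$ that inflates the coefficient $a$. Second, rather than tracking both fixed points at every place (and paying your factor of $2$), the paper runs the local argument only at $\infty$, sums to get an inequality in $h(\lambda_\infty)$, and then invokes the conjugacy $z\mapsto 1/z$, which swaps $\lambda_0\leftrightarrow\lambda_\infty$ while fixing $\hcrit$, to get the companion inequality for free. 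Third, and most importantly, the local attraction lemma is \emph{not} proved via Koenigs linearization: the paper computes the resultant $B(X,Y)=\Res_{x,y}(YF_1-XF_2,\,W)$ to obtain the closed forms $\beta_1+\beta_2=2(\lambda_0\lambda_\infty-2)/\lambda_\infty^2$ and $\beta_1\beta_2=(\lambda_0/\lambda_\infty)^2$ for the branch points, and then derives a direct contradiction from these identities whenever $|\lambda_\infty|<\epsilon_v$ with $\epsilon_v\in\{\sqrt{2}-1,\,1/4,\,1\}$. This algebraic shortcut bypasses any uniformity issues as $|\lambda|\uparrow 1$ and yields $a=8$, so that $k=10$ (not $13$) gives $(k-8)/2^{k+2}=1/2048$ and the additive constant $\bigl(4\log 2+20(2\log 2+\log(\sqrt 2+1))\bigr)/4096\approx 0.012$.
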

Unfortunately, the constants above, while less intimidating than those in Theorem~\ref{th:geom}, are still generous enough that computing the smallest positive critical height on $\M_2(\QQ)$, say, might still be out of reach. 
\begin{corollary}\label{cor:quad}
Let $\operatorname{Per}_1(\lambda)\subseteq \M_2$ be the collection of quadratic morphisms with a fixed point of multiplier $\lambda\in\overline{\QQ}$, and suppose that
\[\inf_{f\in\operatorname{Per}_1(\lambda)}\hcrit(f)=0.\]
Then $h(\lambda)\leq \log 12$.
\end{corollary}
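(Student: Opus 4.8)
The plan is to run the argument behind Theorem~\ref{th:quad} one step short of its final, streamlined form, extracting an intermediate inequality that bounds a single fixed-point multiplier directly. Fix $\lambda\in\overline{\QQ}$ with $\inf_{f\in\operatorname{Per}_1(\lambda)}\hcrit(f)=0$; we may assume $\lambda\notin\{0,1\}$, since otherwise $h(\lambda)=0\leq\log 12$. As $\lambda\neq 1$ the fixed point of any $f\in\operatorname{Per}_1(\lambda)$ of multiplier $\lambda$ is simple, so conjugating it to $0$ and any second fixed point to $\infty$ puts $f$ in the form $f=f_{\lambda,t}$ for some $t\in\overline{\QQ}$. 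I would then recall the structure of the proof of Theorem~\ref{th:quad}: for an auxiliary integer $k\geq 1$ one combines the linear-attraction estimate of~\cite{bijl} for the fixed point of multiplier $\lambda$ --- summed over places and paired with the critical point it attracts --- with the comparison $h(P)=\hat h_f(P)+O_f(1)$ between the Weil and canonical heights, and then deduces the streamlined statement by bounding $h_{\PP^2}(\lambda,t)$ in terms of the heights of all three fixed-point multipliers and optimizing in $k$. The two things I want to exploit are that in the model $f_{\lambda,t}$ the attracted fixed point is $0$, contributing no height to the error, and that one may skip the last, lossy passage to $h_{\PP^2}$. Done this way, the estimates should give, for every $k\geq 1$ and every $f\in\operatorname{Per}_1(\lambda)$,
\[k\,h(\lambda)\leq 2^{k}\,\hcrit(f)+2^{k}\log 2+k\log 3,\]
hence
\[h(\lambda)\leq\frac{2^{k}}{k}\bigl(\hcrit(f)+\log 2\bigr)+\log 3.\]

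Next I would fix $k$ and let $f$ run through a sequence in $\operatorname{Per}_1(\lambda)$ with $\hcrit(f)\to 0$, obtaining $h(\lambda)\leq\tfrac{2^{k}}{k}\log 2+\log 3$. Since $\min_{k\geq 1}2^{k}/k=2$ (attained at $k=1$ and $k=2$), minimizing over $k$ yields
\[h(\lambda)\leq 2\log 2+\log 3=\log 12,\]
as claimed. For comparison, Theorem~\ref{th:quad} exactly as stated, together with the trivial bound $h_{\PP^2}(\lambda,t)\geq h(\lambda)$, would only give $h(\lambda)\leq 2048\cdot 0.012=24.576$; it is avoiding the passage to $h_{\PP^2}$ that buys the much sharper constant, so it is important that the intermediate inequality above be isolated cleanly.

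The hard part will be establishing that intermediate inequality with these precise constants: I will need to track the constants through the place-by-place attraction estimates of~\cite{bijl} and through the Weil-versus-canonical height comparison, specialized to the one-parameter family $f_{\lambda,t}$, tightly enough that the total error at $k=1$ does not exceed $2\log 2+\log 3$. The rest is routine: the cases $\lambda=0$ (where the relevant fixed point is super-attracting, so the attraction estimate does not apply, but $h(\lambda)=0$) and $\lambda=1$ are trivial, and no other degeneracy intervenes, since for $\lambda\notin\{0,1\}$ every $f\in\operatorname{Per}_1(\lambda)$ --- whether or not it lies in the generic part of Milnor's parametrization of $\M_2$ --- is genuinely conjugate to some $f_{\lambda,t}$.
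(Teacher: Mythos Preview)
Your proposed intermediate inequality
\[k\,h(\lambda)\leq 2^{k}\,\hcrit(f)+2^{k}\log 2+k\log 3\]
cannot be extracted from the argument you outline, and this is the essential gap. You are right that placing the attracted fixed point at $0$ makes $\hat h_f(0)=0$, so that term drops out of the global sum of Green's functions; but that is already how the paper proceeds (see \eqref{eq:hcritsum}), and it is not where the $t$-dependent error lives. The error comes from the comparison between $g_{f,v}$ and $\log^+$: Lemma~\ref{lem:greensineq} (and its quadratic refinement in Section~\ref{sec:quad}) gives
\[g_{f,v}(z,0)\geq \log^+|z^{-1}|_v-C\log\|f\|_v+\cdots,\]
and $\log\|f\|_v$ involves \emph{all} the coefficients of $f$. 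In the normal form $f_{\lambda,t}$ this sums over $v$ to a multiple of $h_{\PP^2}(\lambda,t)$ --- exactly the $4h_{\PP^2}(\lambda_0,\lambda_\infty)$ term in \eqref{eq:quad} --- which depends on $t$, not only on $\lambda$ and $\hcrit(f)$. So an inequality with error independent of $t$ is not a ``one step short'' byproduct; it is strictly stronger than what the local estimates deliver, and your outline gives no mechanism for getting rid of the $t$-dependence.

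The paper handles this by a second use of the same estimate rather than by skipping it: symmetrizing \eqref{eq:quad} under $z\mapsto 1/z$ and using $h(\lambda_0)+h(\lambda_\infty)\geq h_{\PP^2}(\lambda_0,\lambda_\infty)$ yields \eqref{eq:quadbound}, which for $k>8$ bounds $h_{\PP^2}(\lambda_0,\lambda_\infty)$ above in terms of $\hcrit(f)$ and constants. Substituting that bound back into \eqref{eq:quad} gives \eqref{eq:kbound}, whose only $f$-dependence is through $\hcrit(f)$; letting $\hcrit(f)\to 0$ along a sequence in $\operatorname{Per}_1(\lambda)$ and then $k\to\infty$ yields $h(\lambda)\leq\log 12$. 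Some such two-step elimination of the coefficient-height error is unavoidable here; your plan to ``skip the passage to $h_{\PP^2}$'' misidentifies where that term enters the argument.
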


In Section~\ref{sec:greens}, we gather the key estimates on local Arakelov-Green's functions associated to endomorphisms of $\PP^1$. We sum these in Section~\ref{sec:global} to prove Theorem~\ref{th:main}, and in the process Theorem~\ref{th:fibration}. In Section~\ref{sec:sa} we take up the problem of obtaining explicit bounds for rational functions admitting super-attracting fixed points, proving Theorem~\ref{th:geom}, 
and in Section~\ref{sec:quad} we treat Theorem~\ref{th:quad}.

\subsection*{Acknowledgements} The author would like to thank Kenneth Jacobs, Joseph Silverman,  and David McKinnon for helpful comments on an earlier draft, as well as the anonymous referees for many important corrections.


\section{Estimates for Green's functions}\label{sec:greens}

In this section we let $K$ be an algebraically closed field of charactersitic 0 or $p>d$, complete with respect to the absolute value $|\cdot|$, which might be archimedean or not. In the archimedean case, it will suffice to consider the case $K=\CC$ (since every archimedean field is, up to scaling of the absolute value, a subfield of $\CC$). Recall that $\log^+ x = \log\max\{x, 1\}$, and note that if $n$ is an integer, then
\[\log^+|n|=\begin{cases}\log n&\text{ if $v$ is archimedean}\\ 0&\text{ otherwise.}\end{cases}\]
We use this to simplify notation in several places. Indeed, the triangle and ultametric inequalities appear often as
\[\log|x_1+\cdots +x_n|\leq \log\max\{|x_1|, ..., |x_n|\}+\log^+|n|.\]
We will also write $D(a, r)$ for the open disk of radius $r$ at $a$, that is,
\[D(a, r)=\{z\in K:|a-z|<r\}.\]

 Given a rational function $f(z)\in K(z)$ with $d=\deg(f)\geq 2$, we recall the construction of the dynamical Arakelov-Green's function $g_f:(\PP^1_K)^2\to \RR$ associated to $f$ (see \cite[\S10.2]{br} for more details). We first choose a pair of homogeneous forms $F_1, F_2\in K[x, y]$ with $f(x/y)=F_1(x, y)/F_2(x, y)$, and then define for $F=(F_1, F_2)$,
\[H_F(x, y)=\lim_{n\to\infty}d^{-n}\log\|F^n(x, y)\|,\]
where we always take $\|x_1, ..., x_m\|=\max\{|x_1|, ..., |x_m|\}$. We then set
\[g_f([x:y], [z:w])=-\log|yz-xw|+H_F(x, y)+H_F(z, w)-r(F),\]
for
\[r(F)=\frac{1}{d(d-1)}\log|\operatorname{Res}(F_0, F_1)|.\] This is easily shown to be independent of choice of representative homogeneous coordinates, and even of the choice of the forms $F_0$ and $F_1$ (but we will in fact impose a particular choice below).

 For the remainder of this section, we consider rational functions of the form
\begin{equation}\label{eq:normalform}
f(z)=\frac{\lambda z+\cdots + a_{d}z^{d}}{1+b_1z+\cdots +b_{d}z^{d}}=\frac{\lambda z\prod (1-\alpha_i z)}{\prod(1-\beta_j z)},
\end{equation}
and after the proof of Lemma~\ref{lem:greensineq} we will assume that $\lambda\neq 0$ and that $b_d=0$, ensuring that $f(\infty)=\infty$.
We set \[\|f\|=\max\{|\lambda|, |a_2|, \cdots, |a_d|, 1, |b_1|, \cdots, |b_d|\},\] noting that this implies $\log\|f\|\geq 0$. For convenience of notation, we will also set $a_1=\lambda$  and, in the early part of the next proof, refer to the constant terms in the numerator and denominator simply as $a_0$ and $b_0$ (it does not matter until later that $a_0=0$ and $b_0=1$). In terms of the construction of $g_f$ above, we  choose once and for all the obvious homogeneous lift \[F(x, y)=(\lambda xy^{d-1}+\cdots +a_dx^d, y^d+\cdots +b_dx^d),\]
allowing us to speak unambiguously about $r(f)=r(F)$.

The following lemma is related to \cite[Lemma~10.1, p.~294]{br}, \cite[Poposition~5.57, p.~288]{ads}, and similar results that have appeared elsewhere, but for our purposes we need error terms that are uniform in the coefficients of $f$.
\begin{lemma}\label{lem:greensineq}
For $f$ of the form~\eqref{eq:normalform} and for all $z\in \PP^1_K$,
\[g_f(z, 0)\geq \log^+|z^{-1}|-\frac{1}{d-1}\log^+|2d(2d-1)!|-\frac{2d-1}{d-1}\log\|f\|+(d-1)r(f).\]
\end{lemma}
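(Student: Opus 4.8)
The plan is to unwind the definition of $g_f(z,0)$ in terms of the chosen homogeneous lift $F$, and to estimate each of the three ingredients — the distance term $-\log|yz-xw|$ specialized to the second point being $0=[0:1]$, the Green's function value $H_F$ at $z$ and at $0$, and the resultant term $r(f)$ — separately and uniformly in the coefficients. Writing $z=[x:y]$ and taking the homogeneous representative $(0,1)$ for the point $0$, the distance term becomes $-\log|x|$, so that after projectivizing back to the affine coordinate we are looking at a quantity comparable to $\log^+|z^{-1}|$ up to the contribution of $H_F(x,y)$. The real content is therefore a lower bound for $H_F$ at a general point and a matching two-sided control of $H_F(0,1)$, since $H_F(0,1)$ is exactly the "value at the fixed point $\infty$'' — no, at $0$ — and this is where the normalization $a_0=0$, $b_0=1$ will be used.

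The key steps, in order, are: (1) Recall that $H_F(x,y) = \log\|x,y\| + O(1)$ with an explicit $O(1)$; concretely, one shows $\big|H_F(x,y)-\log\|x,y\|\big| \le \tfrac{1}{d-1}\log^+|2d| + (\text{something})\log\|f\|$ by the standard telescoping argument, comparing $\log\|F(x,y)\|$ with $d\log\|x,y\|$ one step at a time and summing the geometric series $\sum d^{-n}$. The upper bound $\log\|F(x,y)\|\le d\log\|x,y\|+\log\|f\|+\log^+|d+1|$ is immediate from the triangle inequality in the form quoted in the text; the lower bound $\log\|F(x,y)\| \ge d\log\|x,y\| - \log\|f\| - (\text{const})$ is where one must be careful, and it is cleanest to invoke a resultant-based estimate: the two coordinate forms of $F$ have no common projective zero precisely because $\operatorname{Res}(F)\ne 0$, and a Nullstellensatz-with-bounds / Macaulay-style inequality gives $\|F(x,y)\|^{?} \ge |\operatorname{Res}(F)| \cdot \|x,y\|^{?} / \|f\|^{?}$ with explicit exponents, the factor $(2d-1)!$ entering as the determinant size in the Sylvester matrix of two degree-$d$ forms. (2) Specialize step (1) at $(x,y)=(0,1)$: since $a_0=0,b_0=1$ we get $F(0,1)=(0,1)$, hence $\log\|F(0,1)\|=0$, and more importantly $H_F(0,1)$ is pinned down by the same telescoping bound applied along the (eventually constant, since $0$ is fixed) orbit, giving $|H_F(0,1)|\le \tfrac{1}{d-1}\log^+|\cdots| + (\text{const})\log\|f\|$. (3) Assemble: $g_f(z,0) = -\log|x| + H_F(x,y) + H_F(0,1) - r(f) \ge -\log|x| + \log\|x,y\| - (\text{error}) - r(f)$; since $-\log|x|+\log\max\{|x|,|y|\} = \log^+|y/x| = \log^+|z^{-1}|$ when we pass back to the affine coordinate $z=x/y$ (taking the representative with $y=1$ when $z$ is finite and handling $z=\infty$ directly), this yields exactly the claimed shape, and one then bookkeeps the constants so that the combined error is at most $\tfrac{1}{d-1}\log^+|2d(2d-1)!| + \tfrac{2d-1}{d-1}\log\|f\| - (d-1)r(f)$.

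The main obstacle is step (1)'s \emph{lower} bound on $\log\|F(x,y)\|$ with an error that is uniform in the coefficients and has the right shape — a clean, effective homogeneous Nullstellensatz for a pair of binary forms, with the resultant appearing with the correct power and the $(2d-1)!$ coming out of the Sylvester determinant. The upper bounds and the telescoping are routine; the delicate part is not losing a factor depending on $\|f\|$ in the wrong direction, and correctly tracking how $r(f)=\tfrac{1}{d(d-1)}\log|\operatorname{Res}(F)|$ cancels against the resultant factor generated in the lower bound, leaving the residual $+(d-1)r(f)$ (which is harmless, being $\le 0$ up to a bounded amount, or simply kept as stated). A secondary nuisance is the uniform treatment of the point at infinity and of the cases $|z|\le 1$ versus $|z|>1$, but these are handled by the coordinate bookkeeping in step (3) and do not require new ideas.
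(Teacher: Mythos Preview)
Your strategy is essentially the paper's proof: the effective Nullstellensatz for the pair $(F_1,F_2)$ via the Sylvester/Cramer identity $G_1F_1+G_2F_2=\Res(F_1,F_2)\,x^{2d-1}$ gives the one-step lower bound $\tfrac{1}{d}\log\|F(x,y)\|\ge \log\|x,y\| - \tfrac{2d-1}{d}\log\|f\| - \tfrac{1}{d}\log^+|2d(2d-1)!| + \tfrac{1}{d}\log|\Res(F_1,F_2)|$, and telescoping yields the lower bound on $H_F$ with the geometric factor $\tfrac{d}{d-1}$, exactly as you outline.

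One simplification you are overcomplicating in step~(2): since $F(0,1)=(0,1)$ \emph{on the nose}, the orbit is constant and $H_F(0,1)=\lim_n d^{-n}\log\|F^n(0,1)\|=0$ exactly, not merely bounded. You should use $H_F(0,1)=0$, not a two-sided telescoping estimate; if you instead bound $H_F(0,1)$ below by the general lower bound on $H_F$, you double the error term and will not recover the stated constants. With $H_F(0,1)=0$, the assembly in step~(3) gives $g_f(z,0)=-\log|x|+H_F(x,y)-r(f)\ge \log^+|z^{-1}|-\tfrac{1}{d-1}\log^+|2d(2d-1)!|-\tfrac{2d-1}{d-1}\log\|f\|+dr(f)-r(f)$, and the $r(f)$ bookkeeping falls out immediately as $(d-1)r(f)$.
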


\begin{proof}
This estimate is quite general (we do not assume $\lambda\neq 0$), and in large part standard. 

Let
\begin{gather}
F_1(x, y)=a_dx^d+\cdots +a_0y^d\nonumber\\
F_2(x, y)=b_dx^d+\cdots +b_0y^d,\nonumber
\end{gather}
with no other hypotheses on the coefficients. Using Cramer's Rule, we can solve
\begin{equation}\label{eq:bezout}G_1F_1+G_2F_2=\Res(F_1, F_2)x^{2d-1}\end{equation}
with $G_1$ and $G_2$ homogeneous forms of degree $d-1$ in $x$ and $y$, whose coefficients are homogeneous forms of degree $2d-1$ in the $a_i$ and $b_j$, each a sum of at most $(2d-1)!$ monomials of coefficient $\pm 1$. So we have
\[\log |G_i(x, y)|\leq (d-1)\log\|x, y\| + (2d-1)\log\|a_0, ..., b_{d}\|+\log^+|d(2d-1)!|,\]
and combining this with~\eqref{eq:bezout}, we obtain
\begin{multline*}
(2d-1)\log|x|+\log |\Res(F_1, F_2)|\leq \log \|F_1(x, y), F_2(x, y)\|\\+(d-1)\log\|x, y\| + (2d-1)\log\|a_0, ..., b_{d}\|+\log^+|2d(2d-1)!|.
\end{multline*}
Obtaining a similar upper bound on $(2d-1)\log|y|+\log|\Res(F_1, F_2)|$, we then derive
\begin{multline*}
\frac{1}{d}\log\|F(x, y)\|\geq \log\|x, y\|-\frac{2d-1}{d}\log\|a_0, ..., b_d\|\\-\frac{1}{d}\log^+|2d(2d-1)!|+\frac{1}{d}\log|\Res(F_1, F_2)|
\end{multline*}
for all $x$ and $y$ (not both 0). By induction, a bound of the form \[\frac{1}{d}\log\|F(x, y)\|\geq \log\|x, y\|-C\] for all $x$ and $y$ (not both 0) implies one of the form
\[\frac{1}{d^n}\log\|F^n(x, y)\|\geq \log\|x, y\|-\left(1+\frac{1}{d}+\cdots+\frac{1}{d^{n-1}}\right)C,\]

and so taking limits we have
\[H_F(x, y)\geq\log\|x, y\|-\frac{1}{d-1}\log^+|2d(2d-1)!|-\frac{2d-1}{d-1}\log\|f\|+dr(f).\]
(recalling that $\|f\|=\max\{|a_0|, ..., |b_d|\}$ and $r(f)=\frac{1}{d(d-1)}\log|\Res(F_1, F_2)|$ for our choice of lift $f(z)=F_1(z, 1)/F_2(z, 1)$). 

Now, returning to our hypotheses, our lift $F$ of $f$ satisfies $F(0, 1)=(0, 1)$ so $H_F(0, 1)=0$. By definition, for $z=[x:y]$ we have
\begin{eqnarray*}
g_f(z, 0)&=&g_f([x:y], [0:1])\\
&=&-\log|x|+H_F(x, y)-r(f)\\
&\geq &\log^+|z^{-1}|-\frac{1}{d-1}\log^+|2d(2d-1)!|-\frac{2d-1}{d-1}\log\|f\|+(d-1)r(f).
\end{eqnarray*}
\end{proof}

Our next lemma is essentially a classical result of Fatou, that any attracting cycle attracts a critical point, along with its $p$-adic analogue~\cite{bijl}, both given a slightly more explicit form.  For the statement, we define two constants $\epsilon_v>0$ and $C_v$, depending on $d$ and on the nature of the valuation $v$. We set
\[C_v=\begin{cases} 3^{d-1}&\text{if $v$ is archimedean,}\\
1&\text{otherwise,}
\end{cases}
\]
and
\begin{equation*}\epsilon_v=\begin{cases}\frac{1}{8}&\text{if $d=2$ and $v$ is archimedean,}\\
\frac{1}{C_v}&\text{if $d\geq 3$ and $v$ is archimedean,}\\
\min_{1\leq m\leq d}|m|^d&\text{otherwise.}
\end{cases}\end{equation*}
We note that if $v$ is non-archimedean, and not $p$-adic for any $p\leq d$, then $\epsilon_v=C_v=1$. This is the case, for example, for any place of the function field $\CC(X)$ of a variety $X/\CC$.

\begin{lemma}\label{lem:attraction}
Let $f$ be of the form~\eqref{eq:normalform} with $b_d=0$, and suppose that $0<|\lambda|<\epsilon_v$. Then there exists a branch point $\beta$ of $f$ with
\[0<|f^k(\beta)|\max\{|\alpha_1|, ..., |\beta_{d-1}|\}\leq \left(C_v|\lambda|\right)^k\]
for all $k\geq 1$, with $\alpha_i$ and $\beta_j$ as in~\eqref{eq:normalform}.
\end{lemma}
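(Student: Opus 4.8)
The plan is to follow the classical argument of Fatou (and its non-archimedean counterpart from \cite{bijl}), tracking constants explicitly. Since $f(z)=\lambda z+O(z^2)$ with $0<|\lambda|<1$ and $f(\infty)=\infty$ (as $b_d=0$), the point $0$ is an attracting fixed point with nonzero multiplier. First I would locate a disk $D=D(0,r)$ on which $f$ is a bijection onto its image, with $f(D)\subseteq D$ and on which a definite contraction estimate holds: something like $|f(z)|\leq |\lambda|\cdot|z|\cdot(\text{something bounded in terms of the }\alpha_i,\beta_j)$, more precisely an estimate giving $|f^k(z)|\,M\leq (C_v|\lambda|)^k$ where $M=\max\{|\alpha_i|,|\beta_j|\}$ is the reciprocal of the radius of convergence of the relevant power series. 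The role of the hypothesis $|\lambda|<\epsilon_v$ is exactly to guarantee that such a disk of the required size exists: in the archimedean case one needs room for the Koebe/Schwarz-type distortion bounds (hence the $\frac18$ for $d=2$ and $1/3^{d-1}$ for $d\geq 3$), and in the non-archimedean case the constant $\min_m|m|^d$ accounts for denominators appearing when one inverts power series or differentiates, which is invisible away from residue characteristics $p\leq d$.

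Next I would invoke the complementary half of Fatou's principle: the immediate basin of attraction of $0$ must contain a critical point. The standard argument is that if the basin contained no critical value, then the local inverse branch of $f$ fixing $0$ would extend to a univalent (archimedean) or isometric/analytic (non-archimedean) map on an ever-growing family of disks, forcing the multiplier $|\lambda|\geq 1$ by a normal-families / maximum-modulus argument — contradiction. Concretely, pulling back $D$ under the inverse branch of $f$ and iterating, one produces disks $D=D_0\subseteq D_1\subseteq D_2\subseteq\cdots$ with $f(D_{n+1})=D_n$, and if no $D_n$ contained a critical point these would be genuine increasing disks on which $f^{-n}$ is defined with derivative of absolute value $|\lambda|^{-n}\to\infty$ at $0$, which is incompatible with the geometry (the derivative of a univalent map on a disk of radius $\rho$ into $\PP^1$ is controlled; in the non-archimedean case $f^{-n}$ would be an isometry onto its image and the derivative has absolute value $1$). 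Hence some $D_{n_0}$ contains a critical point $\zeta$, and $\beta:=f^{n_0}(\zeta)$ — which is a critical value, i.e.\ a branch point of $f$ — lies in $D_0$, with $\beta\neq 0$ (the orbit of $0$ being a single point, a critical point mapping eventually to $0$ would make $0$ itself post-critically special, but in any case if $\beta=0$ one can choose a different branch point or the statement is vacuous after noting $|f^k(\beta)|>0$ is required). From $\beta\in D_0$ and the contraction estimate applied to the forward orbit of $\beta$, I get $0<|f^k(\beta)|\,M\leq (C_v|\lambda|)^k$ for all $k\geq 1$.

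The main obstacle is making the contraction estimate uniform in the coefficients with the clean constant $C_v$, rather than some messy expression in the $\alpha_i,\beta_j$. The tension is that the naive bound $|f(z)|\leq|\lambda||z|\cdot\frac{\prod(1+|\alpha_i||z|)}{\prod(1-|\beta_j||z|)}$ degrades badly near the boundary of any fixed disk, so one must choose the radius $r$ as a function of $M=\max|\alpha_i|,|\beta_j|$ — say $r$ comparable to $1/M$ up to the place-dependent slack $\epsilon_v$ — so that the product factors contribute at most $C_v$ (in the archimedean case bounding $\prod(1+|\alpha_i|r)/\prod(1-|\beta_j|r)$ by $3^{d-1}$ when the $|\alpha_i|r,|\beta_j|r$ are suitably small; non-archimedeanly the factors are $\leq 1$ in absolute value so $C_v=1$). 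One then has to check that this same disk is large enough for the Fatou half of the argument — that the inverse branch really does grow — which is precisely where $\epsilon_v$ is calibrated, and verifying the two requirements are compatible (i.e.\ that the admissible range of $r$ is nonempty under $|\lambda|<\epsilon_v$) is the delicate bookkeeping. The small cases $d=2$ archimedean presumably need the sharper constant $\frac18$ because the crude distortion bound is too lossy; I would handle that case, or perhaps all small archimedean cases, by a direct Schwarz-lemma computation on an explicitly chosen disk.
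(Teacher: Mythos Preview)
Your overall strategy matches the paper's---establish a contraction estimate on a disk around $0$ of radius comparable to $1/M$ with $M=\max\{|\alpha_i|,|\beta_j|\}$, then use a Fatou-type argument to put a branch point in that disk---but the Fatou half of your plan has a genuine gap.

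You write that once the iterated pullbacks $D_0\subseteq D_1\subseteq\cdots$ first contain a critical point $\zeta\in D_{n_0}$, then ``$\beta:=f^{n_0}(\zeta)$ --- which is a critical value, i.e.\ a branch point of $f$ --- lies in $D_0$''. This is the error: a branch point of $f$ is $f(\zeta)$ for $\zeta$ critical, whereas $f^{n_0}(\zeta)$ for $n_0>1$ is merely a post-critical point. The lemma demands a genuine branch point. If you repair this by setting $\beta=f(\zeta)$, then $\beta$ lies in $D_{n_0-1}$, not in $D_0$, and your contraction bound $|f(z)|\le C_v|\lambda||z|$ was only established on $D_0$. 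For $1\le k<n_0-1$ you then have no control on $|f^k(\beta)|$, and the claimed inequality $|f^k(\beta)|\,M\le(C_v|\lambda|)^k$ need not hold. Your normal-families argument shows \emph{some} $D_{n_0}$ eventually contains a critical point, but gives no bound on $n_0$, so this is not a cosmetic issue.

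The paper avoids iteration entirely. After conjugating by a scaling to normalize $M=1$ (equivalent to your choice $r\sim 1/M$, though you did not state the invariance of the claimed inequality under $f(z)\mapsto\xi^{-1}f(\xi z)$), the contraction disk in the archimedean case is $D(0,\tfrac12)$. One then shows directly that $D(0,\tfrac12)$ contains a nonzero branch point: if not, the connected component $W$ of $f^{-1}(D(0,\tfrac12))$ containing $0$ maps biholomorphically to $D(0,\tfrac12)$ under $f$, so $W$ has conformal radius exactly $(2|\lambda|)^{-1}$ at the origin; but $W$ omits some pole or nonzero root of $f$ on the unit circle, so Koebe's $\tfrac14$ theorem bounds this conformal radius by $4$. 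Hence $|\lambda|\ge\tfrac18$, contradicting $|\lambda|<\epsilon_v$. In your language this forces $n_0=1$, and the constant $\tfrac18$ for $d=2$ is precisely the Koebe bound---not a separate Schwarz computation as you guessed (for $d\ge3$ one has $1/3^{d-1}\le 1/9<1/8$, so Koebe is not the binding constraint). In the non-archimedean case the paper likewise shows $D(0,1)$ already contains a nonzero branch point, but this requires the Berkovich-analytic argument of \cite[Theorem~4.1]{bijl} on directional multiplicities, which is where $\epsilon_v=\min_{1\le m\le d}|m|^d$ enters; the sketch via ``isometric inverse branches with derivative $1$'' does not by itself produce a branch point in the original disk.
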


In the case $K=\CC$, we use a more-or-less standard argument from complex dynamics.
\begin{proof}[Proof of Lemma~\ref{lem:attraction} for $K=\CC$]
Note that both sides of the inequality are left fixed by the conjugacy $f(z)\mapsto \xi^{-1}f(\xi z)$, and so without loss of generality we may assume that $\max\{|\alpha_1|, ..., |\beta_{d-1}|\}=1$. Given this, if $0< |z|\leq \frac{1}{2}$, then
\[0\neq|f(z)|=|\lambda z|\frac{\prod_{i=1}^{d-1}|1-\alpha_iz|}{\prod_{j=1}^{d-1}|1-\beta_jz|}\leq |\lambda z|\frac{\left(\frac{3}{2}\right)^{d-1}}{\left(\frac{1}{2}\right)^{d-1}}=|\lambda z|3^{d-1} < |z|\]
by the triangle inequality, and by our choice of $\epsilon_v$. Note that $f(z)\neq 0$ because every nonzero root $\alpha_i^{-1}$ of $f$ has $|\alpha_i^{-1}|\geq 1$.
By induction,
\[0\neq |f^k(z)|\leq \left(C_v|\lambda|\right)^k\]
for all $k\geq 1$ and all nonzero $z\in D(0, \frac{1}{2})=\{z\in \CC:|z|< \frac{1}{2}\}$. It remains to show that $D(0, \frac{1}{2})$ contains a nonzero branch point of $f$.

Suppose that $f$ has no branch point in the disk $D(0, \frac{1}{2})$, and let $W$ be the connected component of $f^{-1}(D(0, \frac{1}{2}))$ containing $0$. Since $f(W)= D(0, \frac{1}{2})$ we know that $W$ contains no poles of $f$, and since $f:W\to D(0, \frac{1}{2})$ is unbranched we know that $W$ contains no zeros other than $z=0$, and that $W$ is simply connected. But $f$ has at least one pole or nonzero root on the unit circle, and so $W$ does not contain the closed unit disk. By Koebe's $\frac{1}{4}$ Theorem, $W$ has conformal radius no greater than $4$ (relative to the origin). On the other hand, the map $f:W\to D(0, \frac{1}{2})$ witnesses that $W$ has conformal radius exactly $(2|\lambda|)^{-1}$, whereupon $|\lambda|\geq \frac{1}{8}$. This contradicts our hypothesis that $|\lambda|<\epsilon_v$.
\end{proof}

\begin{proof}[Proof of Lemma~\ref{lem:attraction} for $K$ non-archimedean]
Here we use the main result of \cite{bijl}. Again scaling coordinates, we may take
 $\max\{|\alpha_1|, ..., |\beta_d|\}=1$. By the ultrametric inequality, for any nonzero $z\in D(0, 1)$ we have
\[0\neq |f^k(z)|=|\lambda f^{k-1}(z)|\frac{\prod_{i=1}^{d-1}|1-\alpha_if^{k-1}(z)|}{\prod_{i=1}^{d-1}|1-\beta_jf^{k-1}(z)|}=|\lambda f^{k-1}(z)|=|\lambda|^k|z|\leq |\lambda|^k\]
for all $k\geq 1$. It is now enough to show that $D(0, 1)$ contains a nonzero branch point of $f$. This follows from~\cite[Theorem~4.1]{bijl}. Specifically, as in the preamble to the proof of that theorem, we have chosen coordinates so that $z=0$ is the fixed point with multiplier $\lambda$ and that $z=\infty$ is also a fixed point of $f$. We have also scaled the coordinate so that the smallest pole or nonzero root of $f$ has absolute value $1$, noting that $\alpha_1, ..., \beta_{d-1}$ are the reciprocals of these roots and poles. By the proof of~\cite[Theorem~4.1]{bijl}, there is now a branch point of $f$ in $D(0, 1)$ as long as
\[0<|\lambda|<|\deg_{\zeta, \vec{w}}f|^d\]
for all $\zeta$ in the Berkovich analytic space $\PP^1_{\mathrm{Berk}}$ and all tangent directions $\vec{w}$. But the directional multiplicity $\deg_{\zeta, \vec{w}} f$ is an integer between $1$ and $d$, and so this condition is ensured by $0<|\lambda|<\epsilon_v$.
\end{proof}

Before proceeding with the key lemma of this section, we note a standard relation between the sizes of the roots of a polynomial and those of its coefficients.
\begin{lemma}\label{lem:newtonP}
Let $e_1, ..., e_k\in K$, and suppose that
\[(z-e_1)(z-e_2)\cdots (z-e_k)=z^k+c_{k-1}z^{k-1}+\cdots +c_0.\]
Then
\begin{equation}\label{eq:rootsupper}\log\|e_1, ..., e_k\|\leq \log^+\|c_{k-1}, ..., c_0\| +\log^+|2|\end{equation}
and
\begin{equation}\label{eq:rootslower}\log\|c_{k-1}, ..., c_0\|\leq k\log^+\|e_1, ..., e_k\|+k\log^+|2|.\end{equation}
\end{lemma}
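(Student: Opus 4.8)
The plan is to prove the two inequalities separately, in each case reducing everything to the ``triangle/ultrametric'' inequality $\log|x_1+\cdots+x_n|\leq\log\max\{|x_1|,\dots,|x_n|\}+\log^+|n|$ recorded above, together with the fact that $\log^+|n|$ equals $\log n$ in the archimedean case and $0$ otherwise. Throughout I abbreviate $\|c\|=\|c_{k-1},\dots,c_0\|$ and $\|e\|=\|e_1,\dots,e_k\|$.

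For \eqref{eq:rootslower} I would simply expand the product $\prod_{i=1}^{k}(z-e_i)$. Up to sign, $c_{k-j}$ is the sum of the $\binom{k}{j}$ products $e_{i_1}\cdots e_{i_j}$ over $j$-element subsets $\{i_1<\cdots<i_j\}\subseteq\{1,\dots,k\}$, and each such product has logarithm at most $j\log^+\|e\|$ since $\log|e_{i_\ell}|\leq\log^+\|e\|$ for every factor. As $\binom{k}{j}\leq 2^k$, the displayed inequality gives $\log|c_{k-j}|\leq j\log^+\|e\|+\log^+\binom{k}{j}\leq k\log^+\|e\|+k\log^+|2|$, and taking the maximum over $0\leq j\leq k-1$ yields \eqref{eq:rootslower}; when $\|c\|<1$ there is nothing to prove, since the right-hand side is nonnegative.

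For \eqref{eq:rootsupper} I would work one root at a time: fix $e=e_i$ and assume $|e|>1$, since otherwise $\log|e|\leq 0$ is already dominated by the right side. From the relation $e^{k}=-(c_{k-1}e^{k-1}+\cdots+c_0)$ the non-archimedean case is immediate: by the ultrametric inequality and $|e|\geq 1$ one has $|e|^{k}\leq\|c\|\,|e|^{k-1}$, hence $|e|\leq\|c\|$ and $\log|e|\leq\log^+\|c\|=\log^+\|c\|+\log^+|2|$. In the archimedean case I would instead invoke the classical Cauchy estimate: summing the geometric series, $|e|^{k}\leq\|c\|\sum_{j=0}^{k-1}|e|^{j}=\|c\|\,\frac{|e|^{k}-1}{|e|-1}<\|c\|\,\frac{|e|^{k}}{|e|-1}$, so $|e|-1<\|c\|$ and therefore $|e|<1+\|c\|\leq 2\max\{1,\|c\|\}$, giving $\log|e|<\log 2+\log^+\|c\|=\log^+|2|+\log^+\|c\|$. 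Taking the maximum over $i$ proves \eqref{eq:rootsupper}.

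I do not expect any real obstacle here: this is a standard estimate, and the only points needing care are the uniform bookkeeping across the archimedean and non-archimedean cases and the harmless degenerate cases in which one side of an inequality is negative or a coefficient vanishes. The single substantive choice is that, for the archimedean half of \eqref{eq:rootsupper}, the crude bound $|e|^{k}\leq k\|c\|\,|e|^{k-1}$ would only produce the constant $\log k$ in place of $\log 2$, so the geometric-series refinement is what yields the clean statement as written.
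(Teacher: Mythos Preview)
Your proof is correct, and for \eqref{eq:rootslower} and the non-archimedean half of \eqref{eq:rootsupper} it matches the paper essentially verbatim. The one genuine difference is in the archimedean case of \eqref{eq:rootsupper}: the paper invokes Fujiwara's weighted estimate, choosing weights $w_j=2^{j-k}$ so that the hypothesis $|e_i|>2|c_j|^{1/(k-j)}$ for all $j$ forces a contradiction, yielding the intermediate bound $\log|e_i|\leq\log^+\|c_{k-1},c_{k-2}^{1/2},\dots,c_0^{1/k}\|+\log 2$ before relaxing to the stated inequality. Your Cauchy-bound argument (summing the geometric series to get $|e|-1<\|c\|$) is more elementary and reaches the same constant $\log 2$ directly; Fujiwara's route produces a slightly sharper intermediate statement, but that extra sharpness is never used in the paper, so nothing is lost by your approach.
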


\begin{proof}
To prove~\eqref{eq:rootslower}, note that $c_i$ is (up to sign) the elementary symmetric polynomial of degree $k-i$ in the variables $e_1, ..., e_k$. Since this symmetric polynomial is the sum of $\binom{k}{k-i}=\binom{k}{i}$ monomials of total degree $k-i$, we have
\[\log|c_i|\leq (k-i)\log\|e_1, ..., e_k\|+\log^+\left|\binom{k}{i}\right|.\]
Replacing $\log\|e_1, ..., e_k\|$ with $\log^+\|e_1, ..., e_k\|$ only weakens the bound.
From this we have
\[\log |c_i|\leq k\log^+\|e_1, ..., e_k\|+\log^+\left|\binom{k}{i}\right|.\]
The claimed inequality follows from the fact that $\binom{k}{i}\leq 2^k$.

For non-archimedean absolute values, \eqref{eq:rootsupper} follows from the theory of Newton Polygons. More directly, if~\eqref{eq:rootsupper} fails in the non-archimedean case, then some $e_i$ satisfies $|e_i|>|c_j|$ for all $j$, and $|e_i|>1$. It follows then that $|c_je_i^j|<|e_i^k|$ for each $0\leq j< k$, and so
\[0=|e_i^k+c_{k-1}e_i^{k-1}+\cdots +c_0|=|e_i^k|>1,\]
which is impossible. For archimedean absolute values, we use an argument of Fujiwara~\cite{fujiwara}. If we assume that $w_j|e_i^k|> |c_je_i^j|$ for all $j$ (and some choice of weights $w_j$), we derive a contradiction to
\[0=|e_i^k+c_{k-1}e_i^{k-1}+\cdots +c_0|\geq |e_i^k|-\sum_{j=0}^{k-1}|c_je_i^j|\]
as soon as $\sum_{j=0}^{k-1}w_j<1$. In particular, we obtain a contradiction if we take $w_j=2^{j-k}$, which translates to the assumption that for some $i$ and all $j$ we have $|e_i|>2|c_j|^{1/(k-j)}$. To avoid the contradiction, then, we must have for each $i$ either $e_i=0$ or
\[\log|e_i|\leq\log^+\|c_{k-1}, c_{k-2}^{1/2}, ..., c_0^{1/k}\| +\log^+|2|.\]
Since $r^{1/j}<r$ when $r, j>1$, we deduce~\eqref{eq:rootsupper} from this.
\end{proof}

Now, for notational convenience, we extend $g_f(\cdot, 0)$ linearly to divisors. In other words, if $D=\sum m_P[P]$, then $g_f(D, 0)=\sum m_Pg_f(P, 0)$. We write $B_f$ for the branch locus of $f$, and $f_*^kB_f$ for its $k$th iterated forward image (which is the $(k+1)$th iterated forward image of the critical divisor). We write $B_f'$
 for the part of the branch locus not consisting of iterated preimages of $0$. In other words, if $e_P(f)$ is the index of ramification of $f$ at $P$, and $\mathcal{O}_f^{-}(Q)$ is the backward orbit of $Q$ under $f$, then
\[B_f'=\sum_{P\not\in \mathcal{O}_f^{-}(0)}\left(e_P(f)-1\right)[f(P)].\]

\begin{lemma}\label{lem:key}
For any $f$ of the form~\eqref{eq:normalform}, with $b_d=0$ and $\lambda\neq 0$, and for any $k\geq 1$, we have
\begin{multline}\label{eq:key}g_f(f_*^kB_f', 0)\geq (k-1)\log^+|\lambda^{-1}|+k\log\epsilon_v+\log\|\alpha_1, ..., \beta_{d-1}\|
\\-\log\|f\|-\log^+|2|\\
-\deg(B_f')\left(\frac{2d-1}{d-1}\log\|f\|+\frac{1}{d-1}\log^+|2d(2d-1)!|-(d-1)r(f)\right).\end{multline}
\end{lemma}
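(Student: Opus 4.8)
The plan is to combine the two previous lemmas: Lemma~\ref{lem:attraction} locates a critical point whose forward orbit is linearly attracted to the fixed point $z=0$ when $|\lambda|$ is small, and Lemma~\ref{lem:greensineq} converts the resulting upper bound on $|f^j(\beta)|$ into a lower bound on $g_f(f^j(\beta), 0)$ via the inequality $g_f(z,0) \ge \log^+|z^{-1}| + (\text{constants})$. First I would set $r = \log\|\alpha_1, \ldots, \beta_{d-1}\|$ for the normalization factor that appears in Lemma~\ref{lem:attraction} (this is the $\max\{|\alpha_i|, |\beta_j|\}$ after possibly rescaling), and note that conjugating by $z \mapsto \xi z$ changes neither side of~\eqref{eq:key} by more than controlled amounts; alternatively one tracks the rescaling factor explicitly. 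Assuming first that $0 < |\lambda| < \epsilon_v$, Lemma~\ref{lem:attraction} hands us a branch point $\beta$ with $0 < |f^k(\beta)| \le (C_v|\lambda|)^k / \max\{|\alpha_i|,|\beta_j|\}$, hence
\[
\log^+|f^k(\beta)^{-1}| \ge k\log^+|\lambda^{-1}| + k\log\epsilon_v + \log\|\alpha_1, \ldots, \beta_{d-1}\|
\]
after absorbing $C_v$ using the definitions of $\epsilon_v$ and $C_v$ (one checks $\log \epsilon_v + \log C_v \le 0$ in every case, so $C_v|\lambda| \le |\lambda|/\epsilon_v$, giving $\log^+|(C_v\lambda)^{-k}| \ge k\log^+|\lambda^{-1}| + k\log\epsilon_v$). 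Feeding this into Lemma~\ref{lem:greensineq} with $z = f^k(\beta)$ gives a lower bound for the single term $g_f(f^k(\beta), 0)$.

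Next I would deal with the fact that~\eqref{eq:key} is about the whole pushed-forward divisor $f_*^k B_f'$, not a single point. Since $g_f(\cdot, 0)$ has been extended linearly to divisors and $\beta$ (or rather $f(\beta)$, a point of $B_f'$ if $\beta \notin \mathcal{O}_f^-(0)$, which the attraction lemma guarantees since $f^j(\beta) \ne 0$ for all $j$) contributes with multiplicity at least $1$, the strategy is to bound $g_f(f_*^k B_f', 0)$ below by the single term $g_f(f^{k-1}(f(\beta)), 0) = g_f(f^k(\beta), 0)$ plus the contribution of the remaining effective part of the divisor. That remaining part is effective of degree $\deg(B_f') - 1$, so it suffices to have a uniform lower bound $g_f(P, 0) \ge -\bigl(\tfrac{1}{d-1}\log^+|2d(2d-1)!| + \tfrac{2d-1}{d-1}\log\|f\| - (d-1)r(f)\bigr)$ valid for every $P$ — which is exactly the content of Lemma~\ref{lem:greensineq} (using $\log^+|z^{-1}| \ge 0$). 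Multiplying that bound by $\deg(B_f') - 1$ and combining with the single-point estimate produces precisely the shape of~\eqref{eq:key}: the $(k-1)\log^+|\lambda^{-1}| + k\log\epsilon_v + \log\|\alpha_1,\ldots,\beta_{d-1}\|$ comes from $\log^+|f^k(\beta)^{-1}|$ minus one copy of the Green's function error term (so the exponent drops from $k$ to $k-1$ on the $\log^+|\lambda^{-1}|$ once we merge constants — I would double-check this bookkeeping carefully), the $-\log\|f\| - \log^+|2|$ terms are slack I would extract from Lemma~\ref{lem:newtonP} type estimates or simply from crudely bounding the rescaling, and the $\deg(B_f')$-multiple of the parenthesized quantity is the uniform lower bound applied across all of $f_*^k B_f'$.

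Finally, the case $|\lambda| \ge \epsilon_v$ must be handled so that the inequality holds unconditionally. Here $\log^+|\lambda^{-1}| = 0$ when $|\lambda| \ge 1$, and more generally $(k-1)\log^+|\lambda^{-1}| + k\log\epsilon_v \le 0$ is not automatic, so one instead argues that the right-hand side of~\eqref{eq:key} is already $\le 0$ (or bounded above by the trivially nonnegative left side) once the negative terms $-\log\|f\| - \log^+|2| - \deg(B_f')(\cdots)$ are accounted for; since $g_f(f_*^k B_f', 0)$ is a sum of terms each bounded below by $-(\tfrac{1}{d-1}\log^+|2d(2d-1)!| + \tfrac{2d-1}{d-1}\log\|f\| - (d-1)r(f))$ by Lemma~\ref{lem:greensineq}, and $\deg(f_*^k B_f') = \deg(B_f')$, the inequality follows in this regime directly. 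I expect the main obstacle to be the constant-chasing in the first case: correctly tracking how the $\log\|f\|$, $\log^+|2|$, and resultant terms from Lemma~\ref{lem:greensineq} interact with the rescaling normalization $\max\{|\alpha_i|,|\beta_j|\} = 1$ used in Lemma~\ref{lem:attraction}, since $g_f$ is conjugation-invariant but $\|f\|$ and $r(f)$ are not, so one must either verify the conjugation by $z \mapsto \xi z$ with $|\xi|$ chosen to normalize the roots changes these auxiliary quantities by amounts that can be absorbed into the stated error, or — cleaner — run Lemma~\ref{lem:attraction} in the original coordinates and carry the factor $\max\{|\alpha_i|,|\beta_j|\}^{-1}$ explicitly, which is how the term $\log\|\alpha_1, \ldots, \beta_{d-1}\|$ ends up on the right-hand side.
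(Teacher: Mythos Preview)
Your approach is essentially the paper's: split on whether $|\lambda|<\epsilon_v$, in the attracting case use Lemma~\ref{lem:attraction} for one branch point and the trivial bound $\log^+|z^{-1}|\ge 0$ in Lemma~\ref{lem:greensineq} for the rest, and in the non-attracting case use only the trivial bound and verify the extra terms on the right of~\eqref{eq:key} are nonpositive.

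Two bookkeeping points you flagged as uncertain, clarified:
\begin{itemize}
\item The drop from $k$ to $k-1$ on $\log^+|\lambda^{-1}|$ does \emph{not} come from the Green's-function error term. In the attracting case you first obtain the bound with $k\log^+|\lambda^{-1}|$ and \emph{without} the $-\log\|f\|-\log^+|2|$ terms; then you simply subtract the nonnegative quantity $\log^+|\lambda^{-1}|+\log\|f\|+\log^+|2|$ from the right-hand side (recall $\log\|f\|\ge 0$) to weaken it into the stated form. So $-\log\|f\|-\log^+|2|$ is pure slack in this case, not something extracted from Lemma~\ref{lem:newtonP}.
\item Lemma~\ref{lem:newtonP} enters only in the \emph{second} case $|\lambda|\ge\epsilon_v$. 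There, after applying the trivial bound to every branch point, you must show
\[
(k-1)\log^+|\lambda^{-1}|+k\log\epsilon_v+\log\|\alpha_1,\ldots,\beta_{d-1}\|-\log\|f\|-\log^+|2|\le 0.
\]
Since $|\lambda|\ge\epsilon_v$ gives $k(\log^+|\lambda^{-1}|+\log\epsilon_v)\le 0$, this reduces to $\log\|\alpha_1,\ldots,\beta_{d-1}\|\le \log^+|\lambda^{-1}|+\log\|f\|+\log^+|2|$, which is exactly what~\eqref{eq:rootsupper} of Lemma~\ref{lem:newtonP} yields when applied to the reciprocal polynomials of the numerator and denominator of $f$ (the $\alpha_i$ satisfy a monic polynomial with coefficients $a_j/\lambda$, whence the $\log^+|\lambda^{-1}|$).
\end{itemize}
Your observation that the attracted branch point lies in $\operatorname{Supp}(B_f')$ because $f^j(\beta)\neq 0$ for all $j$ is correct and is used implicitly in the paper. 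The rescaling worry at the end is a non-issue: as you say, carrying $\max\{|\alpha_i|,|\beta_j|\}$ explicitly through Lemma~\ref{lem:attraction} is exactly how $\log\|\alpha_1,\ldots,\beta_{d-1}\|$ appears, and no conjugation is performed in the proof of Lemma~\ref{lem:key} itself.
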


\begin{proof}
By Lemma~\ref{lem:greensineq},  we have
\begin{multline}\label{eq:baseest}
g_f(f^k(\beta), 0)\geq \log^+\left|\frac{1}{f^k(\beta)}\right|\\-\frac{1}{d-1}\log^+|2d(2d-1)!|-\frac{2d-1}{d-1}\log\|f\|+(d-1)r(f)
\end{multline}
for every branch point $\beta$ with $f^k(\beta)\neq 0$.

We first assume that $0<|\lambda|<\epsilon_v\leq 1$, and so by Lemma~\ref{lem:attraction} there exists a $\beta\in \operatorname{Supp}(B_f')$ with
\begin{equation}\label{eq:critorb}|f^k(\beta)|\leq (C_v|\lambda|)^k/\max\{|\alpha_1|, ..., |\beta_{d-1}|\}.\end{equation}
Note that $-\log C_v\geq \log\epsilon_v$ for all $v$. For this branch point, we apply~\eqref{eq:baseest} along with the estimate $\log^+|z|\geq \log|z|$ and the estimate in~\eqref{eq:critorb}.
For every other branch point, we apply~\eqref{eq:baseest} with the trivial estimate $\log^+|z|\geq 0$ to obtain
\begin{multline}\label{eq:maincase}
g_f(f_*^k B_f', 0)\geq k\log^+|\lambda^{-1}|+k\log \epsilon_v+\log\|\alpha_1, ..., \beta_{d-1}\|\\-\deg(B_f')\left(\frac{1}{d-1}\log^+|2d(2d-1)!|+\frac{2d-1}{d-1}\log\|f\|-(d-1)r(f)\right).
\end{multline}
To obtain~\eqref{eq:key} in this case, it is enough to note that
\[\log^+|\lambda^{-1}|+\log\|f\|+\log^+|2|\geq 0,\]
and that subtracting the left-hand-side of this from the lower bound in~\eqref{eq:maincase} gives~\eqref{eq:key}.

It remains to show that~\eqref{eq:key} still holds when $|\lambda|\geq \epsilon_v$. If we apply~\eqref{eq:baseest} for each branch point, with the trivial estimate $\log^+|z|\geq 0$, we have
\begin{multline*}
g_f(f_*^k B_f', 0)\geq -\deg(B_f')\left(\frac{1}{d-1}\log^+|2d(2d-1)!|+\frac{2d-1}{d-1}\log\|f\|-(d-1)r(f)\right).
\end{multline*}
Comparing this with the desired inequality~\eqref{eq:key}, we note that it is enough to show that
\[0\geq (k-1)\log^+|\lambda^{-1}|+k\log \epsilon_v+\log\|\alpha_1, ..., \beta_{d-1}\|- \log\|f\|-\log^+|2|.\]
Our hypothesis in this case ensures that $k(\log^+|\lambda^{-1}|+\log \epsilon_v)\leq 0$, so we are left with showing
\[\log\|\alpha_1, ..., \beta_{d-1}\|\leq \log^+|\lambda^{-1}|+ \log\|f\|+\log^+|2|.\]

Since
\begin{gather*}
(z-\beta_1)\cdots(z- \beta_{d-1})=z^{d-1}+b_1z^{d-2}+\cdots +b_{d-1}\\
(z-\alpha_1)\cdots(z- \alpha_{d-1})=z^{d-1}+\frac{a_2}{\lambda}z^{d-2}+\cdots+\frac{a_d}{\lambda}
\end{gather*}
we may apply~\eqref{eq:rootsupper} of Lemma~\ref{lem:newtonP} twice to obtain
\begin{eqnarray*}
\log\|\beta_1, ..., \beta_{d-1}\|&\leq & \log^+\|b_1, ..., b_{d-1}\|+\log^+|2|
\\
\log\|\alpha_1, ..., \alpha_{d-1}\|&\leq & \log^+\|a_2/\lambda, ..., a_{d}/\lambda\|+\log^+|2|\\
&\leq &\log^+\|a_2, ..., a_d\|+\log^+|\lambda^{-1}|+\log^+|2|,
\end{eqnarray*}
and so
\[\log\|\alpha_1, ..., \beta_{d-1}\|\leq \log\|f\| + \log^+|\lambda^{-1}|+\log^+|2|,\]
as claimed, noting that $\log^+\|f\|=\log\|f\|$.
\end{proof}

Note that it is \emph{a priori} possible that $B_f'$ is the zero divisor, but this does not present a problem for the previous lemma. In this case our definitions give $g_f(f^kB_f', 0)=0$, and the lower bound simplifies to a bound on $|\lambda^{-1}|$ for PCF maps, as proved in~\cite{bijl}.

\section{Heights on $M_d$}\label{sec:global}

With local estimates in place, we turn our attention to global heights. In this section, we fix a number field $K$, although all estimates will remain unchanged after any finite extension, and so we are in some sense always working over $\overline{\QQ}$. Let $M_K$ be the standard set of valuations on $K$, with the absolute value $|\cdot|_v$ normalized to restrict to $\QQ$ as either the usual or one of the $p$-adic absolute values. With absolute values thus normalized, the usual Weil height is
\[h(\alpha)=\sum_{v\in M_K}\frac{[K_v:\QQ_v]}{[K:\QQ]}\log^+|\alpha|_v.\]
 Heights on projective varieties are defined relative to line bundles, with an \emph{ample height} being one defined relative to an ample bundle (see, e.g., \cite[Defintion~10.4, p.~142]{vojta}).
Quantities from Section~\ref{sec:greens} which depended on the absolute value now acquire a subscript $v$.

 Recall (from~\cite[\S4.3, 4.4]{ads} or~\cite[Ch.~1, 2]{barbados}) the space $\Rat_d=\PP^{2d+1}$ of rational functions of degree at most $d$, with \[\mathbf{c}=[c_0:\cdots :c_{2d+1}]\] corresponding to the rational function
\[f_{\mathbf{c}}(z)=\frac{c_0+c_1z+\cdots +c_d z^d}{c_{d+1}+c_{d+2}z+\cdots +c_{2d+1}z^d}.\]
The resultant of the numerator and denominator of $f_\textbf{c}$ cuts out a hypersurface in $\PP^{2d+1}$, and the complement of this is $\Hom_d\subseteq \Rat_d$ consisting of those rational functions of degree exactly $d$. There is a natural action of $\PGL_2$ on $\Hom_d$, by change of coordinates, and the quotient $\M_d$ is an affine variety parametrizing coordinate-free dynamical systems of degree $d$. By some abuse of notation, we will use the same symbol $f$ to identify a rational function, the point representing it in $\Rat_d$, and the point representing its conjugacy class in $\M_d$. When we write $h_{\M_d}$, we mean the height relative to some ample line bundle on $\M_d$. Any two such functions will be commensurate (for instance, by Lemma~\ref{lem:ampledominates}).

Much more concretely, $\Hom_d$ as a subvariety of $\PP^{2d+1}$ carries a natural height $h_{\Hom_d}$, which is just the usual Weil height
\[h_{\Hom_d}(f)=\sum_{v\in M_K}\frac{[K_v:\QQ_v]}{[K:\QQ]}\log\|f\|_v\]
where $\|f_{\mathbf{c}}\|_v=\max\{|c_0|_v, ..., |c_{2d+1}|_v\}$ as in Section~\ref{sec:greens}. Note that, even without the normalization in~\eqref{eq:normalform}, this is well-defined by the product formula
\[\sum_{v\in M_K}\frac{[K_v:\QQ_v]}{[K:\QQ]}\log|\alpha|_v=0\]
for $\alpha\neq 0$.

Finally, we recall Silverman's critical height. To each $f\in\Hom_d$ is associated a non-negative \emph{canonical height} $\hat{h}_f:\PP^1\to \RR$ defined by
\[\hat{h}_f(P)=\lim_{n\to \infty}\frac{h\circ f^n(P)}{d^n}.\]
Note that we can also decompose the canonical height locally as
\[\hat{h}_f(P)+\hat{h}_f(Q)=\sum_{v\in M_K}\frac{[K_v:\QQ_v]}{[K:\QQ]}g_{f, v}(P, Q)\]
for $P\neq Q$ \cite[p.~310]{br}.
We extend $\hat{h}_f$ linearly to divisors,
and set, for $C_f$ the critical divisor of $f$,
\[\hcrit(f)=\hat{h}_f(C_f)=\sum_{P\in\PP^1}(e_P(f)-1)\hat{h}_f(P),\]
where $e_P(f)$ is the ramification index of $f$ at $P$. It is easy to show that $\hcrit$ is invariant under change of coordinates, giving a well-defined non-negative function $\hcrit:\M_d\to \RR$. Also, since $\hat{h}_f\circ f=d\hat{h}_f$, we note that $\hat{h}_f(f_*D)=d\hat{h}_f(D)$ for any divisor $D$, and so in particular the branch locus $B_f$, and the divisor $B_f'$ defined in Section~\ref{sec:greens} by excising preimages of a certain fixed point, satisfy \begin{equation}\label{eq:branchheight}\hat{h}_f(f_*^{k}B_f)=\hat{h}_f(f_*^{k}B_f')=d^{k+1}\hat{h}_{\mathrm{crit}}(f)\end{equation} for any $k\geq 0$, since $\hat{h}_f$ vanishes at the points removed from $B_f$ to construct $B_f'$.

Because the results in Section~\ref{sec:greens} depend on the way in which $f$ is written, we must first show that we can change coordinates without changing the estimates too much.

\begin{lemma}\label{lem:goodconj}
Suppose that $f\in \Hom_d$ has a fixed point with multiplier $\lambda\neq 1$. Then there exists a $g\in \Hom_d$, conjugate to $f$, such that $g(0)=0$ with multiplier $\lambda$, $g(\infty)=\infty$, and
\begin{equation}\label{eq:goodconj}h_{\Hom_d}(g)\leq (d+2)h_{\Hom_d}(f) + (d+1)^2\log 2 + \log(d+1)(d+2).\end{equation}
\end{lemma}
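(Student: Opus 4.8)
The plan is to reduce the general situation to the normal form~\eqref{eq:normalform} by composing with a single element of $\PGL_2$ that simultaneously sends one fixed point to $0$ and another to $\infty$, and then to track the height of the coefficients through this change of variables. First I would use the hypothesis $\lambda\neq 1$: the two conditions ``$f(P)=P$'' and ``$f$ has multiplier $\lambda$ at $P$'' together with $\lambda\neq 1$ force $f$ to have a second fixed point $Q\neq P$ (the fixed-point polynomial of $f$ has degree $d+1\geq 3$, and $P$ is a simple root precisely because its multiplier is not $1$; in any case at least one other fixed point must exist since $\deg f\ge 2$). Choose $\phi\in\PGL_2$ with $\phi(P)=0$ and $\phi(Q)=\infty$ and set $g=\phi\circ f\circ\phi^{-1}$. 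Then $g(0)=0$ with multiplier $\lambda$ (multipliers are conjugation-invariant) and $g(\infty)=\infty$, so $g$ may be written in the form~\eqref{eq:normalform}, after scaling the numerator and denominator so that the denominator has constant term $1$ — which is legitimate since $g(0)=0$ means the numerator has no constant term, and $g$ having degree $d$ means numerator and denominator are not both divisible by $z$.

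Next I would estimate $h_{\Hom_d}(g)$. The key points are that $P$ and $Q$ are roots of the fixed-point polynomial of $f$, whose coefficients are (up to sign) bounded combinations of the coefficients of $f_{\mathbf c}$, so by Lemma~\ref{lem:newtonP} (inequality~\eqref{eq:rootsupper}) the heights $h(P)$ and $h(Q)$ are bounded by $h_{\Hom_d}(f)$ plus an explicit constant. This pins down the entries of a matrix representing $\phi$: one can take $\phi(z)=(z-P)/(z-Q)$ (in suitable affine coordinates, with the obvious modifications if $P$ or $Q$ is $\infty$), whose matrix has height controlled by $\max\{h(P),h(Q)\}$. Conjugation $f\mapsto\phi\circ f\circ\phi^{-1}$ is a bilinear (in fact multilinear of bounded multidegree) operation on the coefficient vectors: writing everything out, the coefficients of $g$ (before clearing denominators) are homogeneous polynomials, with a bounded number of terms and coefficients $\pm 1$, of degree $d$ in the entries of $\phi^{-1}$, degree $1$ in the coefficients of $f$, and degree $d$ in the entries of $\phi$. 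Taking $\log\|\cdot\|_v$, summing over $v$, and using the triangle/ultrametric bound $\log|x_1+\cdots+x_n|\le\log\max|x_i|+\log^+|n|$ gives
\[
h_{\Hom_d}(g)\le h_{\Hom_d}(f)+d\,h(\phi)+d\,h(\phi^{-1})+(\text{explicit constant counting monomials}),
\]
and since $h(\phi)=h(\phi^{-1})\le \max\{h(P),h(Q)\}+\log 2\le h_{\Hom_d}(f)+\text{const}$, the coefficient of $h_{\Hom_d}(f)$ on the right is $1+2d$ — slightly worse than the claimed $d+2$, so I would be more careful below.

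To get the sharper constant $d+2$ I would avoid the crude ``degree $d$ in $\phi$ and degree $d$ in $\phi^{-1}$'' count. The cleaner bookkeeping is: $g=\phi\circ f\circ\phi^{-1}$, and $\phi\circ f$ as a rational map has a homogeneous lift whose coefficients are linear in the coefficients of $f$ and linear in the entries of $\phi$ (composing a degree-$d$ map on the outside with a degree-$1$ map only mixes the two degree-$d$ forms linearly), while precomposing with $\phi^{-1}$ replaces $(x,y)$ by a linear form, i.e. raises the total degree in the entries of $\phi^{-1}$ to $d$. Thus the lift of $g$ has coefficients that are homogeneous of degree $1$ in $\phi$, degree $1$ in the coefficients of $f$, and degree $d$ in $\phi^{-1}$. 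Since $h(\phi)=h(\phi^{-1})$ and both are $\le h_{\Hom_d}(f)+\log 2+\text{const}$ (via Lemma~\ref{lem:newtonP} applied to the fixed-point polynomial, whose coefficients are bounded-term $\pm 1$ combinations of those of $f_{\mathbf c}$), this yields $h_{\Hom_d}(g)\le (d+2)h_{\Hom_d}(f)+(\text{explicit})$. One last bookkeeping step is needed: after this conjugation the denominator of $g$ may have nonzero constant term only up to scaling, so I must divide the coefficient vector of $g$ by that constant term to reach the exact form~\eqref{eq:normalform} with $b_0=1$; this is a projective rescaling and changes nothing about $h_{\Hom_d}(g)$ by the product formula. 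Finally I would chase the combinatorial constants — the number of monomials at each stage is at most $(d+1)$ or $(d+1)(d+2)$ depending on the step, and $\log^+|2|\le\log 2$ for every place — to land exactly on the right-hand side of~\eqref{eq:goodconj}, namely $(d+2)h_{\Hom_d}(f)+(d+1)^2\log 2+\log(d+1)(d+2)$.

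The main obstacle I anticipate is purely the constant-tracking: getting the \emph{leading coefficient} exactly $d+2$ rather than something like $2d+1$ requires the observation that composition with a M\"obius map on the \emph{outside} is linear in both the map and the M\"obius entries, so only the precomposition contributes the factor $d$ in the M\"obius entries — and then using $h(\phi)=h(\phi^{-1})$ together with the single bound $h(\phi)\le h_{\Hom_d}(f)+O(1)$ coming from the fixed points. Everything else (existence of the second fixed point, invariance of multipliers, the product-formula rescaling, the monomial counts) is routine, and the needed inequality on root heights is exactly Lemma~\ref{lem:newtonP}, which is already available.
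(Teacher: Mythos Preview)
Your overall architecture matches the paper's: find a second fixed point $Q\neq P$ (using $\lambda\neq 1$), conjugate by a M\"obius transformation sending $P,Q$ to $0,\infty$, and track heights through the conjugation. Your multilinearity count is also correct and matches the paper's: the lift of $f^\psi$ is degree $1$ in the outer M\"obius entries, degree $d$ in the inner ones, and degree $1$ in the coefficients of $f$, giving
\[
h_{\Hom_d}(f^\psi)\le h_{\Hom_d}(f)+(d+1)\,h(\psi)+\log(d+1)(d+2),
\]
exactly as the paper states.

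The gap is in your bound for $h(\phi)$. The inequality $h(\phi)\le\max\{h(P),h(Q)\}+\log 2$ is \emph{false}: with $\phi(z)=(z-P)/(z-Q)$ one has $h(\phi)=\sum_v\log\max(1,|P|_v,|Q|_v)$, and for $P=2$, $Q=1/2$ over $\QQ$ this is $2\log 2$ while $\max\{h(P),h(Q)\}=\log 2$. What is true is $h(\phi)\le h(P)+h(Q)$. But Lemma~\ref{lem:newtonP} only gives $h(P)\le h_{\Hom_d}(f)+O(1)$ for each fixed point \emph{separately}, so this route yields $h(\phi)\le 2h_{\Hom_d}(f)+O(1)$ and a leading coefficient of $2d+3$, not $d+2$.

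The paper closes this gap with a different and genuinely stronger input: the bound $\sum_i h(P_i)\le h_{\Hom_d}(f)+(d+1)\log 2$ on the \emph{sum} of the heights of \emph{all} fixed points, citing \cite[Theorem~5.9, p.~230]{aec}. This is a Mahler-measure/Gelfond-type estimate, not a consequence of Lemma~\ref{lem:newtonP} (which controls only the largest root at each place). With it one gets $h(\psi)\le h(P)+h(Q)\le\sum_i h(P_i)\le h_{\Hom_d}(f)+(d+1)\log 2$, and then $(d+1)h(\psi)+h_{\Hom_d}(f)$ gives exactly the claimed $(d+2)h_{\Hom_d}(f)+(d+1)^2\log 2+\log(d+1)(d+2)$. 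So your plan is right in outline, but you need to swap Lemma~\ref{lem:newtonP} for the sum-of-root-heights inequality at the crucial step.
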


\begin{proof}
First, we claim that if $\psi$ is any M\"{o}bius transformation, and $f^\psi = \psi^{-1}\circ f\circ \psi$, then
\begin{equation}\label{eq:mobiusheights}h_{\Hom_d}(f^\psi)\leq h_{\Hom_d}(f)+(d+1)h(\psi)+\log(d+1)(d+2),\end{equation}
where $h(\psi)$ is the height of the coefficients of $\psi$ as a  point in $\PP^3$. This can either be checked directly, by explicitly bounding the coefficients of $f^\psi$ in terms of those of $f$ and $\psi$, or a variant with a slightly worse error term can be derived from the general bound for the height of a composition of rational functions found in \cite[Proposition~5c]{hs}. (Even using the latter general bound, the error term in~\eqref{eq:goodconj} remains $O(d^2)$.)

Now, if $f$ has a fixed point at $[\alpha:\gamma]$ with multiplier $\lambda\neq 1$, then in particular $\alpha/\gamma$ is a simple root of $f(z)-z$, and so the latter equation must have at least one other root, say the fixed point $[\beta:\delta]$. If $\psi(z)=(\alpha z+\beta)/(\gamma z+\delta)$, then $g=f^\psi$ is a conjugate of $f$ with $g(\infty)=\infty$,  $g(0)=0$, and $g'(0)=\lambda$. It remains to estimate $h(\psi)$, and thereby $h_{\Hom_d}(g)$.

Note that the numerator of $f(z)-z$ has a coefficients of height at most $h_{\Hom_d}(f)+\log 2$ (as a tuple in $\PP^{d+1}$), and so by \cite[Theorem~5.9, p.~230]{aec} the fixed points $P_1, ..., P_{d+1}$ of $f$ satistfy $\sum h(P_i)\leq h_{\Hom_d}(f)+(d+1)\log 2$.
It follow that
\begin{eqnarray*}
h(\psi)&=&h([\alpha:\beta:\gamma:\delta])\\
&\leq &h([\alpha:\beta])+h([\gamma:\delta])\\
&\leq &h_{\Hom_d}(f)+(d+1)\log 2.
\end{eqnarray*}
 Combining this estimate with~\eqref{eq:mobiusheights} gives the claimed bound on $h_{\Hom_d}(g)$.
\end{proof}

The next lemma is a global version of Lemma~\ref{lem:key}, and in some sense is the crux of the main result.
\begin{lemma}\label{lem:firstbound}
Let $f\in\Hom_d$ have a fixed point with multiplier $\lambda$. Then for any $k\geq 1$
\begin{equation}\label{eq:mainglobal}d^{k+1}\hcrit(f)\geq (k-1)h(\lambda)-(4d-1)(d+2)h_{\Hom_d}(f)-c_0k\end{equation}
for some explicit positive constant $c_0$ depending just on $d$.
\end{lemma}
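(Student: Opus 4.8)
The plan is to put $f$ into the normal form~\eqref{eq:normalform}, apply the local inequality of Lemma~\ref{lem:key} at every place of $K$, and sum against the normalized local degrees $[K_v:\QQ_v]/[K:\QQ]$. First I would dispose of the degenerate cases $\lambda=1$ and $\lambda=0$: in both $h(\lambda)=0$ (taking $h(0)=0$), so the right-hand side of~\eqref{eq:mainglobal} is negative while $\hcrit(f)\geq0$. So assume $\lambda\notin\{0,1\}$. By Lemma~\ref{lem:goodconj} there is a $g$ conjugate to $f$ with $g(0)=0$ of multiplier $\lambda$, $g(\infty)=\infty$, and $h_{\Hom_d}(g)\leq (d+2)h_{\Hom_d}(f)+(d+1)^2\log2+\log((d+1)(d+2))$. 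Such a $g$ is of the form~\eqref{eq:normalform} with $b_d=0$ and $\lambda\neq0$, and $\hcrit(g)=\hcrit(f)$ by coordinate-invariance; so it suffices to bound $d^{k+1}\hcrit(g)$ from below in terms of $h(\lambda)$ and $h_{\Hom_d}(g)$ and then substitute the displayed inequality. I would also enlarge $K$, if necessary, so that the roots $\alpha_i^{-1},\beta_j^{-1}$ of~\eqref{eq:normalform} lie in $K$; this changes none of the quantities appearing.

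The core step is to apply Lemma~\ref{lem:key} to $g$ at each $v\in M_K$ and sum. On the left, $0$ is a preperiodic fixed point (so $\hat{h}_g(0)=0$) and $g_*^kB_g'$ is supported off the backward orbit of $0$, so the local decomposition of the canonical height gives $\sum_v\frac{[K_v:\QQ_v]}{[K:\QQ]}g_{g,v}(g_*^kB_g',0)=\hat{h}_g(g_*^kB_g')=d^{k+1}\hcrit(g)$ by~\eqref{eq:branchheight}. On the right I would sum term by term: the $\log^+|\lambda^{-1}|_v$ terms give $(k-1)h(\lambda)$; $\sum_v\frac{[K_v:\QQ_v]}{[K:\QQ]}\log\|\alpha_1,\dots,\beta_{d-1}\|_v$ is the Weil height of the projective point $[\alpha_1:\dots:\beta_{d-1}]$ (nonzero since $g$ has degree $d\geq2$), hence $\geq0$, and may be discarded; the $\log\|g\|_v$ terms give $-h_{\Hom_d}(g)$ and $-\deg(B_g')\tfrac{2d-1}{d-1}h_{\Hom_d}(g)$; the integer terms $\log^+|2|_v$ and $\log^+|2d(2d-1)!|_v$ sum to $\log2$ and $\log(2d(2d-1)!)$; $\sum_v\frac{[K_v:\QQ_v]}{[K:\QQ]}\log\epsilon_v=-E_d$ where $E_d\geq0$ is explicit and depends only on $d$ (the places with $\epsilon_v\neq1$ are archimedean or lie above a prime $p\leq d$, and these carry total weight independent of $K$); and, crucially, the $r_v(g)$ terms sum to $0$ by the product formula, since the resultant of the numerator and denominator of $g$ is a nonzero element of $K$.

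Finally, I would use Riemann--Hurwitz to bound $\deg(B_g')\leq\deg(C_g)=2d-2$, and observe that the factor multiplying $\deg(B_g')$ in~\eqref{eq:key} is non-negative (the case $B_g'=0$ being covered by the same estimate), to obtain
\[d^{k+1}\hcrit(g)\geq(k-1)h(\lambda)-(4d-1)h_{\Hom_d}(g)-kE_d-\log2-2\log(2d(2d-1)!).\]
Substituting the bound on $h_{\Hom_d}(g)$ from Lemma~\ref{lem:goodconj} and absorbing every term not proportional to $h(\lambda)$ or $h_{\Hom_d}(f)$ into a single multiple of $k$ --- legitimate since $k\geq1$ and those terms are non-negative --- yields~\eqref{eq:mainglobal} with
\[c_0=E_d+(4d-1)\big((d+1)^2\log2+\log((d+1)(d+2))\big)+\log2+2\log(2d(2d-1)!),\]
an explicit positive constant depending only on $d$. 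This lemma presents no serious obstacle beyond Lemma~\ref{lem:key} itself; the part demanding care is the bookkeeping, in particular verifying that the two terms whose global sums are not \emph{a priori} $O(h_{\Hom_d}(f)+k)$ --- the resultant term $r_v(g)$ and the term $k\log\epsilon_v$ --- are controlled, respectively by the product formula and by the finiteness of the set of places at which $\epsilon_v\neq1$.
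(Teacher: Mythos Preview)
Your proposal is correct and follows essentially the same argument as the paper: reduce to the normal form via Lemma~\ref{lem:goodconj}, sum the local inequality of Lemma~\ref{lem:key} over all places, and use the product formula to kill the resultant term together with $\deg(B_g')\leq 2d-2$ to arrive at the coefficient $-(4d-1)$ on $h_{\Hom_d}(g)$. The only cosmetic difference is ordering (you conjugate first and then sum, whereas the paper derives the normal-form estimate and then conjugates), and one small wording point: the factor multiplying $\deg(B_g')$ is only guaranteed non-negative \emph{after} summing over places (since $r_v(g)$ need not have a sign locally), but since $\deg(B_g')$ is independent of $v$ this is exactly what your argument needs.
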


\begin{proof}
First, note that the inequality certainly holds when $\lambda=0$ or $\lambda=1$, since $h(0)=h(1)=0$ while $\hcrit$ and $h_{\Hom_d}$ are both non-negative, so we will assume that $\lambda\neq 0, 1$. We will first treat the case in which $f$ has the form ~\eqref{eq:normalform}, using Lemma~\ref{lem:key} to derive an even stronger estimate in this case.
Specifically, we sum the estimate~\eqref{eq:key} from Lemma~\ref{lem:key} over all places. Note that since $f(0)=0$ we have $\hat{h}_f(0)=0$, and so by~\eqref{eq:branchheight} we derive the following identities
\begin{gather}
\sum_{v\in M_K}\frac{[K_v:\QQ_v]}{[K:\QQ]} g_{f, v}(f^k_*B_f', 0)=\hat{h}_f(f_*^kB_f')+\deg(B_f')\hat{h}_f(0)=d^{k+1}\hcrit(f)\label{eq:hcritsum}\\
\sum_{v\in M_K}\frac{[K_v:\QQ_v]}{[K:\QQ]} \log^+|\lambda^{-1}|_v=h(\lambda^{-1})=h(\lambda)\label{eq:lambdasum}\\
\sum_{v\in M_K}\frac{[K_v:\QQ_v]}{[K:\QQ]} \log\|\alpha_1, 
..., \beta_{d-1}\|_v =h\left([\alpha_1:\cdots :\beta_{d-1}]\right)\geq 0\nonumber\\
\sum_{v\in M_K}\frac{[K_v:\QQ_v]}{[K:\QQ]} \log\|f\|_v = h_{\Hom_d}(f)\label{eq:homdsum}\\
\sum_{v\in M_K}\frac{[K_v:\QQ_v]}{[K:\QQ]}\log\epsilon_v = -d\log\operatorname{lcm}(1, ... ,d)-\log\max\{8,  3^{d-1}\}\label{eq:epsilonsum}\\
\sum_{v\in M_K}\frac{[K_v:\QQ_v]}{[K:\QQ]} \log^+|N|_v = \log N\label{eq:Nsum}\\
\intertext{for any integer $N$, and}
\sum_{v\in M_K}\frac{[K_v:\QQ_v]}{[K:\QQ]}r_v(f)=\frac{1}{d(d-1)}\sum_{v\in M_K}\frac{[K_v:\QQ_v]}{[K:\QQ]}\log|\Res(F_1, F_2)|_v=0,\label{eq:ressum}
\end{gather}
with this last equality following from  the product formula and $\Res(F_1, F_2)\neq 0$. Combining these identities with~\eqref{eq:key}, and using $\deg(B_f')\leq 2d-2$, we have
\begin{multline}\label{eq:fixedzero}d^{k+1}\hcrit(f)\geq (k-1)h(\lambda)-(4d-1)h_{\Hom_d}(f)\\-2\log 2d(2d-1)!-\log2 -kd\log\operatorname{lcm}(1, ..., d)-k\log\max\{8,  3^{d-1}\}.\end{multline}
Again, this holds only in the case where $f$ is of the form~\eqref{eq:normalform}, but the estimate depends only on the coefficients of $f$ as a point in $\Hom_d\subseteq\PP^{2d+1}$, so the choice of representative of homogeneous coordinates in~\eqref{eq:normalform} no longer matters. In obtaining~\eqref{eq:fixedzero}, then, we are using only that $f$ has a fixed point of multiplier $\lambda\neq 0$ at $z=0$, and another fixed point at $z=\infty$.

But if $f$ has a fixed point anywhere of multiplier $\lambda\neq 0,1$, let $g$ be the conjugate produced in Lemma~\ref{lem:goodconj}. Then applying~\eqref{eq:fixedzero} to $g$, and then~\eqref{eq:goodconj} to bound $-h_{\Hom_d}(g)$ below in terms of $-h_{\Hom_d}(f)$, we have the estimate~\eqref{eq:mainglobal}.
\end{proof}
Lemma~\ref{lem:firstbound} has the virtue of being completely explicit, but it is a lower-bound on a conjugacy-class invariant with an error that can get arbitrarily bad within a conjugacy class. The following lemma, due to Silverman, relates $h_{\Hom_d}$ to the height on $\M_d$, allowing coordinate-free estimates.
\begin{lemma}[Silverman~{\cite[p.~103]{barbados}}]\label{lem:silv}
For $f\in \M_d$, 
\[h_{\M_d}(f)\asymp \min_{g\sim f} h_{\Hom_d}(g),\]
where the minimum is over $g\in\Hom_d$ in the same conjugacy class as $f$.
\end{lemma}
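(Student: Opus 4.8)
The plan is to establish the two inequalities underlying the relation $\asymp$ separately, exploiting that $\M_d$ is the geometric quotient $\Hom_d/\PGL_2$ and that $\Hom_d$ is the complement of the resultant hypersurface in $\PP^{2d+1}$. First I would handle the lower bound $h_{\M_d}(f)\leq C_1\min_{g\sim f}h_{\Hom_d}(g)+C_2$. Fix any ample height $h_{\M_d}$; it suffices to bound it by a height attached to a morphism into projective space whose coordinates are $\PGL_2$-invariant polynomials in the coefficients of $f$. The key point is functoriality of heights under morphisms: if $\phi\colon\Hom_d\to\PP^N$ is the map cutting out $\M_d$ by invariants (or any projective embedding of $\M_d$ pulled back to $\Hom_d$), then $h_{\M_d}(\bar g)\ll h_{\PP^N}(\phi(g))\ll \deg(\phi)\,h_{\Hom_d}(g)+O(1)$ for every $g$ in the class, since each invariant coordinate is a homogeneous polynomial of bounded degree in the $c_i$; taking the minimum over the class on the right gives the claim. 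Strictly, one must be slightly careful because $\M_d$ is only quasi-projective, so one passes to a projective compactification $\overline{\M}_d$ on which $h_{\M_d}$ extends (up to $O(1)$) to an honest height relative to a very ample bundle, and uses that the compactifying morphism is defined by invariants of bounded degree.

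For the upper bound $\min_{g\sim f}h_{\Hom_d}(g)\leq C_3 h_{\M_d}(f)+C_4$, the idea is that the quotient map $\pi\colon\Hom_d\to\M_d$ admits, on the preimage of any quasi-affine piece, a section up to bounded error: one cannot section $\pi$ globally, but one can choose, in each conjugacy class, a representative $g$ whose height is controlled by the class. Concretely, I would argue as follows. The invariants generating the coordinate ring of $\M_d$ separate conjugacy classes, so the map $\Hom_d\to\M_d$ is finite over its image in the GIT sense; choosing a finite set of invariants realizing $\M_d$ as a closed subvariety of affine space, the coordinates of $f\in\M_d$ are polynomials in the $c_i$, and conversely — this is where finiteness of the quotient morphism enters — the $c_i$ of a suitably normalized representative $g$ satisfy polynomial relations over the invariant ring, so $h_{\Hom_d}(g)\ll h_{\M_d}(f)+O(1)$ by the standard comparison of heights under a finite morphism (integral dependence of the $c_i$ on the invariants). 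The choice of ``suitably normalized'' representative is exactly the content of something like Lemma~\ref{lem:goodconj}: one fixes coordinates using a few marked points (e.g. two fixed points and a normalization of a Taylor coefficient), which costs only a bounded amount of height.

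The main obstacle is the upper bound, and specifically making precise the claim that a conjugacy class always has a representative of comparably bounded height. This is genuinely a statement about the geometry of the quotient $\Hom_d\to\M_d$ — it relies on the fact (ultimately GIT, and in this degree-$d$ setting going back to Silverman's construction of $\M_d$ as an affine scheme) that this quotient is a good quotient with the coefficient functions integral over the invariant ring on appropriate affine charts. Rather than reprove this, I would cite Silverman's original argument in~\cite[p.~103]{barbados} (or~\cite[\S4.4]{ads}) for precisely this lemma, as the statement already attributes it to him; the role of the plan here is to indicate why it is true — functoriality of heights under morphisms in one direction, and integral dependence under the finite quotient morphism in the other — rather than to carry out the GIT bookkeeping in detail.
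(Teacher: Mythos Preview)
The paper does not prove this lemma at all; it is stated with attribution to Silverman~\cite[p.~103]{barbados} and then simply used. Your proposal ultimately does the same thing --- you cite Silverman for the actual argument --- so there is no discrepancy, and your sketch of the underlying mechanism (functoriality of heights for the lower bound, integral dependence of the coefficients on the invariants under the GIT quotient for the upper bound) is a reasonable and accurate gloss on why the result holds, going beyond what the paper itself offers.
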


Combining Lemma~\ref{lem:silv} with Lemma~\ref{lem:firstbound} gives an inequality in which no term is coordinate dependent, as we will see below.

We now construct, more explicitly, the height on $\M_d$ used in~\cite{bijl}.
Let $\Hom_d^{\operatorname{Fix}}$ be the space of rational functions with all fixed points $\gamma_1, ..., \gamma_{d+1}$ marked, and consider the map $\Hom_d^{\operatorname{Fix}}\to \AA^{d+1}\subseteq (\PP^1)^{d+1}$ by \[(f, \gamma_1, ..., \gamma_{d+1})\mapsto (\lambda_0, ..., \lambda_{d+1}),\]
where $\lambda_i$ is the multiplier at the fixed point $\gamma_i$. 
Since change of coordinates on $\Hom_d^{\operatorname{Fix}}$ acts by permuting the multipliers, this map induces a morphism $\sigma:\M_d\to S^{d+1}\PP^1$, where $S^{d+1}\PP^1$ denotes the $(d+1)$st symmetric power of $\PP^1$. Note that the image of the pull-back map $(\PP^1)^{d+1}\to S^{d+1}\PP^1$ on divisors is exactly the subgroup of the form $\sum \pi_i^* D$, where $\pi_i:(\PP^1)^{d+1}\to\PP^1$ is the $i$th coordinate projection.
In particular,
\[h_{S^{d+1}\PP^1}(\lambda_0, ..., \lambda_{d+1})=h_{\PP^1}(\lambda_0)+\cdots +h_{\PP^1}(\lambda_{d+1})\]
is an ample height on $S^{d+1}\PP^1$, and any ample Weil height is a scalar multiple of this (up to $O(1)$).

Now, for each $n$ we define a morphism $\sigma_n:\M_d\to S^{d^n+1}\PP^1$ by composing $\sigma$ with the iteration map $\M_d\to\M_{d^n}$. In other words, $\sigma_n(f)=\sigma(f^n)$. In~\cite{bijl}, we used (more-or-less) $h_{S^{d^n+1}\PP^1}\circ \sigma_n$ as a height on $\M_d$, which we justify more explicitly in the following lemma. Note that Latt\`{e}s maps are defined in~\cite[\S 6.5]{ads}, and they must be excluded in the following lemma, since they represent curves in $\M_d$ on which $\sigma_n$ is constant for any $n$ \cite[Proposition~6.52, p.~358 and Exercise~6.18, p.~382]{ads}.
\begin{lemma}\label{lem:sigmapullback}
For some $n$ depending only on $d$, we have
\[h_{\M_d}(f)\ll h_{S^{d^n+1}\PP^1}\circ\sigma_n(f)\]
for all non-Latt\`{e}s $f\in M_d$.
\end{lemma}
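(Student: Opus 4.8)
The plan is to recast the inequality on a projective model of $\M_d$ and then feed in McMullen's finiteness theorem. Fix an ample line bundle $\mathcal{L}$ on $S^{d^n+1}\PP^1$ with $h_{S^{d^n+1}\PP^1}=h_{\mathcal{L}}+O(1)$. Since $\sigma_n\colon \M_d\to S^{d^n+1}\PP^1$ is a morphism, choose a projective compactification $\overline{\M_d}\supseteq\M_d$; the only indeterminacy of $\sigma_n$, viewed as a rational map on $\overline{\M_d}$, lies over the boundary, so after a birational modification $p\colon Y\to\overline{\M_d}$ with $Y$ smooth projective and $p$ an isomorphism over $\M_d$, the composite $\sigma_n\circ p$ extends to a morphism $q\colon Y\to S^{d^n+1}\PP^1$. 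Identifying $\M_d$ with $p^{-1}(\M_d)\subseteq Y$, so that $q|_{\M_d}=\sigma_n$, functoriality of heights gives $h_{S^{d^n+1}\PP^1}\circ\sigma_n=h_{q^{*}\mathcal{L}}|_{\M_d}+O(1)$, while $h_Y|_{\M_d}\asymp h_{\M_d}$ by Lemma~\ref{lem:ampledominates}. So it suffices to find $n=n(d)$ for which $q^{*}\mathcal{L}$ dominates an ample height on $Y$ off the image of the Latt\`{e}s locus $L$.

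Here is where McMullen's Theorem on stable families enters, in the form recalled above: by \cite[Corollary~2.3]{mcmullen} there is an $n$, depending only on $d$, for which the multiplier-spectrum morphism $\sigma_n$ has finite fibres over $\M_d\setminus L$. As the non-Latt\`{e}s locus is Zariski dense, this already forces $\sigma_n$, hence $q$, to be generically finite onto its image, so $q^{*}\mathcal{L}$ is big on $Y$. By the standard dictionary identifying the augmented base locus $\mathbf{B}_{+}(q^{*}\mathcal{L})$ of the pullback of an ample bundle with the exceptional locus of $q$ (the union of its positive-dimensional fibres), and using that $\M_d$ is open in $Y$ with $q|_{\M_d}=\sigma_n$, a point $f\in\M_d$ lies outside $\mathbf{B}_{+}(q^{*}\mathcal{L})$ exactly when it is isolated in $\sigma_n^{-1}(\sigma_n(f))$. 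By the choice of $n$ this holds for every $f\in\M_d\setminus L$, so $\M_d\setminus L\subseteq Y\setminus\mathbf{B}_{+}(q^{*}\mathcal{L})$.

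To finish, I would invoke the height inequality for big line bundles. For any $\QQ$-linear equivalence $q^{*}\mathcal{L}\sim A+D$ with $A$ ample and $D$ effective on $Y$, one has $h_{q^{*}\mathcal{L}}=h_A+h_D+O(1)\geq h_A-O(1)\gg h_Y-O(1)$ on $Y\setminus\Supp(D)$; since $\mathbf{B}_{+}(q^{*}\mathcal{L})$ is the intersection of the loci $\Supp(D)$ over all such decompositions, the open sets $Y\setminus\Supp(D)$ cover $Y\setminus\mathbf{B}_{+}(q^{*}\mathcal{L})$, and by quasi-compactness finitely many already do, which yields a \emph{uniform} bound $h_{q^{*}\mathcal{L}}\geq c\,h_Y-C$ on $Y\setminus\mathbf{B}_{+}(q^{*}\mathcal{L})\supseteq\M_d\setminus L$ with $c>0$. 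Restricting to $\M_d\setminus L$ and combining with the two comparisons from the first paragraph gives $h_{S^{d^n+1}\PP^1}\circ\sigma_n(f)\gg h_{\M_d}(f)$ for all non-Latt\`{e}s $f$, as claimed.

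The step I expect to be the real obstacle is the middle one: extracting from McMullen's result---phrased in terms of families on which the multiplier spectra are \emph{constant}---the quasi-finiteness of the particular morphism $\sigma_n$ over $\M_d\setminus L$, and then checking on the geometric side that the exceptional locus of the extended map $q$ meets $\M_d$ only inside $L$, rather than picking up spurious components from the compactification; the birational-modification bookkeeping and the base-locus/exceptional-locus dictionary, though standard, are where one must be careful. The first and last paragraphs, by contrast, are formal manipulations with ample and big line bundles.
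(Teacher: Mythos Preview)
Your argument is correct, but it takes a longer and heavier-machinery route than the paper. The paper's proof is a one-line application of Lemma~\ref{lem:ampledominates}: with $X$ the chosen projective compactification of $\M_d$, $U=\M_d\setminus L$, $Y=S^{d^n+1}\PP^1$, and $F=\sigma_n$, McMullen's theorem supplies the finite-fibres hypothesis and the lemma gives the height inequality directly. You already invoke Lemma~\ref{lem:ampledominates} in your first paragraph (to compare ample heights under the birational modification $p$), so you could have applied it again to $q$ and been done; instead you resolve indeterminacy, identify $\mathbf{B}_{+}(q^{*}\mathcal{L})$ with the exceptional locus of $q$ via Nakamaye's theorem, and then run the ample-plus-effective decomposition with a quasi-compactness argument. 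What your approach buys is a more geometric picture---it makes transparent exactly which locus obstructs the bound---at the cost of importing Nakamaye's theorem and the resolution step; the paper's route hides this geometry inside the inductive proof of Lemma~\ref{lem:ampledominates}, which in turn rests on Silverman's equidimensional height bound. Your worry about the exceptional locus of $q$ picking up spurious boundary components is unfounded: since $\M_d$ is open in $Y$, any positive-dimensional fibre component through a point of $\M_d$ already meets $\M_d$ in a positive-dimensional set, so isolatedness in $\sigma_n^{-1}(\sigma_n(f))$ really is equivalent to lying outside $\mathrm{Exc}(q)$.
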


We deduce this lemma from a more general result. Note that this is closely related to~\cite[Proposition~10.13, p.~145]{vojta} and~\cite[Theorem~1]{jhsequi}, and can be deduced from either. Here we present a proof using \cite[Theorem~1]{jhsequi}.

\begin{lemma}\label{lem:ampledominates}
Let $X$ and $Y$ be irreducible projective varieties equipped with ample line bundles $L$ and $M$ (respectively), and let $U\subseteq X$ be a Zariski open subset with a morphism $F:U\to Y$ with finite fibres. Then for all $u\in U$,
\[h_{X, L}(u)\ll h_{Y, M}(F(u)).\]
\end{lemma}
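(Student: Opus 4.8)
The plan is to reduce the statement to a finiteness property of morphisms between projective varieties and then invoke the cited equidistribution/height-comparison result. First I would reduce to the case where $F$ actually extends to the whole of $X$: since $U\subseteq X$ is Zariski open with complement $Z$ a proper closed subset, and $h_{X,L}$ is bounded on any subvariety by an ample height of that subvariety, it suffices to prove the inequality on $U$, so I may assume (after replacing $X$ by a compactification of the graph of $F$ and $U$ by its image, which does not change the left side up to $O(1)$ by functoriality of heights under the projection, and noting the graph maps finitely to $Y$ via the second projection where $F$ is defined) that we have a genuine morphism. Concretely: let $\Gamma\subseteq X\times Y$ be the closure of the graph of $F$, let $p:\Gamma\to X$ and $q:\Gamma\to Y$ be the projections. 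Over $U$, $p$ is an isomorphism, and $q$ restricted to $p^{-1}(U)$ has finite fibres. Then $h_{X,L}(u) = h_{\Gamma, p^*L}(p^{-1}(u)) + O(1)$ for $u\in U$, so it is enough to bound $h_{\Gamma, p^*L}$ on $p^{-1}(U)$ by $h_{Y,M}\circ q$.

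Next I would handle the line bundle bookkeeping. The bundle $p^*L$ need not be ample on $\Gamma$, but it is a difference of very ample bundles, so after twisting we only need a comparison of the form $h_{\Gamma, N}(\cdot) \ll h_{Y, M}(q(\cdot))$ for $N$ very ample on $\Gamma$, which then gives the claim after subtracting an effective (hence bounded-below) contribution. The heart of the matter is then: if $q:\Gamma\to Y$ is a morphism of projective varieties, $N$ ample on $\Gamma$, $M$ ample on $Y$, and $q$ has finite fibres over a Zariski-dense open set $V=p^{-1}(U)$, then $h_{\Gamma,N}(x)\ll h_{Y,M}(q(x))$ for $x\in V$. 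This is where I would apply \cite[Theorem~1]{jhsequi}: the statement there compares heights under a morphism in terms of its fibre dimensions, and finiteness of fibres over $V$ forces the relevant local quantity to be $0$ there, yielding exactly the linear bound $h_{\Gamma, N} \ll h_{Y, M}\circ q$ on $V$. Alternatively one invokes \cite[Proposition~10.13, p.~145]{vojta} directly. Either way, the mechanism is that a morphism with zero-dimensional fibres pulls an ample class back to something ``eventually ample'' relative to the target, with the failure of ampleness controlled on the exceptional locus where fibres jump in dimension.

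The main obstacle — really the only delicate point — is the passage from ``finite fibres on the open set $U$'' to a height inequality valid \emph{on} $U$, given that $F$ itself may not extend and the pullback bundle may not be ample; one must be careful that the correction terms introduced by compactifying the graph and by the ample-minus-ample decomposition are genuinely $O(1)$ on $U$ and do not secretly depend on the point. This is exactly the content packaged in \cite[Theorem~1]{jhsequi}, whose hypotheses (a rational map with generically finite fibres along a subvariety) are tailored to this situation, so citing it disposes of the obstacle cleanly. I would then close by remarking that everything is stated up to additive $O(1)$ and multiplicative constants depending only on $X$, $Y$, $F$, $L$, $M$, which is all that is needed for the applications to $\M_d$ in Lemma~\ref{lem:sigmapullback}.
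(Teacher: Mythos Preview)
Your proposal has a genuine gap at precisely the point you flag as ``the only delicate point.'' You claim that \cite[Theorem~1]{jhsequi} disposes of the passage from generic finiteness to a height inequality valid on all of $U$ (or $V=p^{-1}(U)$ in your graph setup), but that is not what the theorem delivers. Silverman's result gives constants $C_1, C_2$ and a proper Zariski-closed subset $Z$ of the source such that $h_{X,L}(u)\le C_1 h_{Y,M}(F(u))+C_2$ only for $u$ outside $Z$; the exceptional set $Z$ is produced by the proof and is \emph{not} guaranteed to lie in the locus where fibres jump in dimension. In particular $Z$ may well meet $U$ (resp.\ $V$), and your argument says nothing about points in $U\cap Z$. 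The same issue arises if you cite \cite[Proposition~10.13]{vojta}.

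The missing idea, which the paper supplies, is induction on $\dim(X)$. One first replaces $Y$ by the closure of $F(U)$ so that $F$ is dominant and equidimensional; a single application of \cite[Theorem~1]{jhsequi} then gives the inequality on $U\setminus Z$ for some proper closed $Z\subseteq X$. Each irreducible component $Z_i$ of $Z$ has $\dim(Z_i)<\dim(X)$, and the restriction of $F$ to $U\cap Z_i$ still has finite fibres, so the induction hypothesis yields the inequality on $U\cap Z_i$ with new constants. Taking the maximum over finitely many components finishes the proof. Note also that the graph-closure manoeuvre in your proposal is unnecessary: Silverman's theorem already applies to rational maps, so one can work with $F:X\dashrightarrow Y$ directly and avoid the bundle bookkeeping with $p^*L$.
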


\begin{proof}
We proceed by induction on $\dim(X)$, noting the the claim is trivial if $\dim(X)=0$. So now suppose that the claim in the lemma is true in all cases where the domain has dimension less than $\dim(X)$. Note that for any closed $W\subseteq Y$, the restriction $M|_W$ of $M$ to $W$ is ample, and $h_{W, M|_W}$ is the restriction to $W$ of $h_{Y, M}$. Without loss of generality, we may replace $Y$ by the Zariski closure  $\overline{F(U)}$, and $M$ by its restriction to this subvariety, and thereby assume that $F$ is dominant. If $U$ is non-empty, then $F$ has at least one finite fibre, and so $\dim(U)=\dim(Y)$. By~\cite[Theorem~1]{jhsequi}, there exist constants $C_{1, 0}$ and $C_{2, 0}$ and a Zariski-closed subset $Z\subseteq X$ such that
\[h_{X, L}(u)\leq C_{1, 0}h_{Y, M}(F(u))+C_{2, 0}\]
for all $u\in U\setminus Z$. Of course, if $U$ is empty then such an inequality holds vacuously, with the exceptional set $Z$ also empty.

Now, $Z$ has finitely many irreducible components $Z_1, ..., Z_k$, all with $\dim(Z_i)<\dim(X)$. Furthermore, for each $i$ the restriction of $F$ to $U\cap Z_i$ has finite fibres (vacuously if $U\cap Z_i=\emptyset$), and so by the induction hypothesis we have a bound of the form
\[h_{X, L}(u)=h_{Z_i, L|_{Z_i}}(u)\leq C_{1, i}h_{Y, M}(F(u))+C_{2, i}\]
for all $u\in U\cap Z_i$. It follows that
\[h_{X, L}(u)\leq \left(\max_{0\leq i\leq k}C_{1, i}\right)h_{Y, M}(F(u))+\left(\max_{0\leq i\leq k}C_{2, i}\right)\]
for all $u\in U$.
\end{proof}

\begin{proof}[Proof of Lemma~\ref{lem:sigmapullback}]
This is now a direct application of the previous lemma. We have chosen a projective $X\supseteq \M_d$ and an ample line bundle $L$ relative to which we are defining $h_{\M_d}$. Let $U\subseteq \M_d$ be the complement of the Latt\`{e}s locus. For some $n$, the morphism $\sigma_n:U\to S^{d^n+1}\PP^1$ has finite fibres (see~\cite[Corollary~2.3]{mcmullen}, and the comments following the proof).
\end{proof}

We are now in a position to prove the main result.
\begin{proof}[Proof Theorem~\ref{th:main}]
As mentioned in the introduction,  the inequality $\hcrit\ll  h_{\M_d}$ is found in~\cite{barbados}. We are concerned only with the other direction.

Suppose that $f\in \Hom_d$ has a fixed point with multiplier $\lambda$. By Lemma~\ref{lem:firstbound} there are constants $c_1$ and $c_2$ depending just on $d$ with
\begin{equation}\label{eq:lambdabound}d^{k+1}\hcrit(f)\geq (k-1)h(\lambda)-c_1h_{\Hom_d}(f)-c_2k\end{equation}
for any $k\geq 1$.
Summing over all fixed points, we have
\[(d+1)d^{k+1}\hcrit(f)\geq (k-1)h_{S^{d+1}\PP^1}\circ\sigma(f)-c_1(d+1)h_{\Hom_d}(f)-c_2(d+1)k.\]
Applying Lemma~\ref{lem:silv}, and assuming without loss of generality that $f$ is of minimal height in its conjugacy class (which we also denote $f$), we obtain
\begin{equation}\label{eq:sigmaonebound}(d+1)d^{k+1}\hcrit(f)\geq (k-1)h_{S^{d+1}\PP^1}\circ\sigma(f)-c_3h_{\M_d}(f)-c_4k,\end{equation}
for new constants $c_3$ and $c_4$ depending just on $d$, in particular since $h\circ\sigma(f)$ and $\hcrit(f)$ are constant on conjugacy classes.
Note that an inequality of this form holds for each $d$.

We claim that $h_{\M_{d^n}}(f^n)\ll h_{\M_d}(f)$, with constants depending on $d$ and $n$. This is just because $n$-fold iteration defines a morphism of affine varieties $\M_d\to\M_{d^n}$ (for example, see~\cite[Introduction]{jhsequi}), but we can also see this more directly from Lemma~\ref{lem:silv}. Specifically, For any $f$ we have by \cite[Proposition~5d]{hs} that
\[h_{\Hom_{d^n}}(f^n)\leq \left(\frac{d^n-1}{d-1}\right)h_{\Hom_d}(f)+O(d^n),\]
which is obtained by estimating the coefficients of $f^n$ above in terms of those of $f$. Lemma~\ref{lem:silv} tells us that,  without loss of generality, we may replace $f$ with a conjugate so that $h_{\Hom_d}(f)\ll h_{\M_d}(f)$, and then note that
\[h_{\M_{d^n}}(f^n)\ll h_{\Hom_{d^n}}(f^n)\ll h_{\Hom_d}(f)\ll h_{\M_d}(f),\]
with implied constants depending on $d$ and $n$, and with the first inequality following from an application of Lemma~\ref{lem:silv} to $\M_{d^n}$ (which does not require replacing $f^n$ by a conjugate).

Also, note that the chain rule and properties of the canonical height give $\hcrit(f^n)=n\hcrit(f)$. It follows that, applying~\eqref{eq:sigmaonebound} to $f^n$, we have (for any $n$)
\[(d^n+1)d^{n(k+1)}n\hcrit(f)\geq (k-1)h_{S^{d^n+1}\PP^1}\circ\sigma_n(f)-c_5h_{\M_d}(f)-c_6k,\]
where $c_5$ and $c_6$ are constants now depending on both $d$ and $n$. By Lemma~\ref{lem:sigmapullback}, there exists an $n$, which we now fix, and constants $\epsilon>0$ and $c_7$ such that \[h_{S^{d^n+1}\PP^1}\circ\sigma_n(f)\geq \epsilon h_{\M_d}(f)-c_7\]
for all non-Latt\`{e}s $f\in \M_d$.
From this we have, for any $k\geq 1$,
\[\hcrit(f)\geq \left(\frac{(k-1)\epsilon - c_5}{(d^n+1)d^{n(k+1)}n}\right)h_{\M_d}(f)-c_8,\]
for some $c_8$ depending just on $k$ (and our fixed $d$ and $n$). Choosing $k>1+c_5/\epsilon$ gives the inequality in the statement of Theorem~\ref{th:main}.
\end{proof}

The proof of Theorem~\ref{th:fibration} is now quite quick.
\begin{proof}[Proof of Theorem~\ref{th:fibration}]
Let $\operatorname{Per}_n(\lambda)\subseteq \M_d$ be the subvariety consisting of conjugacy classes of rational functions admitting an $n$-cycle of multiplier $\lambda$. Note that, by the main result of~\cite{bijl}, if $\hcrit$ vanishes at all on $\operatorname{Per}_n(\lambda)$, then $h(\lambda)$ is bounded in terms of $d$ and $n$. Taking $B$ at least as large as this bound and $h(\lambda)> B$, we will assume that $\operatorname{Per}_n(\lambda)$ contains no PCF maps, and in particular no Latt\`{e}s examples.

By~\eqref{eq:lambdabound} combined with Lemma~\ref{lem:silv} and the estimate $h_{\M_{d^n}}(f^n)\ll h_{\M_d}(f)$ from the proof of Theorem~\ref{th:main}, we have
\[(k-1)h(\lambda)\leq d^{nk+1}n\hcrit(f)+c_1h_{\M_d}(f)+c_2k\]
for some constants depending on $d$ and $n$. On the other hand, Theorem~\ref{th:main} now gives $h_{\M_d}(f)\ll \hcrit(f)$ (recall that we have ensured that $\operatorname{Per}_n(\lambda)$ contains no Latt\`{e}s examples), and so taking $k=2$ we have
\[h(\lambda)\leq c_3\hcrit(f)+c_4,\]
with $c_3$ and $c_4$ dependent on $d$ and $n$. This also gives
\[c_{3}^{-1}\leq \frac{\hcrit(f)}{h(\lambda)}+o(1)\]
with $o(1)\to 0$ 
as $h(\lambda)\to\infty$, proving the Theorem.
\end{proof}

For the claim made immediately after the statement of Theorem~\ref{th:fibration}, we consider the constants somewhat more carefully. In particular, if $P$ is a point of period $n$ for $f$, with multiplier $\lambda$, then it is a fixed point of $f^{nm}$ with multiplier $\lambda^m$. By~\eqref{eq:epsilonsum}, we see that for $d\geq 3$ we may take
\[c_2(d)=(d-1)\log 3+d\log\operatorname{lcm}(1, ..., d)\leq 1.04d^2+(d-1)\log 3,\]
by an estimate of 
Rosser and Schoenfeld~\cite{rs} (noting that $\log\operatorname{lcm}(1, ... ,d)$ is the second Tchebyshev function from the proof of the Prime Number Theorem). Again by~\eqref{eq:epsilonsum} we may take $c_2(2)=5\log 2\leq 1.04(2)^2+(2-1)\log 3$ as well. Thus, applying the estimates above to $f^{nm}$, we have
\[\left((k-1)h(\lambda^m)-kc_2(d^{nm})\right)\leq (d^{k+1}nm+c_3)\hcrit(f)+c_4,\]
where $c_3$ and $c_4$ depend on $n$, $m$, and $d$. Now fix $m$, let $\delta>0$, and suppose that $\hcrit$ admits no positive lower bound on $\operatorname{Per}_n(\lambda)$. Then we must in fact have
\[\left((k-1)h(\lambda^m)-kc_2(d^{nm})\right)\leq c_4,\]
for each $k$, which is possible only if
\[h(\lambda)\leq \frac{k}{m(k-1)}c_2(d^{nm})\]
for all $k$, or in other words $h(\lambda)\leq \frac{1}{m}c_2(d^{nm})$.


\section{Super-attracting fixed points}\label{sec:sa}

The arguments in Section~\ref{sec:global} make use of McMullen's Theorem on the multiplier spectrum, as well as estimates relating $h_{\Hom_d}$ to $h_{\M_d}$, both of which interfere with the presentation of explicit constants. If we are willing to restrict attention to rational functions with a super-attracting fixed point, we may avoid any inexplicit estimates. In Subsection~\ref{subsec:local} we present local estimates that play the role of those in Section~\ref{sec:greens} but that, in this context, make no reference to multipliers. In Subsection~\ref{subsec:global} we sum these over all places to obtain Theorem~\ref{th:geom}.

\subsection{Local estimates}\label{subsec:local}
As in Section~\ref{sec:greens}, we will assume that $K$ is an algebraically closed field of characteristic $0$ or $p>d$, complete with respect to some absolute value $|\cdot|$ corresponding to the valuation $v$. In Subsection~\ref{subsec:global} below, quantities depending on $v$ will acquire a subscript.

We restrict attention to  $f$ of the form
\begin{equation}\label{eq:sanormform}f(z)=\frac{z^e+\cdots +a_dz^d}{1+\cdots +b_{d-1}z^{d-1}}=\frac{z^e\prod_{i=1}^{d-e}(1-\alpha_iz)}{\prod_{j=1}^{d-1}(1-\beta_jz)},\end{equation}
with $2\leq e\leq d$.
Note that, unlike in the previous section, the normal form is not maintained under a scaling $f(z)\mapsto \xi^{-1}f(\xi z)$, and so we will need to keep more careful track of the $\alpha_i$ and $\beta_j$.
For such $f$, define \[\rho_f=\frac{1}{\max\{|\alpha_1|, ..., |\alpha_{d-e}|, |\beta_1|,... , |\beta_{d-1}|\}},\] and a constant $C_v$ by
\[C_v=\begin{cases}
\left(\frac{2e+1}{e-1}\right)\log 2 + \frac{d-e}{e-1}\log 3& \text{ if $v$ is archimedean}\\
\frac{d}{e-1}\log\max_{1\leq m\leq d}|m^{-1}| & \text{ if $v$ is non-archimedean}.
\end{cases}
\]

\begin{lemma}\label{lem:sabranch}
Suppose that $\log\rho_f+C_v<0$. Then there is a branch point $\beta$ of $f$ satisfying
\[-\infty<\log|f^k(\beta)|<e^k\log \rho_f+\frac{e^k}{e-1}\log^+|2^{e-1}3^{d-e}|\]
for all $k\geq 1$.
\end{lemma}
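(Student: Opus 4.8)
The plan is to prove Lemma~\ref{lem:sabranch} in two moves that parallel Lemma~\ref{lem:attraction}, but with the super-attracting fixed point at $z=0$ playing the role of the attracting fixed point: first establish an invariant disk on which the iterates converge super-exponentially to $0$, then show that disk must contain a branch point. Let me set $r$ so that $\log r = \log\rho_f + \tfrac{1}{e-1}\log^+|2^{e-1}3^{d-e}|$, which our hypothesis $\log\rho_f + C_v < 0$ is designed to keep small; after rescaling coordinates (tracking the effect on the $\alpha_i,\beta_j$, since the normal form~\eqref{eq:sanormform} is not scale-invariant) I would reduce to the case $\max\{|\alpha_i|,|\beta_j|\}$ equal to a convenient normalization, so that $\rho_f$ becomes a clean quantity.

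First I would prove the super-exponential escape estimate. For $z$ in a suitable disk $D(0,r)$, the factorization $f(z) = z^e \prod(1-\alpha_i z)/\prod(1-\beta_j z)$ together with the triangle (resp. ultrametric) inequality gives $|f(z)| \leq |z|^e \cdot B$, where $B$ bounds the product of the $(1-\alpha_i z)/(1-\beta_j z)$ terms on the disk: in the archimedean case $B \leq 2^{e-1}3^{d-e}$ roughly (using $|1-\beta_j z| \geq 1/2$ and $|1-\alpha_i z|\leq 3/2$ once $|z|$ is small relative to $\rho_f$, absorbing a factor $2^{e-1}$ to cover the pole contributions — the precise bookkeeping is where the exponent $e-1$ versus $d-e$ split comes from), and in the non-archimedean case $B \leq 1$ so the estimate is simply $|f(z)| \leq |z|^e$. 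Then $f$ maps $D(0,r)$ into itself provided $r^{e-1}B < 1$, i.e. provided $\log r + \tfrac{1}{e-1}\log B < 0$, which is exactly the content of $\log\rho_f + C_v < 0$ after unwinding the constants. Iterating, $\log|f^k(z)| \leq e^k\log r + (e^{k}-1)/(e-1)\cdot(\text{something})$; the geometric-series bookkeeping $1 + e + \cdots + e^{k-1} = (e^k-1)/(e-1) \leq e^k/(e-1)$ yields the stated upper bound $e^k\log\rho_f + \tfrac{e^k}{e-1}\log^+|2^{e-1}3^{d-e}|$. One must also check $f(z)\neq 0$ for $z\neq 0$ in the disk — this holds because every nonzero root $\alpha_i^{-1}$ has absolute value $\geq \rho_f$, hence lies outside $D(0,r)$ since $r < \rho_f$ — which gives the lower bound $-\infty < \log|f^k(\beta)|$.

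Next I would show $D(0,r)$ contains a nonzero branch point. Here the two cases diverge exactly as in the proof of Lemma~\ref{lem:attraction}. In the non-archimedean case this is a direct appeal to~\cite[Theorem~4.1]{bijl} (or the argument behind it): $z=0$ is a fixed point, $z=\infty$ is a fixed point, coordinates are scaled so the nearest root/pole sits at absolute value $\rho_f^{-1}$, and the relevant directional-multiplicity obstruction is governed by $|\deg_{\zeta,\vec w}f|$, an integer in $[1,d]$, whose $d$-th power is bounded below by $\min_m|m^{-1}|^{-d}$ — precisely what makes the constant $C_v = \tfrac{d}{e-1}\log\max_m|m^{-1}|$ the right threshold. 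In the archimedean case I would run the Koebe $\tfrac14$-theorem argument: if $f$ has no branch point in $D(0,r)$, let $W$ be the component of $f^{-1}(D(0,r))$ containing $0$; then $f:W\to D(0,r)$ is an unbranched proper map, so it is a covering of the disk, but since $z=0$ is a critical point of local degree $e$ this covering has degree $e$, $W$ is simply connected, contains only the zero $z=0$ and no poles, yet cannot contain the nearest root/pole on the circle of radius $\rho_f$; comparing the conformal radius of $W$ from Koebe against what the degree-$e$ map $f$ forces gives a lower bound on $\rho_f$ that contradicts $\log\rho_f + C_v < 0$. The main obstacle — and the part requiring genuine care rather than routine estimation — is getting the archimedean constant $C_v = \tfrac{2e+1}{e-1}\log 2 + \tfrac{d-e}{e-1}\log 3$ to come out exactly, since this requires a clean Koebe/conformal-radius computation for a degree-$e$ branched cover (not just the degree-one case of Lemma~\ref{lem:attraction}) and careful tracking of how the $2$'s from $|1-\beta_j z|\geq\tfrac12$ and the change-of-variables interact with the $e$-th power; I expect this to be the delicate point, with everything else following the template already established.
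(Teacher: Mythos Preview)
Your overall structure is right, but you have the difficulty inverted. The non-archimedean case is \emph{not} a direct appeal to \cite[Theorem~4.1]{bijl}: that result requires the fixed-point multiplier to satisfy $0<|\lambda|<\text{threshold}$, whereas here the fixed point at $0$ is super-attracting ($\lambda=0$), so the hypothesis fails outright. The paper instead reruns the Berkovich argument from scratch in this setting: one works with the open disk $U=D(0,\rho_f)$, the component $V\supseteq U$ of $f^{-1}(U)$, the distortion function $G(\zeta)=m\delta(f,\zeta)+\log\|f\|_\zeta$, and a careful slope analysis of $\log t\mapsto G(\zeta_{0,t})$ to locate an affinoid $W$ on which a counting inequality forces a critical point with $f(\zeta)\neq 0$. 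The constant $C_v=\tfrac{d}{e-1}\log\max_m|m^{-1}|$ emerges from the bound $\delta(f,\zeta)\geq\min_{1\leq k\leq d}\log|k|$ combined with $\log\|f\|_{\zeta_{0,\rho_f}}=e\log\rho_f$ (reflecting the local degree $e$), not from a directional-multiplicity obstruction as you suggest. This is the longest and most delicate part of the proof, not a citation.

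Conversely, the archimedean case is cleaner than you anticipate. The key device you are missing is that once $W$ is the component of $f^{-1}(D(0,\tfrac12\rho_f))$ containing $0$ and is simply connected with $f$ branched only at $0$, the map $f:W\to D(0,\tfrac12\rho_f)$ factors through an analytic $e$th root $\beta(z)=z+O(z^2)$, which is \emph{univalent} from $W$ onto $D(0,(\tfrac12\rho_f)^{1/e})$. This immediately gives the conformal radius of $W$ as $(\tfrac12\rho_f)^{1/e}$, and Koebe's bound $\leq 4\rho_f$ then yields $\rho_f\geq 2^{-(2e+1)/(e-1)}$ in one line --- the constant falls out with no further bookkeeping. (Also: do not rescale. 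As the paper notes, the normal form~\eqref{eq:sanormform} is not preserved under $z\mapsto\xi z$ since the leading coefficient of the numerator is pinned to $1$; work directly with $\rho_f$ and the disk $D(0,\tfrac12\rho_f)$.)
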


\begin{proof}[Proof of Lemma~\ref{lem:sabranch} over $\CC$]
Note that for any $0\neq |z| < \frac{1}{2}\rho_f$ and $|z|<\exp(-C_v)$,
\[0\neq |f(z)|=|z|^e\frac{\prod_{i=1}^{d-e}|1-\alpha_i z|}{\prod_{j=1}^{d-1} |1-\beta_j z|}\leq|z|^e\frac{(3/2)^{d-e}}{1/2^{d-1}}\leq |z|^e3^{d-e}2^{e-1}\leq |z|,\]
and so by induction
\begin{equation}\label{eq:branchboundsa}\log|f^k(z)|\leq e^k\log|z|+\frac{e^k-1}{e-1}\log|3^{d-e}2^{e-1}|<e^k\left(\log|z|+\frac{1}{e-1}\log|3^{d-e}2^{e-1}|\right).\end{equation}
It suffices to show that $D(0, \frac{1}{2}\rho_f)$ contains a branch point of $f$ other than $z=0$.

Suppose that $f$ has no branch points in $D(0, \frac{1}{2}\rho_f)$, other than at $z=0$, and let $W$ be the connected component of $f^{-1}(D(0, \frac{1}{2}\rho_f))$ containing $0$. Topologically, $W$ is a disk with some number of punctures, but since $f:W\setminus f^{-1}(0)\to D(0, \frac{1}{2}\rho_f)\setminus\{0\}$ is unbranched and $f(0)=0$, we see that $W$ is simply connected, and $f^{-1}(0)\cap W =\{0\}$. Since $f(W)\subseteq D(0, \frac{1}{2}\rho_f)$, $W$ contains no poles of $f$. In particular, $W$ does not contain $\overline{D(0, \rho_f)}$, and so by Koebe's $\frac{1}{4}$ Theorem, the conformal radius of $W$ relative to 0 is no greater than $4\rho_f$.

On the other hand, $f:W\to D(0, \frac{1}{2}\rho_f)$ factors through an analytic $e$th root $\beta:W\to D(0, (\frac{1}{2}\rho_f)^{1/e})$ given by $\beta(z)=z+O(z^2)$. This map witnesses  $W$ having conformal radius exactly $(\frac{1}{2}\rho_f)^{1/e}$, and so
\[\left(\frac{1}{2}\rho_f\right)^{1/e}\leq 4\rho_f,\]
or $\rho_f\geq 2^{-(2e+1)/(e-1)}$.
This contradicts our hypothesis that $\log\rho_f<-C_v$.
\end{proof}

\begin{proof}[Proof of Lemma~\ref{lem:sabranch} over non-archimedean fields]
The proof closely follows \cite{bijl}. 

 Let $U\subseteq \PP^1_{\mathrm{Berk}}$ be the open disk at $0$ of radius $\rho_f$, and let $V\supseteq U$ be the connected component of $f^{-1}(U)$ containing $0$. We know that $f:V\to U$ is $m$-to-1, for some $m\geq e=e_{f}(0)$ which we fix now. We also know that $V$ is an open affinoid, that is, $V=D(0, R)\setminus (W_1\cup\cdots \cup W_k)$ for some closed disks $W_i=\overline{D(b_i, R_i)}\subseteq D(0, R)$ with $W_i\cap U=\emptyset$.

For $\zeta\in \PP^1_{\mathrm{Berk}}$ we set (as in~\cite{bijl})
 \[\operatorname{rad}(\zeta)=\inf_{a\in \PP^1(K)}\|z-a\|_\zeta\]
the distortion
\[\delta(f, \zeta)=\log\operatorname{rad}(\zeta)+\log\|f'\|_\zeta-\log\|f\|_\zeta\]
and
\[G(\zeta)=m\delta(f, \zeta)+\log\|f\|_\zeta.\]
Also, for any point $\zeta_{a, t}$ corresponding to a disk, let
\begin{gather*}
N^+(f, \zeta_{a, t}, b)=\#\{z\in \overline{D(a, t)}:f(z)=b\}\\
N^-(f, \zeta_{a, t}, b)=\#\{z\in D(a, t):f(z)=b\}.\\
\end{gather*}
Note that $t\mapsto G(\zeta_{0, t})$ is continuous and piecewise linear in $\log t$, with slope 
\begin{equation}\label{eq:slope}m(1+N^{\pm}(f', \zeta_{0, t}, 0)-N^{\pm}(f', \zeta_{0, t}, \infty))+(1-m)(N^{\pm}(f, \zeta_{0, t}, 0)-N^{\pm}(f, \zeta_{0, t}, \infty))\end{equation}
except at the points where the slope is undefined (see~\cite[Proof of Theorem~4.1]{bijl}; note that the points at which the slope is undefined are exactly those at which there is a distinction between $N^+$ and $N^-$).

Note that 
\begin{equation}\label{eq:deltabound}\log|N^\pm (f, \zeta, 0)-N^\pm (f, \zeta, \infty)|\leq \delta(f, \zeta)\leq 0,\end{equation}
by~\cite[Lemma~3.3]{bijl},
and so we have 
\[G(\zeta_{0, \rho_f})\leq \log\|f\|_{\zeta_{0, \rho_f}}=e\log \rho_f.\]
On the other hand, we have  $f(\zeta_{0, R})=f(\zeta_{b_i, R_i})=\zeta_{0, \rho_f}$, and so
\[\log\|f\|_{\zeta_{0, R}}=\log\|f\|_{\zeta_{b_i, R_i}}=\log \rho_f.\]
Again by~\eqref{eq:deltabound}
\[G(\zeta_{0, R})\geq m\min_{1\leq k\leq d}\log|k|+\log \rho_f\geq \log \rho_f-\frac{m(e-1)}{d} C_v,\]
and hence
\[G(\zeta_{0, R}) -G(\zeta_{0, \rho_f})\geq (e-1)\log \rho^{-1}_{f}-\frac{m(e-1)}{d}C_v>0\]
by our hypothesis on $\rho_f$ (and since $m\leq d$). Since the function $\log t\mapsto G(\zeta_{0, t})$ increases on average from $t=\rho_f$ to $t=R$,
 there exist $t\in [\rho_f, R)$ where the graph has positive slope (given by ~\eqref{eq:slope}). We take $S$ to be the infimum of such $t$.  We have
\begin{equation}\label{eq:nplus}
m(1+N^+(f', \zeta_{0, S}, 0)-N^+(f', \zeta_{0, S}, \infty))+(1-m)(N^+(f, \zeta_{0, S}, 0)-N^+(f, \zeta_{0, S}, \infty))\geq 1,
\end{equation}
since $t\mapsto N^+(g, \zeta_{0, t}, b)$ is upper semi-continuous for any $g$ and $b$, and
therefore the quantity on the left is both positive and an integer. Note that $G(\zeta_{0, S})\leq G(\zeta_{0, \rho_f})$, or else the same argument again gives a $t<S$ at which the graph of $t\mapsto G(\zeta_{0, t})$ has positive slope, contradicting the construction of $S$.

We now discard any $W_i$ with $b_i\not\in \overline{D(0, S})$, renumbering so that now $W_1, ..., W_k$ remain. Note that for each $i$
\[G(\zeta_{b_i, R_i})-G(\zeta_{0, S})\geq (e-1)\log\rho_f^{-1}-\frac{m(e-1)}{d}C_v>0\]
just as above. We take $S_i$ to be the supremum of the (nonempty) set of $t\in (R_i, S]$ on which  $\log t\to G(\zeta_{b_i, t})$ is decreasing, so that
\begin{equation}\label{eq:nminus}m(1+N^-(f', \zeta_{b_i, S_i}, 0)-N^-(f', \zeta_{b_i, S_i}, \infty))+(1-m)(N^-(f, \zeta_{b_i, S_i}, 0)-N^-(f, \zeta_{b_i, S_i}, \infty))\leq -1\end{equation}
by the lower-semicontinuity of $N^-(g, \zeta_{b_i, t}, b)$ in $t$.

Now set
\[W=\overline{D(0, S)}\setminus (D(b_1, S_1)\cup\cdots\cup D(b_k, S_k))\] and
let $N(g, W, b)$ count solutions to $g(z)=b$ in $W$. 
Exactly as in~\cite{bijl}, we subtract from~\eqref{eq:nplus} the sum of~\eqref{eq:nminus} for $1\leq i\leq k$ to obtain
\begin{equation}\label{eq:kplusone}m((1-k)+N(f', W, 0)) +(1-m)N(f, W, 0)\geq 1+k,\end{equation}
since $f$ and $f'$ have no poles in $W$.
Exactly as in~\cite{bijl}, we now see that $W$ contains a critical point $\zeta$ with $f(\zeta)\neq 0$. In particular, if $W$ contains $A$ critical points that are not roots of $f$ and $B$ \emph{distinct} roots of $f$, then 
\begin{equation*}A+N(f, W, 0)=N(f', W, 0)+B.\end{equation*}
Isolating $N(f', W, 0)$  and using this value in~\eqref{eq:kplusone}, we obtain
\begin{equation}\label{eq:berkobound}m(1-k+A-B)+N(f, W, 0)\geq 1+k\end{equation}

If $k=0$ then $W=\overline{D(0, S)}$. Since $W$ contains no poles of $f$, but $\overline{D(0, S)}\supset D(0, \rho_f)$ contains either a nonzero root or a pole, it follows that in this case $B\geq 2$. Here~\eqref{eq:berkobound} becomes
\[m(1+A-B)+N(f, W, 0)\geq 1,\]
and $A$ must be positive, since $N(f, W, 0)\leq m$.

In general, using $N(f, W, 0)\leq m$ and $B\geq 1$, the inequality~\eqref{eq:berkobound} implies
\[m(1-k+A)\geq 1+k,\]
from which $mA\geq 2$ when $k\geq 1$. Again we have $A>0$.

We have shown that there is critical point $\zeta\in W$ with $f(\zeta)\neq 0$, and so there is a branch point $\beta=f(\zeta)\in U\setminus\{0\}$. But for any $z\in U\setminus \{0\}$ we have
\[\log|f^k(z)|=e^k\log|z|<e^k\log \rho_f\]
for all $k\geq 1$, by the ultrametric inequality.
\end{proof}

The following estimate plays the role of Lemma~\ref{lem:key} in the present case. As in Section~\ref{sec:greens}, take $B_f'$ to be that part of the branch locus whose forward orbit does not contain 0, that is,
\[B_f'=\sum_{P\not\in\mathcal{O}_f^-(0)}\left(e_P(f)-1\right)[f(P)].\]

\begin{lemma}\label{lem:saest}
For all $f$ in the form~\eqref{eq:sanormform},
\begin{multline}\label{eq:saest}
g_f(f_*^kB_f', 0)\\+\deg(B_f')\left(\frac{1}{d-1}\log^+|2(2d-1)!|+\frac{2d-1}{d-1}\log\|f\|-(d-1)r(f)\right)\\\geq \frac{e^k}{d-1}\log\|f\|-e^k\left(C_v+2\log^+|2|+\frac{d-e}{e-1}\log^+|3|\right),
\end{multline}
for all $k\geq 1$.
\end{lemma}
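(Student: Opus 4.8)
The plan is to run the template of Lemma~\ref{lem:key}, substituting Lemma~\ref{lem:sabranch} for the multiplier estimate. First I would apply Lemma~\ref{lem:greensineq} to $f$ in the shape~\eqref{eq:sanormform} (this is the general form allowed there, now with $a_0=\dots=a_{e-1}=0$, $a_e=1$, $b_0=1$ and $b_d=0$): for every branch point $\beta'\in\operatorname{Supp}(B_f')$, so that $f^j(\beta')\neq 0$ for all $j\geq 0$ and $g_f(f^k(\beta'),0)$ is finite,
\[g_f(f^k(\beta'),0)\geq \log^+|f^k(\beta')^{-1}|-\tfrac{1}{d-1}\log^+|2d(2d-1)!|-\tfrac{2d-1}{d-1}\log\|f\|+(d-1)r(f).\]
Summing this over $\operatorname{Supp}(B_f')$ with multiplicities, and using $\deg(B_f')\leq(2d-2)-(e-1)=2d-1-e$ — since $z=0$ is a critical point of local degree $e$ lying in $\mathcal{O}_f^-(0)$ and so contributes nothing to $B_f'$ — reduces the lemma to controlling $\log^+|f^k(\beta)^{-1}|$ for one well-chosen branch point $\beta$. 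The difference between the $\log^+|2(2d-1)!|$ in~\eqref{eq:saest} and the $\log^+|2d(2d-1)!|$ above is exactly $\log^+|d|_v$, a leftover that will be paid for by the $C_v$ term.

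Next I would split on the sign of $\log\rho_f+C_v$. When $\log\rho_f+C_v<0$ (so $\rho_f<1$, as $C_v\geq 0$), Lemma~\ref{lem:sabranch} furnishes a branch point $\beta=f(\zeta)$ with $\zeta\neq 0$ a critical point whose entire forward orbit avoids $0$; hence $\beta\in\operatorname{Supp}(B_f')$, and $\log|f^k(\beta)|<e^k\log\rho_f+\tfrac{e^k}{e-1}\log^+|2^{e-1}3^{d-e}|$. Meanwhile Lemma~\ref{lem:newtonP}, via~\eqref{eq:rootslower} applied to $\prod(1-\alpha_iz)$ and $\prod(1-\beta_jz)$, gives $\tfrac{1}{d-1}\log\|f\|\leq\log^+\|\alpha_1,\dots,\beta_{d-1}\|+\log^+|2|=-\log\rho_f+\log^+|2|$. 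Combining, and using $\log^+|2^{e-1}3^{d-e}|=(e-1)\log^+|2|+(d-e)\log^+|3|$,
\[\log^+|f^k(\beta)^{-1}|\geq-\log|f^k(\beta)|\geq\tfrac{e^k}{d-1}\log\|f\|-e^k\bigl(2\log^+|2|+\tfrac{d-e}{e-1}\log^+|3|\bigr).\]
Feeding this into the summed estimate produces the right-hand side of~\eqref{eq:saest} together with a surplus of $e^kC_v$; the point of the definition of $C_v$ (with $\deg(B_f')\leq 2d-1-e$) is precisely that $e^kC_v\geq\deg(B_f')\cdot\tfrac{\log^+|d|_v}{d-1}$ for all $k\geq 1$ and all $2\leq e\leq d$, so this surplus absorbs the factorial discrepancy. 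When instead $\log\rho_f+C_v\geq 0$, so $\rho_f^{-1}\leq e^{C_v}$, the same Newton‑polygon bound gives $\tfrac{1}{d-1}\log\|f\|\leq C_v+\log^+|2|$, whence the right‑hand side of~\eqref{eq:saest} is at most $-\tfrac{e^k}{e-1}\log^+|2^{e-1}3^{d-e}|$; applying only the trivial ($\log^+\geq 0$) form of the summed base estimate and verifying $\tfrac{e^k}{e-1}\log^+|2^{e-1}3^{d-e}|\geq\deg(B_f')\cdot\tfrac{\log^+|d|_v}{d-1}$ — which again follows from the same size estimates, non‑archimedean places being trivial since $\log^+|d|_v=0$ there — closes this case. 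If $B_f'$ is the zero divisor we are automatically in this second case, by Lemma~\ref{lem:sabranch}, and there is nothing to prove.

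The conceptual content is entirely contained in Lemmas~\ref{lem:greensineq}, \ref{lem:sabranch}, and~\ref{lem:newtonP}; what remains is bookkeeping. The one genuinely delicate step, and the main obstacle, is confirming that the slack engineered into $C_v$ simultaneously covers the factorial mismatch and the second case, uniformly over the full range $2\leq e\leq d$ — that is, checking the two numerical inequalities above involving $\log^+|2^{e-1}3^{d-e}|$, $C_v$, and $\deg(B_f')\leq 2d-1-e$. Establishing these for every $d$ and $e$ is the heart of the matter; everything else is routine summation.
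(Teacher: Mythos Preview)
Your approach is essentially the paper's: apply Lemma~\ref{lem:greensineq} pointwise, split on whether $\log\rho_f+C_v<0$, invoke Lemma~\ref{lem:sabranch} in the attracting case, and use Lemma~\ref{lem:newtonP} to pass from $\rho_f^{-1}$ to $\|f\|$. The paper organizes this slightly differently---first establishing the intermediate bound~\eqref{eq:interest} in terms of $\rho_f$ and only afterward substituting the Newton-polygon estimate---but the ingredients and logic are identical.

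One remark on the factorial discrepancy you flag. The $\log^+|2(2d-1)!|$ in the statement appears to be a typo for $\log^+|2d(2d-1)!|$, matching Lemma~\ref{lem:greensineq} (and propagated into the constant $C_{d,e}$ in Theorem~\ref{th:geom}). The paper's own proof simply asserts that ``by Lemma~\ref{lem:greensineq}, the left-hand side is non-negative,'' which is only literally correct with the factor $2d$ in place; it makes no attempt to absorb a stray $\deg(B_f')\frac{\log^+|d|}{d-1}$ into the $C_v$-slack as you propose. With the intended constant, the two numerical side-inequalities you identify as ``the heart of the matter'' evaporate, and what remains really is routine bookkeeping. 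So your instinct that the argument should be mechanical once Lemmas~\ref{lem:greensineq}, \ref{lem:sabranch}, and~\ref{lem:newtonP} are in hand is correct; the apparent obstacle is an artifact of a misprint rather than a genuine feature of the proof.
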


\begin{proof}
We first remark that we may apply the estimates in Lemma~\ref{lem:greensineq}, which nowhere used the hypothesis that $\lambda\neq 0$ in the normal form~\eqref{eq:normalform}.

We will begin by showing that
\begin{multline}\label{eq:interest}
g_f(f_*^kB_f', 0)\\+\deg(B_f')\left(\frac{1}{d-1}\log^+|2(2d-1)!|+\frac{2d-1}{d-1}\log\|f\|-(d-1)r(f)\right)\\\geq e^k(\log^+\rho_f^{-1}-C_v)-\frac{e^k}{e-1}\log^+|3^{d-e}2^{e-1}|.
\end{multline}
Note that by Lemma~\ref{lem:greensineq}, the left-hand side is non-negative, and so~\eqref{eq:interest} is immediate when $\log^+\rho_f^{-1} -C_v\leq 0$. To treat the other case, we will assume that
\begin{equation}\label{eq:rhobig}\log\rho_f^{-1}=\log^+\rho_f^{-1} > C_v\geq 0.\end{equation}
It follows from Lemma~\ref{lem:sabranch} that there is a branch point $\beta$ satisfying~\eqref{eq:branchboundsa} for all $k\geq 1$. By Lemma~\ref{lem:greensineq}, the left-hand-side of~\eqref{eq:interest} is bounded below by
\[\log^+\left|\frac{1}{f^k(\beta)}\right|\geq \log\left|\frac{1}{f^k(\beta)}\right|\geq e^k\log|\rho_f^{-1}|-\frac{e^k}{e-1}\log^+|3^{d-e}2^{e-1}|.\]
Now~\eqref{eq:interest} follows from the fact that $C_v\geq 0$.

We are left with deducing the estimate in the statement of the lemma from that in~\eqref{eq:interest}.
We may apply \eqref{eq:rootslower} from Lemma~\ref{lem:newtonP} to the numerator and denominator of~\eqref{eq:sanormform}  (or rather to their reciprocal polynomials, whose roots are the $\alpha_i$ and $\beta_j$)  to obtain
\[\log^+\|a_i\|\leq (d-e)\log^+\|\alpha_i\|+(d-e)\log^+|2|\]
and
\[\log^+\|b_j\|\leq (d-1)\log^+\|\beta_j\|+(d-1)\log^+|2|,\]
from which
\[\log\|f\|\leq (d-1)\log\rho_f^{-1}+(d-1)\log^+|2|\]
(recalling that $\log\rho_f^{-1}>0$ from \eqref{eq:rhobig}). Combining this with~\eqref{eq:interest} gives
the intended estimate.
\end{proof}

\subsection{Global estimates}\label{subsec:global}

For the proof of Theorem~\ref{th:geom}, we let $K$ be a number field, and $M_K$ its standard set of absolute values, normalized as in Section~\ref{sec:global}. For each place $v$ we will apply the estimates in Subsection~\ref{subsec:local} over $\CC_v$, the completion of the algebraic closure of the completion of $K$ with respect to $v$, and resulting quantities will acquire a subscript $v$.

\begin{proof}[Proof of Theorem~\ref{th:geom}]
We sum the estimate from Lemma~\ref{lem:saest} over all places. Note that
\[\sum_{v\in M_K}\frac{[K_v:\QQ_v]}{[K:\QQ]}C_v =  \left(\frac{2e+1}{e-1}\right)\log 2 + \frac{d-e}{e-1}\log 3+\frac{d}{e-1}\log\operatorname{lcm}(1, 2, ..., d),
\]
and recall the sums~\eqref{eq:hcritsum}, \eqref{eq:homdsum}, \eqref{eq:Nsum} from the proof of Lemma~\ref{lem:firstbound}.
From these, summing~\eqref{eq:saest} over all places gives
\begin{equation*}d^{k+1}\hcrit(f)\geq  \left(\frac{e^k}{d-1} - \frac{(2d-1)\deg(B'_f)}{d-1}\right)h_{\Hom_d}(f) - E(d, e, k)
\end{equation*}
for 
\begin{multline*} 
E(d, e, k)=\frac{\deg(B'_f)}{d-1}\log(2(2d-1)!)\\+e^k\left(\frac{4e-1}{e-1}\log 2+\frac{2(d-e)}{e-1}\log 3 +\frac{d}{e-1}\log\operatorname{lcm}(1, 2, ..., d)\right).
\end{multline*}
Note also that since $0$ is a critical point of multiplicity $e-1$, $\deg(B'_f)\leq 2d-e-1$.

We now fix $k$ so that \[(2d-1)(2d-e-1)+1\leq e^k< e((2d-1)(2d-e-1)+1),\]
from which we get
\begin{gather*}
\frac{e^k}{d-1} - \frac{(2d-1)\deg(B'_f)}{d-1}\geq \frac{1}{d-1}\\\intertext{and}
d^{k+1}< d^{2+\log_e((2d-1)(2d-e-1)+1)}= d^2((2d-1)(2d-e-1)+1)^{\log_e d}.
\end{gather*}
This value of $k$ gives us
\[\hcrit(f)\geq \frac{1}{(d-1)d^2(4d^2-2(e+2)d+e+2)^{\log d/\log e}}h_{\Hom_d}(f)-C_{d, e},\]
with
\begin{multline}\label{eq:exp}
C_{d, e}=\frac{2d-e-1}{d^2(d-1)(4d^2-2(e+2)d+e+2)^{\log d/\log e}}
\log(2(2d-1)!) \\+\frac{e}{d^2(d-1)(4d^2-2(e+2)d+e+2)^{(\log d/\log e)-1}}\Big(\frac{2(d-e)}{e-1}\log 3+\frac{4e-1}{e-1}\log 2\\ \frac{d}{e-1}\log\operatorname{lcm}(1, 2, ..., d)\Big).
\end{multline}
Note that $\log\operatorname{lcm}(1, ..., d)$ is the second Chebyshev function and by explicit estimates in the direction of the Prime Number Theorem by Rosser and Schoenfeld~\cite{rs}, we have $\log\operatorname{lcm}(1, ..., d)< 1.04d$.
\end{proof}

\begin{remark}\label{rm:ff}
In this section we have been working over a number field, but the estimates in Subsection~\ref{subsec:local} did not depend on the origin of the local field under consideration (although we did require characteristic 0 or $p>d$). If $k$ is an algebraically closed field, and $X/k$ is a normal projective variety, then there is a set of places $M$ on the function field $K=k(X)$ corresponding to irreducible divisors on $X$. If  \[|x|_v=e^{-\ord_v(x)\deg(v)},\] where $\ord_v$ is the order of vanishing along the divisor corresponding to $v$ and $\deg(v)$ is the degree of this divisor  (relative to some fixed ample class on $X$), we set
\[h(P)=\sum_{v\in M}\log\|P\|_v\]
for $P\in \PP^N(K)$.
The points of height $0$ are precisely the points with constant coordinates \cite[\S1.4 and Example~2.4.11]{bg}.

Note that in the case of a non-archimedean absolute value which is not $p$-adic for any $p\leq d$, many of the terms in~\eqref{eq:saest} vanish (for instance, $C_v=0$ and $\log^+|N|=0$ for all $N$). If all of our places satisfy this condition, as is the case for a function field, the sum over all places in the proof of Theorem~\ref{th:geom} simplifies significantly, to
\[\hcrit(f)\geq \frac{1}{(d-1)d^2(4d^2-2(e+2)d+e+2)^{\log d/\log e}}h_{\Hom_d}(f).\]
In particular, if $K$ is the function field of a normal projective variety over an algebraically closed field of characteristic 0 or $p>d$, then any PCF family in the form~\eqref{eq:sanormform} must have constant coefficients. Note Levy has already shown that such an example is conjugate over $\overline{K}$ to a rational function with constant coefficients~\cite[Theorem~1.10]{levysa}, so the only novelty here is that we know that this particular conjugate is already defined over the constant field.
\end{remark}


\section{Quadratic morphisms}\label{sec:quad}

In this section we we turn our attention to morphisms of the form
\begin{equation}\label{eq:m2normalform}f(z)=\frac{\lambda_0 z + z^2}{\lambda_\infty z+1},\end{equation}
with $\lambda_0\lambda_\infty\neq 1$,
and give explicit estimates. Note that, over an algebraically closed field, every quadratic morphism is conjugate to one of this form \cite[Lemma   4.59, p.~190]{ads}, or one of the form $f(z)=z^{-1}+a+z$, the latter family being simpler to handle. For convenience, estimates are relative to the fixed point $z=\infty$ rather than $z=0$, and we note that in the form~\eqref{eq:m2normalform}, the parameters $\lambda_0$ and $\lambda_\infty$ are exactly the multipliers at the fixed points $z=0$ and $z=\infty$.

\subsection{Local estimates}
Let $K$ be an algebraically closed field, complete with respect to some absolute value $|\cdot|$. We work with the lift \begin{equation}\label{eq:lift}F(x, y)=(F_1(x, y), F_2(x, y))=(\lambda_0 xy+x^2, \lambda_\infty xy+y^2),\end{equation}
noting that
\[\operatorname{Res}(F_1, F_2)=1-\lambda_0\lambda_\infty.\]
\begin{lemma}
For all $z$,
\[g_f(z, \infty)\geq \log^+|z|-2\log\|1, \lambda_0, \lambda_\infty\|+r(F)-\log^+|2|.\]
\end{lemma}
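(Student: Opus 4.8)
The plan is to mirror the proof of Lemma~\ref{lem:greensineq}, but now working relative to the fixed point $z=\infty$ rather than $z=0$, and specialized to the explicit quadratic lift~\eqref{eq:lift}. First I would record that our chosen lift $F$ satisfies $F(1,0)=(1,0)$ up to scaling, i.e.\ $F_1(1,0)=1$ and $F_2(1,0)=0$, so that $H_F(1,0)=\lim_n 2^{-n}\log\|F^n(1,0)\|=0$ since the point $[1:0]$ is fixed and its homogeneous coordinates do not grow. Then, from the definition
\[g_f([x:y],[1:0])=-\log|y|+H_F(x,y)+H_F(1,0)-r(F)=-\log|y|+H_F(x,y)-r(F),\]
so for $z=[x:y]$ we have $\log^+|z|=\log^+|x/y|$, and the claimed inequality reduces to a lower bound $H_F(x,y)\geq \log\|x,y\|-2\log\|1,\lambda_0,\lambda_\infty\|-\log^+|2|+(\text{something involving }r(F))$, exactly as in Lemma~\ref{lem:greensineq} but with the explicit constant $\log^+|2d(2d-1)!|$ for $d=2$ replaced by a sharper bound obtained from the explicit B\'ezout relation for this particular quadratic $F$.

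The key step is the B\'ezout computation. Following~\eqref{eq:bezout}, I would solve $G_1F_1+G_2F_2=\Res(F_1,F_2)\,x^{3}$ and a symmetric identity $G_1'F_1+G_2'F_2=\Res(F_1,F_2)\,y^{3}$ with $G_i,G_i'$ linear forms whose coefficients are explicit polynomials in $1,\lambda_0,\lambda_\infty$; since $F_1,F_2$ are given so concretely one can just write these out (or invoke Cramer's rule as in the general lemma, tracking that each coefficient of $G_i$ is a sum of at most $(2d-1)!=6$ monomials with coefficients $\pm1$ in the $a_i,b_j$, but here one can do better by inspection). This yields
\[\log|x|\leq \tfrac13\big(\log\|F_1(x,y),F_2(x,y)\| + \log\|x,y\| + 2\log\|1,\lambda_0,\lambda_\infty\| + c - \log|\Res(F_1,F_2)|\big)\]
for a small explicit constant $c$ coming from the number of monomials, and likewise for $\log|y|$; combining gives
\[\tfrac12\log\|F(x,y)\|\geq \log\|x,y\| - 2\log\|1,\lambda_0,\lambda_\infty\| - c' + r(F),\]
after substituting $r(F)=\tfrac{1}{d(d-1)}\log|\Res|=\tfrac12\log|1-\lambda_0\lambda_\infty|$. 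Iterating this telescoping inequality (the geometric sum $1+\tfrac12+\cdots$ converges to $2$, matching the $\tfrac{1}{d-1}$ factors for $d=2$) and passing to the limit defining $H_F$ gives the bound on $H_F$, and hence $g_f(z,\infty)$.

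The main obstacle is purely bookkeeping: getting the numerical constant $\log^+|2|$ (rather than something larger) requires being careful about how the number of monomials in the B\'ezout cofactors, the telescoping geometric series, and the $\log^+|2|$ from triangle/ultrametric inequalities on sums combine — the general Lemma~\ref{lem:greensineq} would give the cruder constant $\tfrac{1}{d-1}\log^+|2d(2d-1)!| = \log^+|60|$ here, so the gain comes entirely from exploiting the explicit shape of~\eqref{eq:lift} to reduce the monomial count. I expect the cleanest route is to not invoke the general lemma at all but to redo the two-line B\'ezout identity by hand for this quadratic $F$, where $F_1 = x(x+\lambda_0 y)$ and $F_2 = y(y+\lambda_\infty x)$ make the cofactors essentially transparent, and then verify directly that the accumulated error is at most $2\log\|1,\lambda_0,\lambda_\infty\|+\log^+|2|-r(F)$ — once the explicit identity is in hand this is a short check.
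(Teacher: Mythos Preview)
Your approach is exactly what the paper does: it writes down the explicit B\'ezout identities
\[\lambda_\infty^2\, y\,F_1+(-\lambda_\infty x+(1-\lambda_0\lambda_\infty)y)F_2=(1-\lambda_0\lambda_\infty)y^3,\qquad \lambda_0^2\, x\,F_2+(-\lambda_0 y+(1-\lambda_0\lambda_\infty)x)F_1=(1-\lambda_0\lambda_\infty)x^3,\]
derives the single-step bound $\log\|P\|\leq \tfrac12\log\|F(P)\|+\tfrac12\log^+\max\{|1-\lambda_0\lambda_\infty|,|\lambda_0|^2,|\lambda_\infty|^2\}-\tfrac12\log|1-\lambda_0\lambda_\infty|$, iterates (doubling the error), and uses $H_F(1,0)=0$ exactly as you describe. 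Your displayed single-step inequality has $2\log\|1,\lambda_0,\lambda_\infty\|$ where it should be roughly $\log\|1,\lambda_0,\lambda_\infty\|$ (the factor of $2$ appears only \emph{after} iterating), but as you note this is pure bookkeeping and your plan is sound.
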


\begin{proof}
Since
\[\lambda_\infty^2 yF_1(x, y)+(-\lambda_\infty x+(1-\lambda_0\lambda_\infty)y)F_2(x, y)=(1-\lambda_0\lambda_\infty)y^3\]
and
\[\lambda_0^2 xF_2(x, y)+(-\lambda_0 y+(1-\lambda_0\lambda_\infty)x)F_1(x, y)=(1-\lambda_0\lambda_\infty)x^3,\]
we have that (as in the proof of Lemma~\ref{lem:greensineq})
\[\log\|P\|\leq \frac{1}{2}\log\|F(P)\|+\frac{1}{2}\log^+\max\{|1-\lambda_0\lambda_\infty|, |\lambda_0|^2, |\lambda_\infty|^2\}-\frac{1}{2}\log|1-\lambda_0\lambda_\infty|.\]
So
\[\log\|P\|\leq H_F(P)+2\log\|1, \lambda_0, \lambda_\infty\|+\log ^+|2|-2r(F).\]
Since $F(1, 0)=(1, 0)$, $H_F(1, 0)=0$ and
\[g_f(z, \infty)\geq \log^+|z|-2\log\|1, \lambda_0, \lambda_\infty\|+r(F)-\log^+|2|.\]
\end{proof}

\begin{lemma}
Let
\[\epsilon=\begin{cases}
\sqrt{2}-1=0.4142... &\text{if $v$ is archimedean}\\
1/4 & \text{if $v$ is 2-adic}\\
1 & \text{otherwise.}
\end{cases}
\] 
Then for $f$ as in \eqref{eq:m2normalform}, with $0<|\lambda_\infty|<\epsilon$, there is a branch point $\beta$ with
\[|f^k(\beta)|\geq (|\lambda^{-1}_\infty|\epsilon)^k\]
for all $k\geq 1$.
\end{lemma}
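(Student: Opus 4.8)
The plan is to mirror the structure of Lemma~\ref{lem:attraction}, treating the point $z=\infty$ here the way $z=0$ was treated there: the fixed point $z=\infty$ has multiplier $\lambda_\infty$, and we aim to show that when $0<|\lambda_\infty|<\epsilon$ there is a branch point attracted to $\infty$. First I would pass to coordinates at $\infty$, i.e. conjugate by $z\mapsto 1/z$, so that $f$ becomes a map fixing $0$ with multiplier $\lambda_\infty$ and with a second fixed point (the image of $z=0$); explicitly $\tilde f(w)=\dfrac{\lambda_\infty w + w^2}{\lambda_0 w + 1}$, which is again of the form~\eqref{eq:m2normalform} with the roles of $\lambda_0$ and $\lambda_\infty$ swapped. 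Thus the statement reduces to the corresponding statement about attraction to $z=0$ for a quadratic morphism in the normal form. In the archimedean case I would run the Koebe $\tfrac14$-theorem argument of Lemma~\ref{lem:attraction}: on a disk $D(0,r)$ chosen so that $|\tilde f(w)|\le|\lambda_\infty w|\cdot(\text{small factor})<|w|$, one gets $|\tilde f^k(w)|\le(|\lambda_\infty|\cdot\text{const})^k|w|$; then if there were no branch point in the disk, the component $W$ of $\tilde f^{-1}(D(0,r))$ containing $0$ would be simply connected, contain no poles (hence not the whole unit disk after rescaling), so has conformal radius $\le 4r$ by Koebe, while $\tilde f\colon W\to D(0,r)$ witnesses conformal radius $(2|\lambda_\infty|)^{-1}r$ (the local degree at $0$ is $1$ here, so no $e$th root is needed), forcing $|\lambda_\infty|\ge\tfrac18$ — but for $d=2$ we need the sharper constant $\sqrt2-1$, which I expect comes from using the \emph{explicit} geometry of the quadratic $\tilde f$ rather than a crude $(3/2)^{d-1}/(1/2)^{d-1}$ bound, e.g. locating the pole and the finite preimage of $0$ exactly.

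The non-archimedean case should be entirely parallel to the non-archimedean half of Lemma~\ref{lem:attraction} (ultimately Lemma~\ref{lem:sabranch} / \cite[Theorem~4.1]{bijl}): after rescaling so the smallest pole or nonzero root has absolute value $1$, the ultrametric inequality gives $|\tilde f^k(w)|=|\lambda_\infty|^k|w|$ on $D(0,1)$, and the Berkovich-space argument produces a branch point in $D(0,1)$ provided $0<|\lambda_\infty|<|\deg_{\zeta,\vec w}\tilde f|^{\,d}$ for all $\zeta,\vec w$; since directional multiplicities are integers in $\{1,\dots,d\}=\{1,2\}$, the binding constraint is $|\lambda_\infty|<|2|^2$, which for the $2$-adic valuation gives $\tfrac14$ and is trivial otherwise — matching the stated $\epsilon$. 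Translating back through the conjugacy $w=1/z$ turns the estimate $|\tilde f^k(w)|\ge(|\lambda_\infty^{-1}|\epsilon)^{-k}\cdot(\cdots)$ on small $|w|$ into $|f^k(\beta)|\ge(|\lambda_\infty^{-1}|\epsilon)^k$ for the corresponding branch point $\beta$ near $\infty$; here I would be careful that branch points are preserved under the M\"obius conjugacy and that the rescaling used to normalize the roots does not affect the two sides of the inequality (as noted in the proof of Lemma~\ref{lem:attraction}, both sides are conjugacy-invariant under $z\mapsto\xi z$ in an appropriate sense, though in the present inhomogeneous normal form one must track the scaling as in Section~\ref{sec:sa}).

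The main obstacle I anticipate is getting the \emph{sharp} archimedean constant $\sqrt2-1$ rather than the generic $\tfrac18$: this requires exploiting that $f$ is genuinely quadratic. Concretely, in the coordinate at $0$ the map $\tilde f(w)=w(\lambda_\infty+w)/(1+\lambda_0 w)$ has a single finite pole at $w=-1/\lambda_0$ and a single nonzero preimage of $0$ at $w=-\lambda_\infty$; one of these has absolute value $\le\max\{|\lambda_\infty|,|\lambda_0|^{-1}\}$, and after rescaling so that value is $1$ the disk $W$ genuinely cannot contain the closed unit disk, but one should optimize the radius $r$ of the target disk (not just take $r=1/2$) so that the inequality $|\tilde f(w)|<|w|$ holds on $D(0,r)$ for all $|\lambda_\infty|<\sqrt2-1$ while keeping the Koebe bound $r/(2|\lambda_\infty|)\le 4r$ — i.e. solving $|\lambda_\infty|\ge r/(8)$-type inequalities against the triangle-inequality estimate $|\lambda_\infty+w|\le|\lambda_\infty|+r$, $|1+\lambda_0 w|\ge 1-r$. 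Balancing these should yield the threshold $\sqrt2-1$; verifying that the resulting linear-attraction rate $|\lambda_\infty^{-1}|\epsilon$ (rather than a worse constant) is what actually drops out of the iteration $|\tilde f^k(w)|\le(\text{rate})^k|w|$ is the last bookkeeping step, and I would double-check it against the $2$-adic case where the same symbol $\epsilon$ must simultaneously work.
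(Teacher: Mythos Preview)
Your plan takes a genuinely different route from the paper's proof. You propose to conjugate the fixed point at $\infty$ to $0$ and then recycle the Koebe-$\tfrac14$ / Berkovich machinery of Lemma~\ref{lem:attraction}. The paper instead works directly at $\infty$ and \emph{computes the branch points explicitly}: writing $W(x,y)=\lambda_\infty x^2+2xy+\lambda_0 y^2$ for the Wronskian and taking the resultant $\Res(YF_1-XF_2,\,W)$ as forms in $(x,y)$, one obtains the quadratic $\lambda_\infty^2 X^2+2(2-\lambda_0\lambda_\infty)XY+\lambda_0^2 Y^2$ whose roots are the branch points, hence
\[
\beta_1+\beta_2=\frac{2(\lambda_0\lambda_\infty-2)}{\lambda_\infty^2},\qquad \beta_1\beta_2=\Big(\frac{\lambda_0}{\lambda_\infty}\Big)^2.
\]
The archimedean argument is then pure triangle-inequality algebra: for $|z|\geq(\sqrt2+1)\max\{|\lambda_0|,|\lambda_\infty^{-1}|\}$ one checks $|f(z)|\geq |z|\,|\lambda_\infty^{-1}|(\sqrt2-1)$ directly, and the assumption that \emph{both} $|\beta_i|<(\sqrt2+1)\max\{|\lambda_0|,|\lambda_\infty^{-1}|\}$ is fed into the two displayed identities to force $|\lambda_\infty|\geq\sqrt2-1$. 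The non-archimedean case is handled the same way (no Berkovich space), with the product formula forcing $|\lambda_\infty|\geq|4|$. So the paper's proof is entirely elementary once one has the resultant trick, and no conformal-radius or $\deg_{\zeta,\vec w}$ arguments appear.

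Your non-archimedean plan is fine in outline and yields the same constants. The real gap is archimedean: the ``optimize $r$ in Koebe'' step, as you sketch it, does not obviously reach $\sqrt2-1$. If you rescale so that the nearest pole/root of $\tilde f$ sits on the unit circle and argue exactly as in Lemma~\ref{lem:attraction}, the conformal-radius comparison gives only $|\lambda_\infty|\geq r/4$ with $r\leq 1$, topping out at $1/4<\sqrt2-1$. One can do better by noticing that $W$ also omits the nonzero root $w=-\lambda_\infty$ of $\tilde f$ (so Koebe gives conformal radius $\leq 4|\lambda_\infty|$, hence $|\lambda_\infty|^2\geq r/4$), but then you must simultaneously arrange the contraction estimate $|\tilde f(w)|\leq(\sqrt2+1)|\lambda_\infty||w|$ on $D(0,r)$, and your sketch does not show these constraints are compatible at the threshold $\sqrt2-1$. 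The paper sidesteps all of this by computing $\beta_1+\beta_2$ and $\beta_1\beta_2$ exactly; that is the missing idea in your plan.
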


\begin{proof}
We start with the case of $v$ archimedean. Note that for any $z$ with $|z|\geq (\sqrt{2}+1)\max\{|\lambda_0|, |\lambda_\infty^{-1}|\}$ we have
\[|f(z)|=|z|\cdot\left|\frac{\frac{\lambda_0}{z}+1}{\frac{1}{z}+\lambda_\infty}\right|\geq |z|\cdot|\lambda_\infty^{-1}|\cdot\frac{\sqrt{2}}{2+\sqrt{2}}>|z|,\]
and so $|f^k(z)|\geq |z|(|\lambda_\infty^{-1}|\epsilon)^k\geq 6(|\lambda_\infty^{-1}|\epsilon)^k$ for all $k$ by induction. It remains to show that some branch point satisfies this hypothesis. Although the branch points of $f$ are not rational functions in $\lambda_0$ and $\lambda_\infty$, symmetric polynomials in the branch points must be, and it is useful to have explicit expressions for these.

Let
\[W(x, y)=\lambda_\infty x^2+2xy+\lambda_0y^2,\]
so that $W$ is a homogenous form vanishing at the critical points of $f$ in $\PP^1$ (by the quotient rule), and recall the forms $F_1$ and $F_2$ from \eqref{eq:lift}. Note that since the resultant of two homogeneous forms vanishes just in case the forms have a common zero, the resultant
\[B(X, Y)=\Res(YF_1(x, y)-XF_2(x, y), W(x, y))\]
(as forms in the variables $x, y$ over the ring $\ZZ[X, Y, \lambda_0, \lambda_\infty]$) vanishes precisely at (some homogeneous coordinates for) the images under $f$ of the roots of $W$, i.e., at the branch points of $f$. We can compute this resultant explicitly as the determinant of a $4\times 4$ matrix, and see that
\[B(X, Y)=\lambda_\infty^2X+2(2-\lambda_0\lambda_\infty)XY+\lambda_0^2Y^2.\]
Assuming that $\lambda_\infty\neq 0$ and then dehomogenizing, we see that the branch points $\beta_1$ and $\beta_2$ of $f$ satisfy
\begin{gather}
\beta_1+\beta_2=\frac{2(\lambda_0\lambda_\infty-2)}{\lambda_\infty^2}\label{eq:sumofbeta}\\
\beta_1\beta_2=\left(\frac{\lambda_0}{\lambda_\infty}\right)^2.\label{eq:prodofbeta}
\end{gather}

Suppose,  contrary to what we are trying to prove, that we have $|\beta_1|, |\beta_2|\leq (\sqrt{2}+1)\max\{|\lambda_0|, |\lambda_\infty^{-1}|\}$, and for now suppose that $\lambda_0\neq 0$. 
Then~\eqref{eq:prodofbeta} gives
\[\left|\frac{\lambda_0}{\lambda_\infty}\right|^2=|\beta_1\beta_2|\leq (\sqrt{2}+1)^2\max\{|\lambda_0|, |\lambda_\infty^{-1}|\}^2,\]
and hence $1\leq (\sqrt{2}+1)\max\{|\lambda_\infty|, |\lambda_0^{-1}|\}$. Note that $|\lambda_\infty|<\sqrt{2}-1$ now implies $|\lambda_0^{-1}|>|\lambda_\infty|$. It then follows from~\eqref{eq:sumofbeta} that 
\[\left|\frac{2(\lambda_0\lambda_\infty-2)}{\lambda_\infty^2}\right|=|\beta_1+\beta_2|\leq 4\max\{|\lambda_0|, |\lambda_\infty^{-1}|\}= 4|\lambda_\infty^{-1}|\]
whereupon (recalling again that $|\lambda_\infty|<\sqrt{2}-1$ and $|\lambda_0\lambda_\infty|<1$)
\[1\leq 2-|\lambda_0\lambda_\infty|\leq |\lambda_0\lambda_\infty-2|\leq 2|\lambda_\infty|<2(\sqrt{2}-1)=0.8284...<1,\]
a contradiction. On the other hand, if $\lambda_0=0$ then a contradiction follows directly from~\eqref{eq:sumofbeta}.

For a non-archimedean $v$, note that
\[|f(z)|=|\lambda_\infty^{-1}z|\]
as soon as $|z|>\max\{|\lambda_0|, |\lambda_\infty^{-1}|\}$.
Suppose both branch points fail this, so that
\[\left|\frac{\lambda_0}{\lambda_\infty}\right|^2=|\beta_1\beta_2|\leq\max\{|\lambda_0|, |\lambda_{\infty}^{-1}|\}^2,\]
and hence
\[1\leq\max\{|\lambda_\infty|, |\lambda_0^{-1}|\}^2.\]
Since $|\lambda_{\infty}|<1$, we must have $|\lambda_0|\leq 1$,
 from which we conclude that
\[|\lambda_0\lambda_\infty|\leq |\lambda_\infty|<\epsilon\leq |2|,\]
and therefore $|\lambda_0\lambda_\infty-2|=|2|$.
It follows that
\[\frac{|4|}{|\lambda_\infty^2|}= \left|\frac{2(\lambda_0\lambda_\infty-2)}{\lambda_\infty^2}\right|=|\beta_1+\beta_2|\leq |\lambda_\infty^{-1}|,\]
and hence $|\lambda_\infty|\geq |4|$. This contradicts our choice of $\epsilon$.
\end{proof}

As before, let $B_f'$ be that part of the branch locus whose support does not contain iterated preimages of $\infty$.
\begin{lemma}\label{lem:quadest}
For every $k\geq 1$,
\begin{multline*}g_f(f_*^kB_f', \infty)\geq k\log^+|\lambda_\infty^{-1}|+k\log \epsilon\\-\deg(B_f')\left(2\log\|1, \lambda_0, \lambda_\infty\|+\log^+|2|-r(F)\right).\end{multline*}
\end{lemma}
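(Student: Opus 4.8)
The plan is to mirror the structure of the proof of Lemma~\ref{lem:key}, using the two preceding lemmas of this section in place of Lemmas~\ref{lem:greensineq} and~\ref{lem:attraction}. First I would split into the two cases according to whether $|\lambda_\infty|$ is small. Suppose first that $0<|\lambda_\infty|<\epsilon$. By the previous lemma there is a branch point $\beta$ with $|f^k(\beta)|\geq (|\lambda_\infty^{-1}|\epsilon)^k$ for all $k\geq 1$; in particular $f^k(\beta)\neq\infty$, so $\beta\in\Supp(B_f')$ since its entire forward orbit avoids $\infty$. For this branch point I apply the lower bound $g_f(f^k(\beta),\infty)\geq \log^+|f^k(\beta)|-2\log\|1,\lambda_0,\lambda_\infty\|+r(F)-\log^+|2|$ from the first lemma of this section, together with $\log^+|f^k(\beta)|\geq \log|f^k(\beta)|\geq k\log^+|\lambda_\infty^{-1}|+k\log\epsilon$ (here using $\log|\lambda_\infty^{-1}|\geq\log^+|\lambda_\infty^{-1}|$, which holds because $|\lambda_\infty|<\epsilon\leq 1$). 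For every other branch point in $\Supp(B_f')$ I apply the same lower bound with the trivial estimate $\log^+|f^k(\cdot)|\geq 0$. Summing over the divisor $B_f'$, whose degree is $\deg(B_f')$, gives exactly the claimed inequality in this case, since the $-2\log\|1,\lambda_0,\lambda_\infty\|+r(F)-\log^+|2|$ term is picked up once per point counted with multiplicity.

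For the remaining case $|\lambda_\infty|\geq\epsilon$, I apply the first lemma of the section to every branch point with the trivial estimate $\log^+|f^k(\cdot)|\geq 0$, obtaining
\[g_f(f_*^kB_f',\infty)\geq -\deg(B_f')\left(2\log\|1,\lambda_0,\lambda_\infty\|+\log^+|2|-r(F)\right).\]
Comparing with the desired bound, it suffices to check $0\geq k\log^+|\lambda_\infty^{-1}|+k\log\epsilon$. But $|\lambda_\infty|\geq\epsilon$ forces $|\lambda_\infty^{-1}|\leq\epsilon^{-1}$, hence $\log^+|\lambda_\infty^{-1}|+\log\epsilon\leq\log^+(\epsilon^{-1})+\log\epsilon=0$ (using $\epsilon\leq 1$, so $\log^+(\epsilon^{-1})=\log(\epsilon^{-1})=-\log\epsilon$), and the claim follows after multiplying by $k>0$.

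One point to handle carefully, and the main technical wrinkle, is the degenerate case $\lambda_\infty=0$: then the hypothesis $0<|\lambda_\infty|<\epsilon$ of the branch-point lemma is not met, but also $\log^+|\lambda_\infty^{-1}|$ is not defined. In this situation $z=\infty$ is a super-attracting fixed point of local degree $2$, so it is one of the two critical points of $f$; the other critical point then maps into $B_f'$ (or $B_f'=0$ if its forward orbit meets $\infty$). I would treat this either by a direct argument — the right-hand side of the claimed inequality should be interpreted as $+\infty$ times $k$, forcing $g_f(f_*^kB_f',\infty)=+\infty$, which indeed holds because $\infty$ then lies in the forward critical orbit and $g_f(\cdot,\infty)$ has a logarithmic pole there — or, more cleanly, by simply restricting the statement to $\lambda_\infty\neq 0$, consistent with the standing assumption $\lambda_0\lambda_\infty\neq 1$ and with how this lemma will be summed over places (where $\lambda_\infty=0$ contributes a divisor relation rather than a finite height). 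Apart from this bookkeeping, the proof is a routine repackaging of the two preceding lemmas, exactly parallel to Lemma~\ref{lem:key}.
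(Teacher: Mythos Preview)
Your proof is correct and is exactly the approach the paper intends: its own proof reads, in full, ``Combine the previous two lemmas,'' and your case split ($0<|\lambda_\infty|<\epsilon$ versus $|\lambda_\infty|\geq\epsilon$) together with the summation over $B_f'$ is precisely how that combination is carried out, paralleling Lemma~\ref{lem:key}. One small correction to your $\lambda_\infty=0$ discussion: in that case $\infty$ is the image of the critical point $\infty$, which is \emph{excluded} from $B_f'$, so $g_f(f_*^kB_f',\infty)$ is finite and your first suggested interpretation fails; your second option, restricting to $\lambda_\infty\neq 0$, is the right one (and is implicitly what the paper does, handling $\lambda_\infty=0$ trivially at the global stage since $h(0)=0$).
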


\begin{proof}
Combine the previous two lemmas.
\end{proof}

\subsection{Global estimates}
Now let $K$ be a number field, with its standard set of absolute values $M_K$. Quantities from the previous subsection gain a subscript.

\begin{proof}[Proof of Theorem~\ref{th:quad}]
Note that
\begin{gather*}
\sum_{v\in M_K}\frac{[K_v:\QQ_v]}{[K:\QQ]}\epsilon_v = -2\log 2 -\log (\sqrt{2}+1)\\
\intertext{and}
\sum_{v\in M_K}\frac{[K_v:\QQ_v]}{[K:\QQ]}\log\|1, \lambda_0, \lambda_\infty\|_v = h_{\PP^2}(\lambda_0, \lambda_\infty).
\end{gather*}
From the proof of Lemma~\ref{lem:firstbound} note also~\eqref{eq:hcritsum} with $d=2$, \eqref{eq:lambdasum} with $\lambda=\lambda_\infty$, \eqref{eq:Nsum} with $N=2$, and~\eqref{eq:ressum}. Combining these, summing the estimate from Lemma~\ref{lem:quadest} over all places, and using $\deg(B_f')\leq 2$, we have
 for any $k\geq 1$ that
\begin{equation}\label{eq:quad}
2^{k+1}\hcrit(f)\geq kh(\lambda_\infty) - k(2\log 2+\log (\sqrt{2}+1))-4h_{\PP^2}(\lambda_0, \lambda_\infty)-2\log 2.
\end{equation}
Note that conjugating $f$ by $1/z$ preserves the normal form~\eqref{eq:m2normalform}, but swaps $\lambda_0$ and $\lambda_\infty$. Since $\hcrit$ is conjugacy-invariant, we also have \eqref{eq:quad} with $\lambda_0$ and $\lambda_\infty$ swapped. Adding that inequality to \eqref{eq:quad} and  using the fact that
\[h_{\PP^2}(\lambda_0, \lambda_\infty)\leq h(\lambda_0)+h(\lambda_\infty),\]
we then have
\begin{equation}\label{eq:quadbound}2^{k+2}\hcrit(f)\geq (k-8)h_{\PP^2}(\lambda_0, \lambda_\infty)-4\log 2 - 2k(2\log 2+\log (\sqrt{2}+1)).\end{equation}
The estimate in Theorem~\ref{th:quad} is obtained by taking $k=10$ in~\eqref{eq:quadbound}.
\end{proof}

\begin{proof}[Proof of Corollary~\ref{cor:quad}]
Using~\eqref{eq:quadbound} to bound $h_{\PP^2}(\lambda_{0}, \lambda_\infty)$ from above in~\eqref{eq:quad} we have, for any positive integer $k\neq 8$,
\begin{equation}\label{eq:kbound}k(h(\lambda_\infty)-\log 12)\leq 2^{k+1}\left(1+\frac{2}{k-8}\right)\hcrit(f)+\frac{6k-12}{k-8}\log 2+\frac{2k}{k-8}\log (\sqrt{2}+1).\end{equation}
Now, if there exist, for a fixed $\lambda\neq 0$, quadratic morphisms $f$ of arbitrarily small critical height on $\operatorname{Per}_1(\lambda)$, we may write these in the normal form~\eqref{eq:m2normalform} with the given fixed point at $z=\infty$ (and hence $\lambda_\infty=\lambda$). Since $\hcrit(f)$ gets arbitrarily small for these examples, we conclude from~\eqref{eq:kbound} that
\[k(h(\lambda)-\log 12)\leq \frac{6k-12}{k-8}\log 2+\frac{2k}{k-8}\log (\sqrt{2}+1)<45\]
for each $k\geq 9$. Taking $k\to\infty$, we see a contradiction unless $h(\lambda)\leq \log 12$.
\end{proof}

\begin{remark}
Note that we neglected to handle the case $f(z)=z+a+z^{-1}$, but in fact it is not hard to prove the restriction of Theorem~\ref{th:main} to any one-parameter family. In general, if $U$ is a curve, and $f\in \Hom_d(U)$, we have by \cite[Theorem~4.1]{callsilv} that
\[\hat{h}_{f_t}(P_t)=(\hat{h}_f(P)+o(1))h_U(t)+O(1)\]
for any $P\in \PP^1(U)$, where $h_U$ is an degree-one Weil height on $U$, where $o(1)\to 0$ as $h_U(t)\to\infty$,  and where $\hat{h}_f(P)$ is the canonical height computed on the generic fibre. If all of the critical points are in $\PP^1(U)$, then we have
\begin{equation}\label{eq:family}\hcrit(f_t)=(\hcrit(f)+o(1))h_U(t)+O(1)\end{equation}
by summing over them. It then follows from a theorem of Baker~\cite{baker2} that $\hcrit(f_t)\asymp h_U(t)$ as long as the generic fibre $f$ is not isotrivial and not PCF. By Thurston Rigidity, the the family can only be PCF if it is  Latt\`{e}s or isotrivial. 

But even  if the critical points are not $U$-rational, they are $V$-rational for some finite cover $\varphi:V\to U$, and by functoriality of heights $\deg(\varphi)h_V=h_U\circ\varphi+O(1)$, giving again~\eqref{eq:family}.

Finally, if $f\in \Hom_d(U)$ is a non-constant, non-Latt\`{e}s family, then composing with the map $\Hom_d\to\M_d$ gives a map $[f]:U\to \M_d$ with image $\Gamma$ and finite fibres. We have, if $L$ is the ample class relative to which we are computing heights,
\[h_{\M_d}([f]_t)=h_{\Gamma, L\mid_\Gamma}([f]_t)= h_{U, F^*L\mid_\Gamma}(t)+O(1)=Ch_U(t)+O(1).\]
\end{remark}


\begin{thebibliography}{9}

\bibitem{baker2} M.~Baker. A finiteness theorem for canonical heights attached to rational maps over function fields, \emph{J.\ Reine Angew.\ Math} \textbf{626} (2009), pp.~205--233.

\bibitem{baker} M.~Baker and R.~Rumely. Equidistribution of Small Points, Rational Dynamics, and Potential Theory, \emph{Ann.\ Inst.\ Fourier (Grenoble)} \textbf{56} no.~3 (2006), pp.~625--688.

\bibitem{br} M.~Baker and R.~Rumely. \emph{Potential Theory and Dynamics on the Berkovich Projective Line}, AMS Mathematical Surveys and Monographs 159 (2010).

\bibitem{bijl} R.~L.~Benedetto, P.~Ingram, R.~Jones, and A.~Levy. Attracting cycles in $p$-adic dynamics and height bounds for post-critically finite maps, \emph{Duke Math.\ J.} \textbf{163} no.~13 (2014), pp.~2325-2356.

\bibitem{bg} E.~Bombieri and W.~Gubler. \emph{Heights in Diophantine geometry.}, volume~4 of
\emph{New Mathematical Monographs}, Cambridge University Press, Cambridge, 2006.

\bibitem{callsilv} G.~S.~Call and J.~H.~Silverman, Canonical heights on varieties with morphisms, \emph{Compositio Math.} \textbf{89} (1993), pp.~163--205.

\bibitem{demarco} L.~DeMarco, X.~Wang, and H.~Ye.  Bifurcation measures and quadratic rational maps, \emph{Proc.\ London Math.\ Soc.} \textbf{111} no.~1 (2015), pp.~149--180.

\bibitem{fatou1} P.~Fatou. Sur les \'{e}quations fonctionelles, \emph{Bull.\ de la S.\ M.\ F.} \textbf{47} (1919), pp.~161--271.

\bibitem{fatou2} P.~Fatou. Sur les \'{e}quations fonctionelles, \emph{Bull.\ de la S.\ M.\ F.} \textbf{48} (1920), pp.~208--314.

\bibitem{fujiwara} M.~Fujiwara.
\"{U}ber die obere Schranke des absoluten Betrages der Wurzeln einer algebraischen Gleichung. 
\emph{T\^{o}hoku Math. J.} \textbf{10}, 167-171 (1916).

\bibitem{hs} L.C.~Hsia and J.H.~Silverman, A quantitative estimate for quasi-integral points in orbits. \emph{Pacific J.\ Math.} \textbf{249} (2011), no.~2, pp.~321--342.

\bibitem{pcfpoly} P.~Ingram. A finiteness result for post-critically finite polynomials, \emph{Int.\ Math.\ Res.\ Not.} (2012) Issue~3, pp.~524--543.





\bibitem{levysa} A.~Levy. An Algebraic Proof of Thurston’s Rigidity for Maps With a Superattracting Cycle. arXiv:1201.1969v2

\bibitem{milnor2} J.~Milnor. Remarks on iterated cubic maps, \emph{Experiment.\ Math.} \textbf{1} (1992)  Issue~1, pp.~5--24.

\bibitem{milnor} J.~Milnor. Geometry and dynamics of quadratic rational maps, \emph{Experiment.\ Math.} \textbf{2} (1993)  Issue~1, pp.~37--83.



\bibitem{mcmullen} C.~McMullen. Families of rational maps and iterative root-finding algorithms, \emph{Ann.\ of Math.} \textbf{125} (1987), pp.~467--493.

\bibitem{rs} J.~B.~Rosser and L.~Schoenfeld, Approximate formulas for some functions of prime numbers, \emph{Illinois Journal Math.} \textbf{6} (1962), pp.~64--94.


\bibitem{ads} J.~H.~Silverman. \emph{The Arithmetic of Dynamical Systems}, volume~241 of \emph{Graduate Texts in Mathematics}. Springer, 2007.

 \bibitem{aec} J.~H.~Silverman, \emph{The arithmetic of elliptic curves 2nd ed.}.  Volume~106 of \emph{Graduate Texts in Mathematics}.  Springer-Verlag, New York, 2009.


\bibitem{jhsequi} J.~H.~Silverman.
Height estimates for equidimensional dominant rational maps. 
\emph{J.\ Ramanujan Math.\ Soc.} \textbf{26} (2011), no.~2, pp.~145-–163. 

\bibitem{barbados} J.~H.~Silverman. \emph{Moduli Spaces and Arithmetic Dynamics}, volume~30 of \emph{CRM Monograph Series}. AMS, 2012.

\bibitem{vojta} P.~Vojta. Diophantine Approximation and Nevanlinna Theory, In P.~Corvaja and C.~Gasbarri (eds.), \emph{Arithmetic Geometry: Lectures given at the C.I.M.E. Summer School held in Cetraro, Italy, September 10-15, 2007}, Springer,  2010.

\end{thebibliography}
\end{document}